\numberwithin{equation}{section}
\newenvironment{proof2.1}{\medskip\noindent{\bf Proof of the Theorem 2.1:}\enspace}{\hfill \qed \newline \medskip}
\newenvironment{proof2.2}{\medskip\noindent{\bf Proof of the Theorem 2.2:}\enspace}{\hfill \qed \newline \medskip}
\newtheorem{theorem}{\color{black}\indent Theorem}[section]
\newtheorem{lemma}{\color{black}\indent Lemma}[section]
\newtheorem{assumption}{\color{black}\indent Assumption}[section]
\newtheorem{proposition}{\color{black}\indent Proposition}[section]
\newtheorem{definition}{\color{black}\indent Definition}[section]
\newtheorem{remark}{\color{black}\indent Remark}[section]
\newtheorem*{acknowledgment}{Acknowledgment}
\DeclareRobustCommand{\rchi}{{\mathpalette\irchi\relax}}
\newcommand{\irchi}[2]{\raisebox{\depth}{$#1\chi$}}
\begin{document}

\title{H\"older and Harnack estimates for\\integro-differential operators with kernels of measure}

\author[1,2]{Jingya Chen\thanks{\hspace*{4mm}{Email addresses: 
 jingya21@mails.jlu.edu.cn}\newline \hspace*{4mm}{{\it Mathematics Subject Classiffcation (2020)}: 35B45, 35B65, 35R11, 47G20}\newline\hspace*{4mm}{{\it Key words and phrases}: nonlocal, H\"older regularity, Harnack estimate}}}
\affil[1]{School of Mathematics, Jilin University, Qianjin Str. 2699, Changchun, Jilin Province 130012, China}
\affil[2]{Fachbereich Mathematik, Universit\"at Salzburg, Hellbrunner Str. 34, 5020 Salzburg, Austria}
\renewcommand*{\Affilfont}{\small\it}
\date{} \maketitle
\vspace{-20pt}
\begin{abstract}
We establish H\"older and Harnack estimates for weak solutions of a class of elliptic nonlocal equations that are modeled on integro-differential operators with kernels of measure. The approach is of De Giorgi-type, as developed by DiBenedetto, Gianazza and Vespri in a local setting. Our results generalize the work by Dyda and Kassmann (2020).
\end{abstract}
\section{Introduction}
The aim of this work is to prove regularity estimates for local weak solutions of elliptic equations of type
\begin{equation}\label{equa}
\mathscr{L}u\equiv\text{p.v.}\int_{\mathbb{R}^N}\big(u(x)-u(y)\big)\mu(x,dy)=0\quad\text{in}~E.
\end{equation}
Here, $E\subset\mathbb{R}^N~(N\geqslant 2)$ is a bounded domain, the symbol \text{p.v.} means ``in the principal value sense", whereas $(\mu(x,\cdot))_{x\in\mathbb{R}^N}$ is a family of measures on $\mathbb{R}^N$ whose properties will be specified next.
\par
Given a  general family of measures that satisfy basic structural conditions such as symmetry, a Poincar\'e-type inequality and a proper decay estimate, locally bounded, local weak solutions of (\ref{equa}) are locally H\"older continuous. Under the same assumptions on the measures, we obtain a weak Harnack inequality for local weak super-solutions of (\ref{equa}) as well. If $\mu$ is absolutely continuous with respect to the Lebesgue measure and the density function satisfies a certain ``uniform jump and symmetry" condition, then we conclude a boundedness estimate for local weak solutions, that will lead to a full Harnack inequality.
\par
Our concept of solutions involves quadratic forms related to $\mu(x,dy)$. The following two subsections are devoted to the set-up, assumptions and the statements of our main results.
\subsection{Definitions and Notation}
Throughout this note, we will use $B_r(x_0)$ to denote the ball of radius $r$ and center $x_0$ in $\mathbb{R}^N$. When $x_0$ is the origin, we will omit the center $x_0$ from the symbol for simplicity.
\par
Let $\mu=(\mu(x,\cdot))_{x\in\mathbb{R}^N}$ be a family of Borel measures on $\mathbb{R}^N$. We say $\Omega$ is $\mu$-measurable if it is $\mu(x,dy)$-measurable for all $x\in\mathbb{R}^N$. They are symmetric in the following sense.
\begin{assumption}
For any $\mu$-measurable sets $\Omega_1$ and $\Omega_2$ with $\Omega_1\cap\Omega_2=\varnothing$, we have that $x\mapsto\int_{\Omega_i}\mu(x,dy),~i=1,2$ are Lebesgue measurable and
\begin{equation}\label{S}
\int_{\Omega_1}\int_{\Omega_2}\mu(x,dy)dx=\int_{\Omega_2}\int_{\Omega_1}\mu(x,dy)dx.\tag{S}
\end{equation}
\end{assumption}
For a given family $\mu$ we consider the following quadratic forms on $L^2(D)\times L^2(D)$, where $D\subseteq\mathbb{R}^N$ is some open set:
\[\mathcal{E}_D^\mu(u,\varphi)=\int_D\int_D(u(y)-u(x))(\varphi(y)-\varphi(x))\,\mu(x,dy)dx.\]

\begin{definition}[function spaces]\upshape
We say $u\in H_{\text{loc}}^\mu(E)$ if $\mathcal{E}_D^\mu(u,u)$ is finite for all $\bar{D}\subseteq E$.
Let $H^\mu(\mathbb{R}^N)$ be the vector space of functions $u\in L^2(\mathbb{R}^N)$ such that $\mathcal{E}^\mu(u,u)=\mathcal{E}_{\mathbb{R}^N}^\mu(u,u)$ is finite.
\end{definition}

\par

\begin{definition}[solution concept]\upshape
A Lebesgue measurable function $u:\mathbb{R}^N\rightarrow\mathbb{R}$ is a local weak sub(super)-solution of (\ref{equa}), if it satisfies $u\in H_{\text{loc}}^\mu(E)$,
\[\sup_{x\in B_{\frac78R}(x_0)}\int_{\mathbb{R}^N\backslash B_R(x_0)}|u(y)|\,\mu(x,dy)<\infty,\quad\text{for any}~B_R(x_0)\subset E\]
and
\begin{equation}\label{def-solution}
\mathcal{E}^{\mu}(u,\varphi)\leqslant(\geqslant) 0
\end{equation}
for every non-negative $\varphi\in H^\mu(\mathbb{R}^N)$ with supp\,$u\subseteq E$.
A function $u$ that is both a local weak sub-solution and a local weak super-solution of (\ref{equa}) is a local weak solution.
\end{definition}
Note that $\mathcal{E}^{\mu}(u,\varphi)$ is finite as can be seen from the decomposition
\begin{align*}
\mathcal{E}^{\mu}(u,\varphi)=&\int_{D}\int_{D}(u(y)-u(x))(\varphi(y)-\varphi(x))\,\mu(x,dy)dx\\
&+2\int_{D}\int_{D^c}(u(y)-u(x))(\varphi(y)-\varphi(x))\,\mu(x,dy)dx,
\end{align*}
where we used (S), and from the function spaces $u$ and $\varphi$ belong to.
\par
A central object of our study is the \textit{nonlocal tail} of a sub(super)-solution $u$ given by
\begin{equation*}
{\rm Tail}(u;x_0,r_1,r_2):=r_2^\alpha\underset{x\in B_{r_1}(x_0)}{\text{sup}}\int_{\mathbb{R}^N\backslash B_{r_2}(x_0)}u(y)\,\mu(x,dy),
\end{equation*}
where $x_0\in\mathbb{R}^N$ and $B_{r_1}(x_0)\subset B_{r_2}(x_0)\subset E$. When $x_0=0$, we omit it from the notation. According to the notion of solutions, the tail is finite if $r_1\leqslant\frac78r_2$.
\subsection{Assumptions and Main results}
In this subsection, we list the other assumptions on $\mu$. Throughout this paper, we take $\alpha\in(0,2)$. Firstly, a Poincar\'e-type inequality holds true.
\begin{assumption}\label{AA}
    Let $\mathfrak{C}_1\geqslant 1$. For every ball $B_{\rho}(x_0)\subset E$ and every $v\in H_{\mathrm{loc}}^{\mu}(E)$,
    \begin{equation}\label{A-poin}
        \int_{B_\rho(x_0)}(v(x)-[v]_{B_\rho(x_0)})^2dx\leqslant \mathfrak{C}_1\rho^\alpha\mathcal{E}_{B_\rho(x_0)}^{\mu}(v,v),\tag{H$_1$}
    \end{equation}
    where $$[v]_{B_\rho(x_0)}=\fint_{B_\rho(x_0)}v(x)dx.$$
\end{assumption}
We also need an additional assumption on the decay of the measures $\mu$ considered.
\begin{assumption}\label{DD}
Let $\mathfrak{C}_2\geqslant 1$. For every $x_0\in\mathbb{R}^N$ and $r>0$,
\begin{equation}\label{D}
\int_{\mathbb{R}^N\backslash B_{r}(x_0)}\mu(x_0,dy)\leqslant\frac{\mathfrak{C}_2}{r^\alpha}.\tag{H$_2$}
\end{equation}
\end{assumption}
\par
\begin{remark}\upshape
An important example satisfying the above conditions (\ref{S}), (\ref{A-poin}) and (\ref{D}) is given by
\begin{equation*}
\mu_{\alpha}(x,dy)=(2-\alpha)\frac{1}{|x-y|^{N+\alpha}}\,dy.
\end{equation*}
We denote by $H^{\alpha/2}(E)$ the usual Sobolev space of fractional order $\alpha/2\in(0,1)$ with the norm
\[\|u\|_{H^{\alpha/2}(E)}=(\|u\|_{L^2(E)}^2+\mathcal{E}_{E}^{\mu_\alpha}(u,u))^{\frac{1}{2}}.\]
In particular, Assumption~\ref{AA} plays the role of an ellipticity condition. It naturally holds if for every ball $B_\rho(x_0)\subset E$ and every $v\in H^{\alpha/2}(B_\rho(x_0))\cap H_{\text{loc}}^\mu(E)$,
\begin{equation*}\label{A1}
\mathcal{E}_{B_\rho(x_0)}^{\mu_\alpha}(v,v)\leqslant C\mathcal{E}_{B_\rho(x_0)}^{\mu}(v,v)
\end{equation*}
for some $C>0$. This is an assumption used in \cite{kass1}. It is also assumed that the reverse estimate holds in \cite{kass1}. We find, however, this is redundant.
\end{remark}
\par
Now, let us formulate some of our main results.
\begin{theorem}[H\"older regularity]\label{Holder}
Let $\mu$ satisfy (\ref{S}), (\ref{A-poin}) and (\ref{D}). Let $u$ be a locally bounded, local weak solution of (\ref{equa}) in $E$. Then, $u$ is locally H\"older continuous in $E$. More precisely, there exist constants $\gamma>1$ and $\alpha_*\in(0,1)$ that can be determined a priori only in terms of the \textit{data}\,$\{\mathfrak{C}_1,\mathfrak{C}_2,N,\alpha\}$, such that for any $0<\rho<R,$ there holds
\begin{equation*}
\underset{B_\rho(x_0)}{\rm{osc}}~u\leqslant\gamma\omega\left(\frac{\rho}{R}\right)^{\alpha_*},
\end{equation*}
provided the ball $B_R(x_0)$ is included in $E$, where
\begin{equation*}
\omega=2\underset{B_R(x_0)}{\rm{sup}}~|u|+{\rm Tail}(|u|;x_0,\tfrac78R,R).
\end{equation*}
\end{theorem}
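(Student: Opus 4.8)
\medskip
\noindent\emph{Plan of proof.}\quad The proof runs a nonlocal De~Giorgi scheme in the spirit of DiBenedetto, Gianazza and Vespri and of Dyda and Kassmann: the Caccioppoli inequality, the $L^\infty$ bound from a measure condition, the measure-shrinking (isoperimetric) lemma and the expansion-of-positivity/oscillation-reduction step are all built from the quadratic form $\mathcal{E}^{\mu}$ and the structural hypotheses \eqref{S}, \eqref{A-poin}, \eqref{D} only, with the nonlocal tail carried explicitly through every estimate. Fixing $x_0$ and $R$ with $B_R(x_0)\subset E$, and writing $R_j:=\sigma^j R$, $M_j:=\sup_{B_{R_j}(x_0)}u$, $m_j:=\inf_{B_{R_j}(x_0)}u$, $\omega_j:=M_j-m_j$, the target is a quantitative \emph{reduction of oscillation}
\[
\omega_{j+1}\ \le\ (1-\eta)\,\omega_j\ +\ C\,\mathrm{Tail}\big(|u|;x_0,\tfrac78 R_j,R_j\big),\qquad j\ge0,
\]
with $\sigma,\eta\in(0,1)$ and $C$ depending only on the data; since $\omega_0\le\operatorname{osc}_{B_R(x_0)}u\le 2\sup_{B_R(x_0)}|u|\le\omega$, a discrete iteration lemma upgrades this to $\omega_j\le\gamma\,\omega\,\sigma^{j\alpha_*}$ for a suitable $\alpha_*\in(0,1)$ with $\sigma^{\alpha_*}\ge1-\eta$, and the stated H\"older bound follows by interpolating between consecutive dyadic radii.

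First I would derive the \emph{energy (Caccioppoli) estimates}. Testing \eqref{def-solution} with $\varphi=(u-k)_{\pm}\tau^2$, for a cutoff $\tau$ supported in $B_r(x_0)$ and a level $k$, using \eqref{S} and the elementary inequality $(a-b)\big(a\tau_a^2-b\tau_b^2\big)\ge\tfrac12\big(a\tau_a-b\tau_b\big)^2-c\,\max\{a,b\}^2(\tau_a-\tau_b)^2$ applied with $a=(u(x)-k)_{\pm}$, $b=(u(y)-k)_{\pm}$, yields
\[
\mathcal{E}^{\mu}_{B_r(x_0)}\big((u-k)_{\pm}\tau,(u-k)_{\pm}\tau\big)\ \lesssim\ \int_{B_r(x_0)}\!\int_{B_r(x_0)}\!\big((u-k)_{\pm}^2(x)+(u-k)_{\pm}^2(y)\big)\big(\tau(x)-\tau(y)\big)^2\,\mu(x,dy)\,dx\ +\ (\text{tail}),
\]
the ``tail'' being the interaction of $B_r(x_0)$ with its complement, estimated via \eqref{D}. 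Combining this with the Poincar\'e inequality \eqref{A-poin} for $(u-k)_{\pm}\tau$ and iterating over shrinking balls $B_{r_n}(x_0)$ and monotone levels $k_n$ gives the \emph{first De~Giorgi lemma}: there is $\nu\in(0,1)$, depending only on the data, such that
\[
\big|\{x\in B_{r}(x_0):\ \pm(u(x)-k)>0\}\big|\ \le\ \nu\,|B_{r}(x_0)|
\]
implies $\pm(u-k)\le0$ a.e.\ in $B_{r/2}(x_0)$, provided the tail of $(u-k)_{\pm}$ at scale $r$ is controlled by a fixed fraction of the distance from $k$ to the corresponding extremum (any excess reappears later as an additive $\mathrm{Tail}$ term); the inclusion of $\mathrm{Tail}(|u|;x_0,\tfrac78R,R)$ in $\omega$ is precisely what keeps this under control.

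The core is then the \emph{measure-shrinking lemma} and the oscillation reduction it yields --- the nonlocal counterpart of DiBenedetto--Gianazza--Vespri expansion of positivity. In $B_{R/2}(x_0)$ at least one of $\big|\{u\ge(M_0+m_0)/2\}\cap B_{R/2}(x_0)\big|\le\tfrac12|B_{R/2}(x_0)|$ and $\big|\{u\le(M_0+m_0)/2\}\cap B_{R/2}(x_0)\big|\le\tfrac12|B_{R/2}(x_0)|$ holds; assume the first (the second is symmetric, replacing $u$ by $-u$). Applying the energy estimate to the subsolution $u$ at the increasing levels $k_i:=M_0-\omega_0/2^i$ together with \eqref{A-poin} produces a measure-shrinking inequality controlling the measure of each layer $\{k_i<u<k_{i+1}\}\cap B_{R/2}(x_0)$; these layers being disjoint, their measures are summable, so $\big|\{u>k_j\}\cap B_{R/2}(x_0)\big|\le\nu|B_{R/2}(x_0)|$ once $j$ is large, $j$ depending only on the data. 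The first De~Giorgi lemma then gives $u\le k_j=M_0-\omega_0/2^j$ in $B_{R/4}(x_0)$, whence $\operatorname{osc}_{B_{R/4}(x_0)}u\le(1-2^{-j})\omega_0$ up to an additive $\mathrm{Tail}$ term --- the asserted reduction of oscillation, with $\sigma=\tfrac14$ and $\eta=2^{-j}$. In the iteration the tails are summable: decomposing $\mathbb{R}^N\setminus B_{R_j}(x_0)$ into the dyadic annuli $B_{R_k}(x_0)\setminus B_{R_{k+1}}(x_0)$, $k<j$, and the far region $\mathbb{R}^N\setminus B_R(x_0)$, and applying \eqref{D} with centres in $B_{7R_j/8}(x_0)$, one gets $\mathrm{Tail}(|u|;x_0,\tfrac78R_j,R_j)\lesssim\sum_{k\le j}(R_j/R_k)^\alpha\sup_{B_{R_k}(x_0)}|u|+(R_j/R)^\alpha\,\mathrm{Tail}(|u|;x_0,\tfrac78R,R)$, and a discrete Gronwall-type argument reconciles this with the geometric decay of $\omega_j$, fixing $\alpha_*$ and $\gamma$ in terms of $\{\mathfrak{C}_1,\mathfrak{C}_2,N,\alpha\}$ alone.

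The step I expect to be the main obstacle is the measure-shrinking lemma together with the attendant tail bookkeeping. Since the kernels are genuine measures, none of the pointwise tools available when $\mu(x,dy)\asymp|x-y|^{-N-\alpha}dy$ survive: $\mu(x,dy)$ cannot be compared with $\mu(x',dy)$ and the kernel cannot be localized, so every inequality must be expressed through the integrated quantities $\int_{\Omega}\mu(x,dy)$ and through \eqref{S}, \eqref{A-poin}, \eqref{D}. Pushing the isoperimetric/measure-shrinking inequality through in this purely measure-theoretic language, and verifying that the per-scale tail corrections do not accumulate fast enough to break the geometric decay of $\omega_j$, is where the genuine difficulty of the theorem is concentrated.
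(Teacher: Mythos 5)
Your overall architecture matches the paper's: energy (Caccioppoli) estimates from testing with $(u-k)_\pm\zeta^2$, a ``critical mass'' De Giorgi iteration (the paper's Lemma~\ref{Crit-Densi}, driven by the Faber--Krahn inequality that Lemma~\ref{poin->FK} extracts from \eqref{A-poin}), a measure-shrinking step, an expansion of positivity, and a dyadic oscillation reduction with the tail decomposed over annuli. However, there are two places where your plan, as written, would not go through.

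First, the measure-shrinking step. You propose the classical De Giorgi route: apply the energy estimate at the geometric levels $k_i=M_0-\omega_0/2^i$, control the measure of each layer $\{k_i<u<k_{i+1}\}$, and sum over the disjoint layers. This mechanism relies on the De Giorgi isoperimetric inequality, whose proof uses that the gradient of the truncation $v_i=(u-k_i)_+\wedge(k_{i+1}-k_i)$ is supported in the $i$-th layer. In the nonlocal setting this localization fails: if $u(x)>k_{i+1}$ and $u(y)<k_i$ then $|v_i(x)-v_i(y)|=k_{i+1}-k_i$ even though neither point lies in the layer, so $\mathcal{E}^\mu_{B}(v_i,v_i)$ is not a ``layer quantity''. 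One can still sum $\sum_i\mathcal{E}^\mu_B(v_i,v_i)\leqslant\mathcal{E}^\mu_B(w,w)$ using $\sum_i|v_i(x)-v_i(y)|^2\leqslant|u(x)-u(y)|^2$, but then the Poincar\'e inequality \eqref{A-poin} applied to each $v_i$ only yields $\sum_i 2^{-2i}\big(|A_{i+1}|/|B|\big)^2\lesssim 1$, which is vacuous; the factors $|A_i\setminus A_{i+1}|^{1/2}$ that make the classical summation bite are simply not available from an $L^2$--$L^2$ Poincar\'e inequality for a general family of measures. The paper circumvents this entirely by a \emph{logarithmic estimate} (Proposition~\ref{log-esti}): testing the equation with $\zeta^2/(u+h)$ gives $\int\int|\log\frac{u(x)+h}{u(y)+h}|^2\mu(x,dy)dx\lesssim\rho^{N-\alpha}$, and then \eqref{A-poin} applied to $v=\log_+\frac{M+h}{u+h}$ with $h=\sigma M$ yields the measure shrinking (Lemma~\ref{Mea-Shri}) with no isoperimetric input. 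This is the missing idea in your proposal, and it is the reason the theorem holds under \eqref{S}, \eqref{A-poin}, \eqref{D} alone.

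Second, the tail bookkeeping. You estimate ${\rm Tail}(|u|;x_0,\tfrac78R_j,R_j)\lesssim\sum_{k\leqslant j}(R_j/R_k)^\alpha\sup_{B_{R_k}}|u|+(R_j/R)^\alpha{\rm Tail}(|u|;x_0,\tfrac78R,R)$. The term with $k=j$ is $\sup_{B_{R_j}}|u|$, which does not decay in $j$, so the recursion $\omega_{j+1}\leqslant(1-\eta)\omega_j+C\,{\rm Tail}(|u|;\dots)$ would only give $\omega_j\leqslant(1-\eta)^j\omega_0+\frac{C}{\eta}\sup_{B_R}|u|$ and no H\"older modulus. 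The correct object is the tail of the shifted truncation $(u-\mu_j^-)_-$ (resp.\ $(\mu_j^+-u)_-$): on the annulus $B_{i-1}\setminus B_i$ one has $(u-\mu_j^-)_-\leqslant\omega_{i-1}$, which \emph{does} decay geometrically, and then the scaling factor $c$ is chosen small enough (depending on $\sigma$) that the resulting geometric sum stays below the threshold $\tfrac14\sigma\omega_j$ at every step, so the tail alternative in the expansion of positivity never triggers. This is exactly the computation in Section~4 of the paper, and it must be done with the oscillations, not with $\sup|u|$.
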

\begin{theorem}[weak Harnack estimate]\label{weak-harn}
Let $\mu$ satisfy (\ref{S}), (\ref{A-poin}) and (\ref{D}). Let $u$ be a local weak super-solution of (\ref{equa}) in $E$, such that $u\geqslant 0$ in $B_R(x_0)$. Then, for any $0<r<R/32$, there holds
\begin{equation*}
\inf_{B_{2r}(x_0)}u\geqslant \eta\bigg(\fint_{B_r(x_0)}u^{\varepsilon}\,dx\bigg)^{\frac{1}{\varepsilon}}-\left(\frac{r}{R}\right)^\alpha{\rm Tail}(u_-;x_0,\tfrac78R,R),
\end{equation*}
provided the ball $B_R(x_0)$ is included in $E$, where the constants $\varepsilon,\eta\in(0,1)$ depend only on $\mathfrak{C}_1,\mathfrak{C}_2,N$ and $\alpha$.
\end{theorem}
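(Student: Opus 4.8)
The strategy follows the De Giorgi--DiBenedetto--Gianazza--Vespri scheme: I would build a Caccioppoli inequality, a measure-to-pointwise (De Giorgi) lemma for super-solutions --- the machinery also underlying Theorem~\ref{Holder} --- and a logarithmic estimate, then merge them through an expansion-of-positivity argument. Normalize $x_{0}=0$, fix $r<R/32$, and set $T:=\bigl(\tfrac rR\bigr)^{\alpha}{\rm Tail}(u_-;0,\tfrac78R,R)$. I work with the shift $\bar u:=u+T$, which is again a super-solution since $\mathcal E^{\mu}(\cdot,\varphi)$ is unaffected by adding constants to its first entry, which satisfies $\bar u\ge T>0$ on $B_{R}$, and whose negative nonlocal tail over $B_{R}^{c}$ is, by the choice of $T$, comparable to $T\rho^{-\alpha}$ on the balls $B_{\rho}\subseteq B_{R}$ used below. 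First I would prove the Caccioppoli inequality: testing \eqref{def-solution} with $\varphi=\phi^{2}(\bar u-k)_-$, where $k>0$ and $\phi$ is a Lipschitz cutoff with $\phi\equiv1$ on $B_{\rho}$, $\mathrm{supp}\,\phi\subset B_{2\rho}\subset B_{R}$, $|\nabla\phi|\lesssim\rho^{-1}$, and performing the standard splitting of the double integral over the super-level sets of $\bar u$, yields
\[
\mathcal E^{\mu}_{B_{2\rho}}\bigl(\phi(\bar u-k)_-,\,\phi(\bar u-k)_-\bigr)\ \le\ C\,k^{2}\rho^{-\alpha}\,\bigl|\{\bar u<k\}\cap B_{2\rho}\bigr|,
\]
the cross term $\int_{B_{2\rho}}\!\int_{B_{2\rho}}(\bar u-k)_-^{2}|\phi(x)-\phi(y)|^{2}\mu(x,dy)\,dx$ being controlled by \eqref{D} and the second-moment bound $\int_{\{|x-y|<1\}}|x-y|^{2}\mu(x,dy)\lesssim\mathfrak C_{2}\int_{0}^{1}s^{1-\alpha}\,ds<\infty$ (here $\alpha<2$ enters), and the nonlocal tail absorbed into the right side by the choice of $T$. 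Combining this with the Poincar\'e inequality \eqref{A-poin} and the measure-shrinking/fast-geometric-convergence machinery of DiBenedetto--Gianazza--Vespri --- crucially, using \eqref{A-poin} but \emph{no} gain of integrability, hence iterating on the levels $k_{j}=\tfrac k2+\tfrac k{2^{j+1}}$ and radii $\rho_{j}=\rho(1+2^{-j})$ --- I would obtain the De Giorgi lemma: there is $\nu_{0}=\nu_{0}(\mathfrak C_{1},\mathfrak C_{2},N,\alpha)\in(0,1)$ such that if $\bigl|\{\bar u\le k\}\cap B_{2\rho}\bigr|\le\nu_{0}|B_{2\rho}|$ for some $k>0$ and $B_{2\rho}\subset B_{R}$, then $\bar u\ge k/2$ a.e. in $B_{\rho}$.

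Next I would derive the logarithmic estimate: testing \eqref{def-solution} with $\varphi=\phi^{2}\bar u^{-1}$ (admissible since $\bar u\ge T>0$ on $B_{R}$) and using the elementary inequality $(a-b)(b^{-1}-a^{-1})\ge(\log a-\log b)^{2}$ for $a,b>0$ in its cutoff-localized refinement, to get, for every $B_{2\rho}\subset B_{R}$,
\[
\mathcal E^{\mu}_{B_{2\rho}}\bigl(\log\bar u,\,\log\bar u\bigr)\ \le\ C\,\rho^{-\alpha}\,|B_{2\rho}|,
\]
again with the tail absorbed by $T$. Applying \eqref{A-poin} to $\log\bar u$ yields a $BMO$-type bound,
\[
\fint_{B_{\rho}(z)}\bigl|\log\bar u-[\log\bar u]_{B_{\rho}(z)}\bigr|^{2}dx\ \le\ C_{L}(\mathfrak C_{1},\mathfrak C_{2},N,\alpha)\quad\text{whenever }B_{2\rho}(z)\subset B_{R},
\]
so that John--Nirenberg supplies $\varepsilon=\varepsilon(\mathfrak C_{1},\mathfrak C_{2},N,\alpha)\in(0,1)$ and $C_{1}\ge1$ with
\[
\Bigl(\fint_{B_{r}}\bar u^{\varepsilon}\,dx\Bigr)^{1/\varepsilon}\ \le\ C_{1}\Bigl(\fint_{B_{r}}\bar u^{-\varepsilon}\,dx\Bigr)^{-1/\varepsilon}.
\]

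To finish, set $N_{-}:=\bigl(\fint_{B_{r}}\bar u^{-\varepsilon}dx\bigr)^{-1/\varepsilon}$. Chebyshev applied to $\bar u^{-\varepsilon}$ gives $\bigl|\{\bar u<2^{-1/\varepsilon}N_{-}\}\cap B_{r}\bigr|\le\tfrac12|B_{r}|$, so $\bar u\ge2^{-1/\varepsilon}N_{-}$ on at least the fixed fraction $\sigma_{1}:=2^{-1-N}$ of $B_{2r}$. Feeding this into the $BMO$-bound on $B_{2r}$ --- the mean $[\log\bar u]_{B_{2r}}$ cannot fall more than $\sqrt{C_{L}/\sigma_{1}}$ below $\log(2^{-1/\varepsilon}N_{-})$, whence $\bigl|\{\bar u<2^{-j-1/\varepsilon}N_{-}\}\cap B_{2r}\bigr|\le C_{L}\bigl(j\log2-\sqrt{C_{L}/\sigma_{1}}\bigr)^{-2}|B_{2r}|$ --- I pick $j^{*}=j^{*}(\text{data})$ making the right side $\le\nu_{0}$ and apply the De Giorgi lemma with $2\rho=4r\subset B_{R}$ (legitimate since $r<R/32$) to get $\bar u\ge2^{-j^{*}-1-1/\varepsilon}N_{-}$ a.e. in $B_{2r}$. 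With the reverse H\"older bound above and $\bar u\ge u$ this gives
\[
\inf_{B_{2r}}u=\inf_{B_{2r}}\bar u-T\ \ge\ \eta\Bigl(\fint_{B_{r}}u^{\varepsilon}\,dx\Bigr)^{1/\varepsilon}-\Bigl(\tfrac rR\Bigr)^{\alpha}{\rm Tail}(u_-;0,\tfrac78R,R),\qquad \eta:=2^{-j^{*}-1-1/\varepsilon}C_{1}^{-1}\in(0,1),
\]
which is the claim (shrinking $\eta$ if necessary).

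The main obstacle is the De Giorgi lemma for super-solutions. The structural hypotheses provide only a Poincar\'e inequality \eqref{A-poin} and no Sobolev-type gain of integrability, so the textbook De Giorgi iteration is unavailable and one must implement the DiBenedetto--Gianazza--Vespri measure-shrinking argument (Caccioppoli $+$ \eqref{A-poin} $+$ a De Giorgi isoperimetric-type inequality adapted to $\mathcal E^{\mu}$) while keeping the nonlocal tail under uniform control at every stage of the iteration. The supporting Caccioppoli and logarithmic estimates are themselves delicate: the kernel-of-measure generality forbids pointwise gradient manipulations, and one must exploit \eqref{D} together with the $\alpha<2$ second-moment bound to tame the diagonal behaviour of $\mu$. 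A further technical point is the bookkeeping that lets the single shift $T$ absorb the tail simultaneously in all three estimates at the scales actually used.
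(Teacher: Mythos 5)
Your overall architecture is the classical Moser-type crossover (Caccioppoli $\to$ De Giorgi critical-mass lemma; logarithmic estimate $\to$ BMO bound for $\log\bar u$ $\to$ John--Nirenberg $\to$ reverse H\"older inequality between $\fint\bar u^{\varepsilon}$ and $\fint\bar u^{-\varepsilon}$), which is genuinely different from the paper's route: the paper deliberately avoids the BMO/John--Nirenberg crossover and instead proves an \emph{improved} expansion of positivity (Proposition~\ref{Exp-Pos-im}) with power-type dependence on the density $\beta$, obtained via the clustering Lemma~\ref{cluster}, and then derives the $L^{\varepsilon}$ bound by a layer-cake computation on the distribution function $\lambda\mapsto|\{u\geqslant\lambda\}\cap B_r(x_0)|$. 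Your route is essentially that of Dyda--Kassmann and is legitimate under the stated hypotheses; in particular, the logarithmic estimate (Proposition~\ref{log-esti}) combined with \eqref{A-poin} does yield the uniform BMO bound you need on all balls $B_\rho(z)$ with $B_{2\rho}(z)\subset B_R(x_0)$, once the tail is absorbed by the shift $T$ as you describe.

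There is, however, one genuine gap: the critical-mass (De Giorgi) lemma. You assert that the iteration on the levels $k_j$ and radii $\rho_j$ closes ``using \eqref{A-poin} but no gain of integrability'', and elsewhere invoke ``a De Giorgi isoperimetric-type inequality adapted to $\mathcal{E}^{\mu}$''. Neither works as stated. The recursion $Y_{n+1}\leqslant\gamma b^{n}Y_n^{1+\epsilon}$ that drives fast geometric convergence requires a superlinear exponent, i.e.\ some Sobolev or Faber--Krahn type gain; the plain Poincar\'e inequality \eqref{A-poin} applied at a fixed scale only yields $Y_{n+1}\lesssim Y_n$, and no nonlocal isoperimetric inequality is available from \eqref{S}, \eqref{A-poin}, \eqref{D} alone. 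The missing ingredient is exactly the paper's Lemma~\ref{poin->FK}: the Poincar\'e inequality \eqref{A-poin} \emph{does} imply the Faber--Krahn inequality \eqref{FK}, but this requires the Besicovitch covering and interpolation argument given there, after which Lemma~\ref{Crit-Densi} runs the iteration with exponent $1+\alpha/N$. You should also repair a small bookkeeping mismatch at the end: the measure smallness is established on $B_{2r}$ while the De Giorgi lemma applied with $\rho=2r$ needs it on $B_{4r}$; run the Chebyshev/BMO argument on $B_{4r}$ directly, which is admissible since $r<R/32$.
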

Under the mere conditions (\ref{S}), (\ref{A-poin}) and (\ref{D}), a full Harnack estimate generally cannot be reached; see \cite{bogdan,schu} for counterexamples. In what follows, we adopt a standard assumption, called (\ref{UJS}), in order to establish a full Harnack estimate.
\par
We assume that $\mu$ can be written as
\begin{equation}\label{mu}
\mu(x,dy)=K(x,y)\,dy
\end{equation}
where $K:\mathbb{R}^N\times\mathbb{R}^N\rightarrow[0,\infty)$ is a measurable function that is symmetric in the sense that $K(x,y)=K(y,x)$ for all $x,y\in\mathbb{R}^N$, and $K(x,y)$ satisfies (\ref{UJS}) as follows.
\begin{assumption}
There exists $\hat{c}>0$ such that for every $x,y\in\mathbb{R}^N$ and every $r\leqslant \frac{|x-y|}{2}$ we have
\begin{equation}\label{UJS}
K(x,y)\leqslant\hat{c}\fint_{B_r(x)}K(z,y)\,dz.\tag{UJS}
\end{equation}
\end{assumption}
\noindent Apparently $\mu_\alpha$ satisfies (\ref{UJS}) condition. We can find some more examples that satisfy (\ref{UJS}) in \cite[Chapter~12]{schu}. Here, we point out that the (\ref{UJS}) assumption together with (\ref{D}) implies a pointwise upper bound of $K$; see Lemma~\ref{point-upper}.
\par
Now we are ready to formulate the other two main results in this paper.

\begin{theorem}[Harnack estimate]\label{full-harn}
Let $\mu$ be represented by (\ref{mu}) and satisfy (\ref{S}), (\ref{A-poin}), (\ref{D}) and (\ref{UJS}). Let $u$ be a local weak solution of (\ref{equa}) in $E$, such that $u\geqslant 0$ in $B_R(x_0)$. Then, for any $0<r<R/32$, there holds
    \begin{equation*}
        \sup_{B_r(x_0)}u\leqslant\gamma\inf_{B_{r}(x_0)}u+\gamma \left(\frac{r}{R}\right)^\alpha{\rm Tail}(u_-;x_0,\tfrac78R,R),
    \end{equation*}
    provided the ball $B_R(x_0)$ is included in $E$, where the constant $\gamma>1$ depends only on $\mathfrak{C}_1,\mathfrak{C}_2,N,\alpha$ and $\hat{c}$.
\end{theorem}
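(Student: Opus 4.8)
The plan is to derive Theorem~\ref{full-harn} by the De Giorgi--Nash--Moser pattern: combine the weak Harnack estimate (Theorem~\ref{weak-harn}), which controls $\inf_{B_{2r}(x_0)}u$ from below by a small $L^{\varepsilon}$-average of $u$ on $B_r(x_0)$, with a local boundedness (sup) estimate controlling $\sup u$ from above by the same $L^{\varepsilon}$-average; chaining the two through that one common average then yields a two-sided bound. Since a local weak solution is a local weak sub-solution, the boundedness estimate applies to $u$ itself, and since $u\ge0$ on $B_R(x_0)$, we have $u_+=u$ there. The hypotheses $\mu=K\,dy$ and (\ref{UJS}) enter exactly here: via Lemma~\ref{point-upper} they provide the pointwise control of $K$ needed to handle long-range interactions (there is no pointwise kernel in the framework of Theorems~\ref{Holder} and~\ref{weak-harn}), and (\ref{UJS}) --- being the only device that compares the kernel at two nearby base points --- is what allows the nonlocal tail of $u_+$ to be absorbed into $\inf_{B_r(x_0)}u$ together with the tail of $u_-$.

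\emph{Step 1: local boundedness for sub-solutions.} For a local weak sub-solution $w$, every $B_{2\rho}(x_0)\subset E$, every $p\in(0,2]$ and every $\delta\in(0,1]$, I would prove
\[
\sup_{B_{\rho}(x_0)}w\;\le\;\delta\,{\rm Tail}(w_+;x_0,\rho,2\rho)+\gamma\Big(\fint_{B_{2\rho}(x_0)}w_+^{\,p}\,dx\Big)^{1/p},
\]
with $\gamma$ depending only on $\mathfrak C_1,\mathfrak C_2,N,\alpha,\hat c,p$ and $\delta$. This is a De Giorgi iteration: test (\ref{def-solution}) with $\varphi=(w-k_j)_+\tau_j^2$ for concentric cutoffs $\tau_j$ and increasing truncation levels $k_j$; symmetrize by (\ref{S}); split $\mathbb R^N\times\mathbb R^N$ into the near block --- bounded below by $\mathcal E^{\mu}_{B}((w-k_j)_+,(w-k_j)_+)$ --- and the far block --- bounded via (\ref{D}) and Lemma~\ref{point-upper}, which is also where the freely small coefficient $\delta$ in front of the tail originates; turn the energy into an $L^2$-gain on super-level sets by the Poincar\'e inequality (\ref{A-poin}) and H\"older's inequality; and close the recursion with the usual fast-convergence iteration lemma. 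This gives $p=2$; arbitrary $p\in(0,2]$ follows from $w_+^2\le(\sup w_+)^{2-p}w_+^{\,p}$, Young's inequality and a second concentric-ball iteration absorbing $\sup w_+$ on the right.

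\emph{Step 2: controlling the tail of a nonnegative solution.} Here I would establish, for $u$ as in the statement and $0<r<R/32$,
\[
{\rm Tail}(u_+;x_0,r,2r)\;\le\;\gamma\inf_{B_{2r}(x_0)}u+\gamma\Big(\tfrac rR\Big)^{\alpha}{\rm Tail}(u_-;x_0,\tfrac78R,R).
\]
Using $u\ge0$ on $B_R$, split the defining integral of ${\rm Tail}(u_+;x_0,r,2r)$ over $\mathbb R^N\setminus B_{2r}$ into the piece over $B_R\setminus B_{2r}$, where $u_+=u\ge0$, and the piece over $\mathbb R^N\setminus B_R$. For the far piece I would use (\ref{UJS}): for $x\in B_r$ and $y$ outside $B_R$ it replaces $K(x,y)$ by $\hat c\fint_{B_{r/4}(x)}K(z,y)\,dz$, so that interchanging the order of integration and then using the equation for $u$ together with $u\ge0$ in $B_R$ --- which forces the exterior positive mass of $u$ to contribute at least a comparable amount to $\inf_{B_{R/2}(x_0)}u$ --- bounds it by $\gamma\inf_{B_{R/2}(x_0)}u+\gamma(r/R)^{\alpha}{\rm Tail}(u_-;x_0,\tfrac78R,R)$, hence by the right-hand side above since $B_{2r}\subset B_{R/2}$. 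For the intermediate piece over $B_R\setminus B_{2r}$, decompose into dyadic annuli, estimate $\mu(x,\cdot)$ on each by (\ref{D})/Lemma~\ref{point-upper}, bound $u$ on each annulus by its supremum, and use Step~1 together with Theorem~\ref{weak-harn} at the corresponding scale to replace that supremum by $\gamma\inf_{B_{2r}(x_0)}u$ plus a lower-order tail, summing the resulting geometric series in the scale.

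\emph{Step 3: conclusion, and the main obstacle.} Apply Step~1 to $u$ with $p=\varepsilon$ (the exponent of Theorem~\ref{weak-harn}) and $\delta=1$, then Step~2 to remove the tail of $u_+$, then Theorem~\ref{weak-harn} in the form $\big(\fint_{B_{2r}(x_0)}u^{\varepsilon}\big)^{1/\varepsilon}\le\gamma\big(\inf_{B_{4r}(x_0)}u+(r/R)^{\alpha}{\rm Tail}(u_-;x_0,\tfrac78R,R)\big)$; since $\inf$ over a larger ball does not exceed $\inf_{B_r(x_0)}u$, this chain yields $\sup_{B_r(x_0)}u\le\gamma\inf_{B_r(x_0)}u+\gamma(r/R)^{\alpha}{\rm Tail}(u_-;x_0,\tfrac78R,R)$ after the customary harmless adjustment of the numerical radii (admissible because $r<R/32$). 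The main obstacle is Step~1 in the measure-kernel framework --- the Caccioppoli estimate and the De Giorgi iteration must dispose of all long-range contributions using only (\ref{S}), (\ref{D}) and (\ref{UJS})/Lemma~\ref{point-upper}, with no pointwise kernel at hand, and the tail must be tracked faithfully through every step --- together with the tail-splitting of Step~2, where (\ref{UJS}) is genuinely indispensable since it is the sole tool comparing the kernel at two nearby base points; the remainder is bookkeeping analogous to the classical local case.
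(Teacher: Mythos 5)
Your overall architecture (a sup-estimate for sub-solutions with a small coefficient in front of the nonlocal tail, a lemma converting ${\rm Tail}(u_+)$ into local information, then the weak Harnack inequality) is the same general De Giorgi/DKP-style plan that the paper follows, and your Step 1 is essentially Theorem~\ref{bounded2}. The genuine gap is Step 2: the inequality ${\rm Tail}(u_+;x_0,r,2r)\leqslant\gamma\inf_{B_{2r}(x_0)}u+\gamma(r/R)^\alpha{\rm Tail}(u_-;x_0,\tfrac78R,R)$, with the \emph{infimum} on the right, is not obtainable by the manipulations you sketch, and in this measure-kernel generality it is essentially as deep as the Harnack inequality itself. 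What testing the equation actually yields is the \emph{sup}-version (the paper's Lemma~\ref{pre-control}): $\int_{B_{\ell\rho}}\int_{\mathbb{R}^N\setminus B_\rho}u_+(y)\,\mu(x,dy)dx\leqslant\gamma\rho^{N-\alpha}\big(\sup_{B_\rho}u+(\rho/R)^\alpha{\rm Tail}(u_-)\big)$, because the interior double integral produced by the test function is controlled by differences of $u$, hence by its supremum, not its infimum. Your ``far piece'' claim --- that exterior positive mass forces a proportional lower bound on $\inf_{B_{R/2}}u$ --- is an unproven positivity-from-the-tail lemma; it holds for the fractional Laplacian via the Poisson kernel, but no such tool exists here, and nothing in the paper supplies it (note the weak Harnack, Theorem~\ref{weak-harn}, carries no ${\rm Tail}(u_+)$ term on its left-hand side). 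Your ``intermediate piece'' is also circular: to replace $\sup u$ on a dyadic annulus by $\inf_{B_{2r}(x_0)}u$ you invoke Step 1 plus the weak Harnack at that scale, but Step 1 at that scale reintroduces a ${\rm Tail}(u_+)$ of exactly the type being estimated, and for annuli of radius between $R/32$ and $R$ the weak Harnack centered at $x_0$ is not even applicable (it needs radius $<R/32$ with $u\geqslant0$ on $B_R$), while near $\partial B_R$ no Harnack chain with uniform constant is available.

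The paper's route avoids this entirely: it keeps the sup-version of the tail bound, inserts it into the De Giorgi iteration to get Theorem~\ref{bounded} for solutions, namely $\sup_{B_{\rho/2}}u\leqslant\delta\big[\sup_{B_\rho}u+(\rho/R)^\alpha{\rm Tail}(u_-)\big]+\gamma\delta^{-\frac{N}{2\alpha}}\big(\fint_{B_\rho}u_+^2\,dx\big)^{1/2}$, and then absorbs the $\delta\sup$ over the larger ball by a covering argument and an iteration over radii $t_i$, using Young's inequality to lower the exponent from $2$ to the $\varepsilon$ of Theorem~\ref{weak-harn}; only then is the weak Harnack applied. If you replace your Step 2 by the sup-version and add this absorption/interpolation iteration, your argument becomes correct --- and coincides with the paper's proof.
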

In pursuit of the Harnack estimate, we also establish a boundedness estimate for sub-solutions.
\begin{theorem}[Local boundedness estimate]\label{bounded2}
Let $\mu$ be represented by (\ref{mu}) and satisfy (\ref{S}), (\ref{A-poin}), (\ref{D}) and (\ref{UJS}). Let $B_R(x_0)\subset E$. Let $u$ be a local weak sub-solution of (\ref{equa}) in $E$, such that $u\geqslant 0$ in $B_R(x_0)$. Then, for any $0<r<R$,
    \begin{equation*}
        \sup_{B_{\frac{r}{2}}(x_0)}u\leqslant\delta r^\alpha\int_{\mathbb R^N\backslash B_{\frac{r}{2}}(x_0)}\frac{u_+(x)}{|x-x_0|^{N+\alpha}}\,dx+\gamma\frac{1}{\delta^{\frac{N}{2\alpha}}}\bigg(\fint_{B_r(x_0)}u_+^2\,dx\bigg)^{\frac{1}{2}},
    \end{equation*}
    for any $\delta\in(0,1]$, where the constant $\gamma$ depends only on $\mathfrak{C}_1,\mathfrak{C}_2,N,\alpha$ and $\hat{c}$.
\end{theorem}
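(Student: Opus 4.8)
The plan is a De~Giorgi iteration for the supremum of a sub-solution, in the spirit of DiBenedetto--Gianazza--Vespri. Fix $B_R(x_0)\subset E$ with $u\ge 0$ in $B_R(x_0)$, fix $0<r<R$ and $\delta\in(0,1]$, and write $T_0:=\int_{\mathbb R^N\setminus B_{r/2}(x_0)}u_+(x)\,|x-x_0|^{-N-\alpha}\,dx$. I will choose
\[
k:=\delta\,r^\alpha T_0+\gamma\,\delta^{-N/(2\alpha)}\Big(\fint_{B_r(x_0)}u_+^2\,dx\Big)^{1/2},
\]
the constant $\gamma=\gamma(\mathfrak C_1,\mathfrak C_2,N,\alpha,\hat c)\ge 1$ being fixed at the end, and run the iteration over the shrinking balls $\rho_j=\tfrac r2(1+2^{-j})$ with intermediate radii $\tilde\rho_j=\tfrac12(\rho_j+\rho_{j+1})$, the increasing levels $k_0=0$, $k_j\uparrow k$ with $k_{j+1}-k_j\simeq k\,2^{-j}$, and the quantities $A_j:=\fint_{B_{\rho_j}(x_0)}(u-k_j)_+^2\,dx$. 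If I can show $A_j\to 0$, then $\fint_{B_{r/2}(x_0)}(u-k)_+^2=0$, i.e. $\sup_{B_{r/2}(x_0)}u\le k$, which is the claim (after reducing to bounded $u$ by the standard truncation $u\wedge M$, $M\to\infty$).

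The first step is a Caccioppoli-type energy estimate. Testing \eqref{def-solution} with $\varphi=(u-k_j)_+\tau_j^2$ for a cut-off $\tau_j$ with $\tau_j\equiv 1$ on $B_{\rho_{j+1}}(x_0)$, $\operatorname{supp}\tau_j\subset B_{\tilde\rho_j}(x_0)$ and $|\nabla\tau_j|\lesssim 2^j/r$, using \eqref{S} to symmetrise the bilinear form, a standard pointwise inequality for the product $(u(x)-u(y))\big((u(x)-k_j)_+\tau_j(x)^2-(u(y)-k_j)_+\tau_j(y)^2\big)$ on the diagonal part, the bound $\int_{B_s(x)}|x-y|^2\,\mu(x,dy)\le Cs^{2-\alpha}$ (a consequence of \eqref{D}) for the cut-off error, and $u\ge 0$ in $B_R(x_0)$ with $k_j\ge 0$ to replace $u(y)-u(x)$ by $(u(y)-k_j)_+$ against the nonnegative test function in the off-diagonal part, one obtains
\[
\mathcal E^{\mu}_{B_{\tilde\rho_j}(x_0)}\big((u-k_j)_+\tau_j,(u-k_j)_+\tau_j\big)\le \frac{C\,2^{j\alpha}}{r^\alpha}\int_{B_{\rho_j}(x_0)}(u-k_j)_+^2\,dx+C\,\Theta_j\int_{B_{\tilde\rho_j}(x_0)}(u-k_j)_+\tau_j^2\,dx,
\]
where $\Theta_j:=\sup_{x\in B_{\rho_{j+1}}(x_0)}\int_{\mathbb R^N\setminus B_{\tilde\rho_j}(x_0)}(u-k_j)_+(y)\,\mu(x,dy)$. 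The tail quantity $\Theta_j$ is split over $B_r(x_0)\setminus B_{\tilde\rho_j}(x_0)$ and $\mathbb R^N\setminus B_r(x_0)$: on the first region $|x-y|\gtrsim r2^{-j}$ and $(u-k_j)_+\le (k_j-k_{j-1})^{-1}(u-k_{j-1})_+^2$ on $\{u>k_j\}$, so this piece is $\le C\,b^{j}r^{-\alpha}k^{-1}A_{j-1}$; on the second region I use the pointwise bound $K(x,y)\le\Lambda|x-y|^{-N-\alpha}$ from Lemma~\ref{point-upper} (consequence of \eqref{UJS} and \eqref{D}) together with $|x-y|\ge\tfrac12|x_0-y|$, getting $\le\Lambda\,2^{N+\alpha}T_0$. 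Likewise $\int_{B_{\tilde\rho_j}(x_0)}(u-k_j)_+\le (k_j-k_{j-1})^{-1}\int_{B_{\rho_{j-1}}(x_0)}(u-k_{j-1})_+^2\simeq r^N k^{-1}2^j A_{j-1}$. Using the weak-type (Chebyshev) bound here rather than Cauchy--Schwarz is what will keep the nonlocal tail linear in $\delta$ below.

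Now comes the gain of integrability, which turns $L^2$ energy control into $L^2$-smallness of $A_{j+1}$. My plan is to upgrade the Poincar\'e inequality \eqref{A-poin}, by a telescoping/maximal-function argument over dyadic sub-balls (legitimate since localized Dirichlet energies are superadditive over disjoint decompositions), to a Sobolev--Poincar\'e inequality
\[
\Big(\fint_{B_\rho(x_0)}\big|v-[v]_{B_\rho(x_0)}\big|^{2^*}\,dx\Big)^{2/2^*}\le C\,\rho^{\alpha-N}\,\mathcal E^{\mu}_{B_\rho(x_0)}(v,v),\qquad 2^*=\frac{2N}{N-\alpha}
\]
(any exponent $2^*>2$ would suffice). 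Applied to $v=(u-k_j)_+\tau_j$ on $B_{\tilde\rho_j}(x_0)$, with its mean value estimated by $A_j^{1/2}$, combined with the energy estimate and the bounds above, and followed by H\"older's inequality on $\{u>k_{j+1}\}\cap B_{\rho_{j+1}}(x_0)$ together with $|\{u>k_{j+1}\}\cap B_{\rho_j}(x_0)|\le (k_{j+1}-k_j)^{-2}\int_{B_{\rho_j}(x_0)}(u-k_j)_+^2$, this yields (using $A_j\le C A_{j-1}$) a recursion
\[
A_{j+1}\le C\,b^{j}\,\frac{A_{j-1}^{1+\sigma}}{k^{2\sigma}}\Big(1+\frac{r^\alpha T_0}{k}+\frac{A_{j-1}}{k^2}\Big),\qquad \sigma=\frac{\alpha}{N},\quad b=b(N,\alpha)>1.
\]
Since $k\ge\delta\,r^\alpha T_0$, once the iteration has started the bracket is $\le C\delta^{-1}$, so with $Z_j:=A_j/k^2$ one gets $Z_{j+1}\le C\delta^{-1}b^{j}Z_{j-1}^{1+\sigma}$, and the standard (two-step) fast-geometric-convergence lemma gives $Z_j\to 0$ as soon as $Z_0\le c(C\delta^{-1},b,\sigma)$; solving this for $k$ is exactly $k\ge\gamma\,\delta^{-1/(2\sigma)}(\fint_{B_r(x_0)}u_+^2)^{1/2}=\gamma\,\delta^{-N/(2\alpha)}(\fint_{B_r(x_0)}u_+^2)^{1/2}$ for a suitable $\gamma$, which holds for the stated choice of $k$. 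I expect the main obstacle to be precisely this gain of integrability --- extracting an exponent $2^*>2$ (or, equivalently, a nonlocal De~Giorgi-type isoperimetric inequality) from the bare $L^2$-Poincar\'e \eqref{A-poin}, and checking the subadditivity bookkeeping for the localized energies --- the remaining estimates being routine, with the one delicate point being the tail accounting that produces the linear $\delta$ in front of $r^\alpha T_0$.
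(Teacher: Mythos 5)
Your overall architecture coincides with the paper's: a De~Giorgi iteration over shrinking balls and rising levels, the Caccioppoli estimate of Proposition~\ref{DG-class}, the pointwise kernel bound $K(x,y)\leqslant\gamma|x-y|^{-N-\alpha}$ of Lemma~\ref{point-upper} (derived from (\ref{UJS}) and (\ref{D})) to convert the nonlocal tail into $\int_{\mathbb{R}^N\backslash B_{r/2}}u_+(y)|y|^{-N-\alpha}dy$, the Chebyshev trick $\widetilde w_n\leqslant w_n^2/(\widetilde k_n-k_n)$ to keep that tail linear in $\delta$, and the final two-parameter choice of $k$. All of that is sound and matches the paper's proof step for step.

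The one load-bearing step you leave unproved is the gain of integrability, and the mechanism you propose for it is not the right one. You claim that (\ref{A-poin}) self-improves, "by a telescoping/maximal-function argument over dyadic sub-balls," to a Sobolev--Poincar\'e inequality with exponent $2^*=2N/(N-\alpha)$. No such self-improvement is established (or needed) here, and it is doubtful that it follows from the bare $L^2$-Poincar\'e inequality (\ref{A-poin}) together with (\ref{D}) alone: the localized energies $\mathcal{E}^\mu_{B}$ do not chain over overlapping balls the way gradient energies do, and the Haj\l asz--Koskela type self-improvement arguments you are implicitly invoking require an upper-gradient structure that is absent in this generality. What the paper actually uses --- and what your recursion genuinely needs --- is strictly weaker: the Faber--Krahn inequality of Lemma~\ref{poin->FK}, $c_0\,\mathcal{E}^\mu_{\mathbb{R}^N}(v,v)\geqslant|\Omega_0|^{-\alpha/N}\int_{\Omega_0}|v|^2dx$ for $v$ supported in $\Omega_0$, which is an $L^1\to L^2$ (Nash-type) gain obtained from (\ref{A-poin}) by a Besicovitch covering and an optimization in the mollification radius. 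Applied to $v=\widetilde w_n\zeta_n$ with $\Omega_0=\widetilde B_n\cap\{u\geqslant\widetilde k_n\}$, and combined with the measure estimate $|B_n\cap\{u\geqslant\widetilde k_n\}|\leqslant(2^{n+2}/k)^2\int_{B_n}(u-k_n)^2dx$, it yields exactly the recursion $Y_{n+1}\leqslant\gamma\delta^{-1/2}b^nY_n^{1+\alpha/N}/k^{2\alpha/N}\cdots$ that you want, with no exponent $2^*>2$ ever entering. So the gap is concrete --- the claimed Sobolev--Poincar\'e upgrade is neither proved nor available from the hypotheses --- but it is repairable by replacing it with Lemma~\ref{poin->FK}, after which the rest of your iteration goes through as written.
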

Throughout this article, we refer to $\{\mathfrak{C}_1,\mathfrak{C}_2,N,\alpha\}$ as {\it data}. In various estimates, we will use $\gamma$ as a generic constant.

\subsection{Novelty and significance}
The local regularity theory for elliptic equations driven by non-local operators has become a highly active field of research in recent years. In the article~\cite{kass1}, the authors considered a kind of general integro-differential operators with kernels of measure which are allowed to be singular. They established regularity results including a weak Harnack inequality for super-solutions and H\"older regularity for globally bounded weak solutions of (\ref{equa}). On general metric measure spaces with the volume doubling property, Harnack inequalities and H\"older estimates of elliptic and parabolic equations were established in~\cite{chen-kuma1,chen-kuma2,hu-yu} in connection with Dirichlet forms.
\par
Our study is motivated by \cite{kass1} and clarifies some assumptions on the measures $(\mu(x,\cdot))_{x\in\mathbb{R}^N}$ that lead to regularity estimates. In particular, we find that a Poincar\'e-type inequality and a decay estimate of the measures are sufficient for H\"older regularity and weak  Harnack estimates to hold. In addition, if we assume a so-called (\ref{UJS}) condition, then Harnack estimates follow. This kind of simplification is probably known to experts. However, we do not find it in print.
\par
Different from the Moser-type approach adopted in \cite{kass1}, the conclusion in this paper mainly follows from DeGiorgi's approach, developed by DiBenedetto, Gianazza and Vespri in local settings. We also follow some ideas by Liao in \cite{liao2} to complete our proofs. This approach differs from the Moser iteration and circumvents a large amount of algebraic inequalities. In fact, we find that the regularity properties are encoded in the energy estimates in Proposition~\ref{DG-class} and logarithmic estimate in Proposition~\ref{log-esti}.
\par
In regularity theory one of the essential tools is the so-called \textit{expansion of positivity}, which translates measure theoretical estimates of positivity into a pointwise one. Inspired by \cite{casrto}, a logarithmic estimate plays an important role in the derivation of the expansion of positivity, which is the key step for obtaining H\"older estimate and weak Harnack estimate. The H\"older regularity (Theorem~\ref{Holder}) provides an extension of the analogous result in~\cite{kass1} to the general measure framework as it weakens the conditions on the measure $\mu$ and manages the tail.
\par
Theorem~\ref{weak-harn} concerns a weak Harnack inequality. Its proof hinges on an improved expansion of positivity, avoiding crossover estimates based on BMO estimates of $\log\,u$. A classic method to prove the improved expansion of positivity lemma is to apply Krylov-Safanov covering arguments, see for instance \cite{cozzi,casrto1}. This dates back to the pioneering work of DiBenedetto and Trudinger \cite{dibe-trud}. In our work, we provide an alternative approach via the clustering technique of DiBenedetto, Gianazza and Vespri stated in Lemma~\ref{cluster}. This more geometric approach has the potential to be generalized to study different nonlinear, nonlocal operators, also parabolic ones, See in this connection \cite{cassa}.
\par
To establish a full Harnack inequality, we additionally assume that $\mu$ can be represented as in (\ref{mu}) and satisfies (\ref{UJS}). A key step towards the Harnack inequality is a boundedness estimate stated in Theorem~\ref{bounded2}. Joining the boundedness estimate and the weak Harnack inequality, we conclude a full Harnack inequality via an interpolation argument. To the best of the author's knowledge, the (\ref{UJS}) assumption was first introduced by Barlow, Bass and Kumagai in \cite{barlow}, where it appeared in the setting of discrete graphs. See \cite{chen-kim} for a general setting of metric measure spaces. Our (\ref{UJS}) assumption is a localized version, which is also used by Schulze in \cite{schu}.
\subsection{Structure of the paper}
This article is structured as follows. In Section~2, we provide some preliminary results. In Section~3 we deduce energy estimates for local weak sub(super)-solutions and an expansion of positivity lemma for local weak super-solutions. Section~4 provides the proof of H\"older regularity, whereas the weak Harnack inequality is proved in Section~5. In Section~6, we prove local boundedness estimates for weak solutions or sub-solutions. Finally, in Section~7 we develop a full Harnack inequality.
\begin{acknowledgment}
The author completed this paper during a research stay at Fachbereich Mathematik, Universit\"at Salzburg. The author thanks Prof. B\"ogelein and Dr. Liao for helpful discussions on the subject of this paper. The author thanks Dr. Weidner for discussing technical details related to Faber-Krahn inequality.
\end{acknowledgment}
\section{Auxiliary results}\label{notations}
The purpose of this section is to provide several implicant of (\ref{S}), (\ref{A-poin}) and (\ref{D}). In addition, we establish a clustering lemma. The fact that the Poincar\'e inequality implies the Faber-Krahn inequality is well-known in local settings; see for example \cite{sal}. However, we do not find a detailed proof for our purpose in print. For the convenience of readers, we state and prove the relative lemma applicable to this paper.
\begin{lemma}\label{poin->FK}
If (\ref{A-poin}) holds, $v\in H_{\mathrm{loc}}^\mu(B_\rho)$ and $\mathrm{supp}\,v\subset\Omega_0\subset B_\rho$, then there exists a constant $c_0$ only depending on $\mathfrak{C}_1,N$ and $\alpha$, such that
\begin{equation}\label{FK}
c_0\mathcal{E}_{\mathbb{R}^N}^{\mu}(v,v)\geqslant |\Omega_0|^{-\frac{\alpha}{N}}\int_{\Omega_0}|v|^2\,dx.\tag{FK}
\end{equation}
\end{lemma}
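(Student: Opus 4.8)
\noindent\textbf{Proof strategy for Lemma~\ref{poin->FK}.}
The plan is to deduce \eqref{FK} from the Poincar\'e inequality \eqref{A-poin} by a covering argument at the scale $|\Omega_0|^{1/N}$: I will cover $\Omega_0$ by balls of radius comparable to $|\Omega_0|^{1/N}$ with overlap bounded by a dimensional constant, note that on each such ball $\Omega_0$ occupies at most half of the volume, and use this to turn \eqref{A-poin} on each ball into a direct bound on $\int v^2$. Summing then reproduces $\int_{\Omega_0}v^2$ on the left and, because of the bounded overlap, only a dimensional multiple of $\mathcal{E}^\mu_{\mathbb{R}^N}(v,v)$ on the right. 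This is the nonlocal analogue of the classical implication ``Poincar\'e $\Rightarrow$ Nash/Sobolev $\Rightarrow$ Faber--Krahn'' (cf.~\cite{sal}), arranged so as to land on \eqref{FK} directly; if $\mathcal{E}^\mu_{\mathbb{R}^N}(v,v)=\infty$ there is nothing to prove, so we may assume it is finite.

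The elementary single-ball step is as follows. Let $B=B_t(z)\subset E$ be any ball with $|\Omega_0\cap B|\leqslant\tfrac12|B|$. Writing $[v]_B=\fint_B v$, one has the identity
\[
\int_B v^2\,dx=\int_B\big(v-[v]_B\big)^2\,dx+|B|\,[v]_B^2 .
\]
Since $\mathrm{supp}\,v\subset\Omega_0$, the Cauchy--Schwarz inequality gives $|[v]_B|\leqslant |B|^{-1}|\Omega_0\cap B|^{1/2}\big(\int_B v^2\big)^{1/2}$, hence $|B|\,[v]_B^2\leqslant\frac{|\Omega_0\cap B|}{|B|}\int_B v^2\leqslant\tfrac12\int_B v^2$. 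Absorbing this term and invoking \eqref{A-poin} on $B$ together with $\mathcal{E}^\mu_B(v,v)\leqslant\mathcal{E}^\mu_{\mathbb{R}^N}(v,v)$ yields
\[
\int_B v^2\,dx\leqslant 2\mathfrak{C}_1\, t^\alpha\,\mathcal{E}^\mu_{\mathbb{R}^N}(v,v).
\]

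Next I would fix the scale $t_\ast:=\big(2|\Omega_0|/|B_1|\big)^{1/N}$, so that $|\Omega_0|=\tfrac12|B_{t_\ast}|$, and cover $\Omega_0$ by the balls $B_i:=B_{\sqrt N\,t_\ast}(z_i)$ with $z_i$ ranging over the lattice $t_\ast\mathbb{Z}^N$. These cover $\mathbb{R}^N$, their overlap is bounded by a constant $M_N$ depending only on $N$, and each obeys $|\Omega_0\cap B_i|\leqslant|\Omega_0|=\tfrac12|B_{t_\ast}|\leqslant\tfrac12|B_i|$. Applying the single-ball step to every $B_i$ and summing, the left sides add up to at least $\int_{\Omega_0}v^2\,dx$ (the $B_i$ cover $\mathrm{supp}\,v$), while on the right, by Tonelli's theorem and the bounded overlap,
\[
\sum_i\mathcal{E}^\mu_{B_i}(v,v)=\int_{\mathbb{R}^N}\!\!\int_{\mathbb{R}^N}\Big(\sum_i\mathbf{1}_{B_i}(x)\mathbf{1}_{B_i}(y)\Big)(v(y)-v(x))^2\,\mu(x,dy)\,dx\leqslant M_N\,\mathcal{E}^\mu_{\mathbb{R}^N}(v,v).
\]
Since $(\sqrt N\,t_\ast)^\alpha$ equals a dimensional constant times $|\Omega_0|^{\alpha/N}$, this is precisely \eqref{FK} with $c_0=c_0(\mathfrak{C}_1,N,\alpha)$.

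The heart of the matter is the choice of covering scale: it is exactly the requirement that every covering ball have radius at least $t_\ast$ that forces the density of $\Omega_0$ in each of them below $\tfrac12$, which is what makes the absorption work and produces the correct power $|\Omega_0|^{\alpha/N}$. The only genuinely technical nuisance is that \eqref{A-poin} is available only on balls contained in $E$, so one must ensure the $B_i$ can be taken inside $E$: when $t_\ast\gtrsim\rho$ the conclusion already follows from the single-ball step applied to $B_\rho$ (or a fixed dilate of it), and when $t_\ast\ll\rho$ the balls $B_i$ that meet $\Omega_0\subset B_\rho$ lie inside a mildly enlarged ball still contained in $E$; alternatively one simply reads the hypothesis as $\overline{B_\rho}\subset E$ with room to spare. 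Everything else is Cauchy--Schwarz plus \eqref{A-poin}, and the nonlocality is absorbed entirely into the estimate $\sum_i\mathcal{E}^\mu_{B_i}\leqslant M_N\mathcal{E}^\mu_{\mathbb{R}^N}$, valid because each pair $(x,y)$ lies in at most $M_N$ of the product balls $B_i\times B_i$.
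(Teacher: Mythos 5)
Your argument is correct, and it reaches \eqref{FK} by a genuinely different route from the paper. The paper first proves a Nash-type inequality
\[
\|f\|_{L^2(B_\rho)}^{2(1+\frac{\alpha}{N})}\leqslant\gamma\,\|f\|_{L^1(B_\rho)}^{\frac{2\alpha}{N}}\,\mathcal{E}_{\mathbb{R}^N}^\mu(f,f)
\]
by comparing $f$ with its mollification $f_r$, estimating $\|f-f_r\|_{L^2}^2$ via \eqref{A-poin} on a Besicovitch cover at scale $r$, bounding $\|f_r\|_{L^2}^2$ by $|B_r|^{-1}\|f\|_{L^1}^2$, and optimizing over $r$; the Faber--Krahn bound then follows by H\"older's inequality $\|v\|_{L^1(\Omega_0)}\leqslant|\Omega_0|^{1/2}\|v\|_{L^2(\Omega_0)}$. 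You instead work at the single critical scale $t_\ast\sim|\Omega_0|^{1/N}$ from the outset, replacing the mollification step by the mean-value absorption $|B|\,[v]_B^2\leqslant\frac{|\Omega_0\cap B|}{|B|}\int_Bv^2\leqslant\frac12\int_Bv^2$, which converts \eqref{A-poin} on each covering ball directly into $\int_Bv^2\leqslant 2\mathfrak{C}_1t^\alpha\mathcal{E}^\mu_{\mathbb{R}^N}(v,v)$; summing with bounded overlap finishes the proof. Your version is shorter and avoids both the optimization in $r$ and the $L^1$--$L^2$ interpolation, at the cost of not producing the Nash inequality as an intermediate result (which the paper also does not reuse elsewhere). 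Both proofs share the same mild imprecision of invoking \eqref{A-poin} on covering balls that need not lie in $E$ --- the paper's Besicovitch cover exhausts all of $\mathbb{R}^N$ --- and you at least flag and handle this point explicitly, including the boundary case $t_\ast\gtrsim\rho$, where one should note that $\Omega_0\subset B_\rho$ forces $t_\ast\leqslant 2^{1/N}\rho$, so a single fixed dilate of $B_\rho$ suffices and $\rho^\alpha\lesssim|\Omega_0|^{\alpha/N}$ gives the right constant.
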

\begin{proof}
Let $f\in H_{\text{loc}}^\mu(B_\rho)$ satisfy $\text{supp}\,f\subset B_\rho$ and set $f_r(x):=\fint_{B_r(x)}f(y)dy$ for any $r>0$ and $x\in\mathbb{R}^N$. It follows from triangle's inequality that
\begin{equation}\label{est5-1}
\|f\|_{L^2(B_\rho)}^2\leqslant2\|f-f_r\|_{L^2(B_\rho)}^2+2\|f_r\|_{L^2(B_\rho)}^2.
\end{equation}
Fix $r>0$ and let $\{x_n\}_{n\in\mathbb{N}}$ be a countable set in $\mathbb{R}^N$ satisfying
\begin{equation*}
\bigcup_{n=1}^\infty B_r(x_n)=\mathbb{R}^N \quad\text{and}\quad \sum_{n\in\mathbb{N}}{\rchi}_{B_{2r}(x_n)}(y)\leqslant\theta_N,~\forall y\in\mathbb{R}^N,
\end{equation*}
where $\theta_N$ depends only on $N$. Besicovitch's covering theorem ensures that there exists such a countable set $\{x_n\}$.
Then, by (\ref{A-poin}) we have
\begin{align}\label{est5-2}
\|f-f_r\|_{L^2(B_\rho)}^2
=&~|B_r|^{-2}\int_{B_\rho}\Big(\int_{B_r(x)}|f(x)-f(y)|\,dy\Big)^2dx\nonumber\\
\leqslant&~|B_r|^{-2}\sum_{n=1}^\infty\int_{B_r(x_n)}\Big(\int_{B_r(x)}|f(x)-f(y)|\,dy\Big)^2dx\nonumber\\
\leqslant&~|B_r|^{-2}\sum_{n=1}^\infty\int_{B_r(x_n)}\Big(\int_{B_{2r}(x_n)}|f(x)-f(y)|\,dy\Big)^2dx\nonumber\\
\leqslant&~\mathfrak{C}_1|B_r|^{-2}|B_{2r}|^2\sum_{n=1}^\infty(2r)^{\alpha}\mathcal{E}_{B_{2r}(x_n)}^\mu(f,f)\nonumber\\
\leqslant&~\gamma(\mathfrak{C}_1,N,\alpha)r^{\alpha}\mathcal{E}_{\mathbb{R}^N}^\mu(f,f).
\end{align}
For the second term on the right-hand side of (\ref{est5-1}), it is easy to verify that
\begin{equation*}
\|f_r\|_{L^2(B_\rho)}^2\leqslant\|f_r\|_{L^1(B_\rho)}\|f_r\|_{L^\infty(B_\rho)}.
\end{equation*}
Let us estimate the right-hand side separately. First, since $\text{supp}\,f\subset B_\rho$, we have
\begin{equation*}
\|f_r\|_{L^\infty(B_\rho)}=\sup_{x\in B_\rho}\bigg|\frac{1}{|B_r|}\int_{B_r(x)}f(y)\,dy\bigg|\leqslant\frac{1}{|B_r|}\int_{B_\rho}|f(y)|\,dy.
\end{equation*}
Next, using a similar covering argument as (\ref{est5-2}), we have
\begin{align*}
\int_{B_\rho}\bigg|\frac{1}{|B_r|}\int_{B_r(x)}f(y)\,dy\bigg|dx\,\leqslant&~\sum_{n=1}^\infty\int_{B_r(x_n)}\frac{1}{|B_r|}\int_{B_r(x)}|f(y)|\,dydx\\
\leqslant&~\sum_{n=1}^\infty\int_{B_r(x_n)}\frac{1}{|B_r|}\int_{B_{2r}(x_n)}|f(y)|\,dydx\\
\leqslant&~\gamma(N)\int_{B_\rho}|f(y)|\,dy.
\end{align*}
Therefore, we acquire
\begin{equation}\label{est5-3}
\|f_r\|_{L^2(B_\rho)}^2\leqslant\gamma(N)|B_r|^{-1}\|f\|_{L^1(B_\rho)}^2.
\end{equation}
Inserting (\ref{est5-2}) and (\ref{est5-3}) into (\ref{est5-1}), we get
\begin{equation*}
\|f\|_{L^2(B_\rho)}^2\leqslant\gamma(\mathfrak{C}_1,N,\alpha)\left(r^{\alpha}\mathcal{E}_{\mathbb{R}^N}^\mu(f,f)+|B_r|^{-1}\|f\|_{L^1(B_\rho)}^2\right)
\end{equation*}
for every $r>0$. Next, choose $r$ such that
\begin{equation*}
r^{\alpha}\mathcal{E}_{\mathbb{R}^N}^\mu(f,f)=|B_r|^{-1}\|f\|_{L^1(B_\rho)}^2
\end{equation*}
i.e.
\begin{equation*}
r^{N+\alpha}=\frac{\|f\|_{L^1(B_\rho)}^2}{|B_1|\mathcal{E}_{\mathbb{R}^N}^\mu(f,f)}.
\end{equation*}
As a result, we obtain that
\begin{equation*}
\|f\|_{L^2(B_\rho)}^2\leqslant\gamma(\mathfrak{C}_1,N,\alpha)\|f\|_{L^1(B_\rho)}^{\frac{2\alpha}{N+\alpha}}\mathcal{E}_{\mathbb{R}^N}^\mu(f,f)^{\frac{N}{N+\alpha}},
\end{equation*}
which we can also write as
\begin{equation}\label{est5-4}
\|f\|_{L^2(B_\rho)}^{2(1+\frac{\alpha}{N})}\leqslant\gamma(\mathfrak{C}_1,N,\alpha)\|f\|_{L^1(B_\rho)}^{\frac{2\alpha}{N}}\mathcal{E}_{\mathbb{R}^N}^\mu(f,f).
\end{equation}
\par
Now, let $v\in H_{\text{loc}}^\mu(B_\rho)$ satisfy $\text{supp}\,v\subset\Omega_0\subset B_\rho$. Using H\"older's inequality $\|v\|_{L^1(\Omega_0)}\leqslant|\Omega_0|^{\frac{1}{2}}\|v\|_{L^2(\Omega_0)}$, we apply (\ref{est5-4}) to $v$ and obtain that
\begin{align*}
\|v\|_{L^2(\Omega_0)}^{2(1+\frac{\alpha}{N})}=\|v\|_{L^2(B_\rho)}^{2(1+\frac{\alpha}{N})}\leqslant&~\gamma\|v\|_{L^1(B_\rho)}^{\frac{2\alpha}{N}}\mathcal{E}_{\mathbb{R}^N}^\mu(v,v)\\
=&~\gamma\|v\|_{L^1(\Omega_0)}^{\frac{2\alpha}{N}}\mathcal{E}_{\mathbb{R}^N}^\mu(v,v)\\
\leqslant&~\gamma|\Omega_0|^{\frac{\alpha}{N}}\|v\|_{L^2(\Omega_0)}^{\frac{2\alpha}{N}}\mathcal{E}_{\mathbb{R}^N}^\mu(v,v)
\end{align*}
for some $\gamma=\gamma(\mathfrak{C}_1,N,\alpha)$. After simplification, we get
\begin{equation*}
\|v\|_{L^2(\Omega_0)}^2\leqslant\gamma(\mathfrak{C}_1,N,\alpha)|\Omega_0|^{\frac{\alpha}{N}}\mathcal{E}_{\mathbb{R}^N}^\mu(v,v)
\end{equation*}
which is (\ref{FK}) for $c_0=\gamma(\mathfrak{C}_1,N,\alpha)$. Since $v$ was arbitrary, the desired result follows.
\end{proof}
\begin{remark}\upshape
It is worthy to mention that a generalized Faber-Krahn inequality, which was introduced in~\cite{chen-kuma1,chen-kuma2,kass5}, can play an equally effective role as (\ref{FK}) in the regularity analysis of integro-differential equations. Particularly, it can be stated as follows:\\
There exist $c_0,s>0$, such that for all $\Omega_0\subset B_\rho(x_0)$ and every $v\in H_{\text{loc}}^\mu(B_\rho(x_0))$ with $\text{supp}\,v\subset\Omega_0$,
\begin{equation*}
c_0\mathcal{E}_{\mathbb{R}^N}^{\mu}(v,v)\geqslant \rho^{-\alpha}\bigg(\frac{|B_\rho(x_0)|}{|\Omega_0|}\bigg)^s\int_{\Omega_0}|v|^2\,dx.
\end{equation*}
\end{remark}
In this paper, we often do not directly use (\ref{D}) when we estimate measures, instead we use the following corollary.
\begin{lemma}\label{D2}
If (\ref{D}) holds, then for every $0<r_1<r_2$,
\begin{equation*}
\sup_{x\in B_{r_1}(x_0)}\int_{\mathbb{R}^N\backslash B_{r_2}(x_0)}\mu(x,dy)\leqslant\frac{\mathfrak{C}_2}{(r_2-r_1)^\alpha}.
\end{equation*}
\end{lemma}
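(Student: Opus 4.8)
The plan is to reduce the statement for an arbitrary center $x\in B_{r_1}(x_0)$ to the hypothesis (\ref{D}) applied at that same center $x$, exploiting only the triangle inequality and monotonicity of the measure.

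First I would observe the elementary inclusion of balls: if $x\in B_{r_1}(x_0)$, then $B_{r_2-r_1}(x)\subseteq B_{r_2}(x_0)$. Indeed, for any $z$ with $|z-x|<r_2-r_1$ the triangle inequality gives $|z-x_0|\leqslant|z-x|+|x-x_0|<(r_2-r_1)+r_1=r_2$. Passing to complements, this yields
\[
\mathbb{R}^N\setminus B_{r_2}(x_0)\subseteq\mathbb{R}^N\setminus B_{r_2-r_1}(x).
\]

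Next, since $\mu(x,\cdot)$ is a (nonnegative Borel) measure, monotonicity together with the inclusion above gives, for each fixed $x\in B_{r_1}(x_0)$,
\[
\int_{\mathbb{R}^N\setminus B_{r_2}(x_0)}\mu(x,dy)\leqslant\int_{\mathbb{R}^N\setminus B_{r_2-r_1}(x)}\mu(x,dy)\leqslant\frac{\mathfrak{C}_2}{(r_2-r_1)^\alpha},
\]
where in the last step I apply (\ref{D}) with base point $x$ and radius $r=r_2-r_1>0$. Taking the supremum over $x\in B_{r_1}(x_0)$ on the left-hand side (the right-hand side being independent of $x$) produces exactly the claimed bound.

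I do not expect any genuine obstacle here: the only inputs are the triangle inequality, monotonicity of measures under set inclusion, and a single invocation of (\ref{D}) at the shifted center. The one point worth stating explicitly is that (\ref{D}) is assumed to hold for \emph{every} $x_0\in\mathbb{R}^N$, so applying it at the moving point $x$ is legitimate and the constant $\mathfrak{C}_2$ is the same uniform constant.
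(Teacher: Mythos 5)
Your proof is correct and coincides with the paper's own argument: both establish the inclusion $\mathbb{R}^N\setminus B_{r_2}(x_0)\subseteq\mathbb{R}^N\setminus B_{r_2-r_1}(x)$ for $x\in B_{r_1}(x_0)$ via the triangle inequality, apply (\ref{D}) at the shifted center $x$ with radius $r_2-r_1$, and take the supremum. No gaps.
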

\begin{proof}
Pick $x\in B_{r_1}(x_0)$.
Since $r_1<r_2$, there has
$$\mathbb{R}^N\backslash B_{r_2}(x_0)\subset\mathbb{R}^N\backslash B_{r_2-r_1}(x).$$
Therefore, by using (\ref{D}), we have
\begin{equation*}
\int_{\mathbb{R}^N\backslash B_{r_2}(x_0)}\mu(x,dy)\leqslant\int_{\mathbb{R}^N\backslash B_{r_2-r_1}(x)}\mu(x,dy)\leqslant\frac{\mathfrak{C}_2}{(r_2-r_1)^\alpha}.
\end{equation*}
The proof is completed by the arbitrariness of $x$.
\end{proof}
The following lemma indicates that the assumption (\ref{D}) implies the local behavior of the measure $\mu$ under a weight of quadratic functions. The proof is inspired by \cite[Lemma~2.1]{kass6}.
\begin{lemma}\label{D->B}
If (\ref{D}) holds, then for every $r>0$, there holds
\begin{equation*}\label{B}
\frac{1}{r^2}\sup_{x\in\mathbb{R}^N}\int_{B_r(x)}|x-y|^2\mu(x,dy)\leqslant\gamma(\mathfrak{C}_2,\alpha)\frac{1}{r^\alpha}.
\end{equation*}
\begin{proof}
By using (\ref{D}), we have
\begin{align*}
\int_{B_r(x)}|x-y|^2\mu(x,dy)&=\sum_{n=0}^{\infty}\int_{B_{2^{-n}r}(x)\backslash B_{2^{-n-1}r}(x)}|x-y|^2\mu(x,dy)\\
&\leqslant\sum_{n=0}^{\infty}(2^{-n}r)^2\int_{B_{2^{-n}r}(x)\backslash B_{2^{-n-1}r}(x)}\mu(x,dy)\\
&\leqslant\mathfrak{C}_2\sum_{n=0}^{\infty}(2^{-n}r)^2(2^{-n-1}r)^{-\alpha}\\
&=\mathfrak{C}_2 r^{2-\alpha}\sum_{n=0}^{\infty}2^{-2n+\alpha n+\alpha}\\
&=\gamma(\mathfrak{C}_2,\alpha)r^{2-\alpha}.
\end{align*}
Since $x$ is arbitrary, the desired result follows.
\end{proof}
\end{lemma}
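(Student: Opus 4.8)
The plan is to exploit the quadratic weight $|x-y|^2$, which is small near the diagonal, in order to trade the (possibly infinite) total mass of $\mu(x,\cdot)$ on $B_r(x)$ against the decay bound (\ref{D}). Fix $x\in\mathbb{R}^N$ and $r>0$. I would decompose the punctured ball into dyadic annuli,
\[
B_r(x)\setminus\{x\}=\bigcup_{n=0}^{\infty}A_n,\qquad A_n:=B_{2^{-n}r}(x)\setminus B_{2^{-n-1}r}(x),
\]
noting that the singleton $\{x\}$ contributes nothing to $\int_{B_r(x)}|x-y|^2\,\mu(x,dy)$ since the integrand vanishes there. On $A_n$ one has the crude bound $|x-y|^2\leqslant(2^{-n}r)^2$, while the $\mu(x,\cdot)$-mass of $A_n$ is controlled by (\ref{D}) applied with center $x$ and radius $2^{-n-1}r$:
\[
\mu(x,A_n)\leqslant\int_{\mathbb{R}^N\setminus B_{2^{-n-1}r}(x)}\mu(x,dy)\leqslant\frac{\mathfrak{C}_2}{(2^{-n-1}r)^\alpha}.
\]

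Summing over $n$ then gives
\[
\int_{B_r(x)}|x-y|^2\,\mu(x,dy)\leqslant\sum_{n=0}^{\infty}(2^{-n}r)^2\,\frac{\mathfrak{C}_2}{(2^{-n-1}r)^\alpha}=\mathfrak{C}_2\,2^{\alpha}\,r^{2-\alpha}\sum_{n=0}^{\infty}2^{-(2-\alpha)n}.
\]
The series converges precisely because $\alpha\in(0,2)$, and its value depends only on $\alpha$; hence the right-hand side equals $\gamma(\mathfrak{C}_2,\alpha)\,r^{2-\alpha}$. Dividing by $r^2$ and taking the supremum over $x\in\mathbb{R}^N$ — the bound being uniform in $x$ — yields the claim.

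There is essentially no hard step here: the argument is a one-line dyadic decomposition plus a geometric series, so the role of the lemma is bookkeeping rather than depth. The only points that deserve a word of care are (i) that the weight $|x-y|^2$ annihilates any atom $\mu(x,\{x\})$, so working on the punctured ball is legitimate, and (ii) that summability of the geometric series is exactly the role played by the restriction $\alpha<2$; for $\alpha\geqslant 2$ the weight would no longer be $\mu(x,\cdot)$-integrable near the diagonal under (\ref{D}) alone. An essentially equivalent alternative is a layer-cake computation: write $\int_{B_r(x)}|x-y|^2\,\mu(x,dy)=\int_0^{r^2}\mu\big(x,\{y\in B_r(x):|x-y|^2>t\}\big)\,dt$, bound the inner measure by $\mathfrak{C}_2\,t^{-\alpha/2}$ via (\ref{D}), and integrate to get $\int_0^{r^2}\mathfrak{C}_2\,t^{-\alpha/2}\,dt=\tfrac{\mathfrak{C}_2}{1-\alpha/2}\,r^{2-\alpha}$, where again $\alpha<2$ is what ensures convergence at $t=0$.
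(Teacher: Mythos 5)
Your argument is correct and is essentially identical to the paper's proof: the same dyadic annular decomposition, the same bound $|x-y|^2\leqslant(2^{-n}r)^2$ on each annulus, the same application of (\ref{D}) at radius $2^{-n-1}r$, and the same geometric series using $\alpha<2$. The remarks about the atom at $x$ and the layer-cake alternative are fine but do not change the substance.
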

\par
Now we verify two properties of $\mu$ which satisfies (\ref{mu}) and (\ref{UJS}). The first property is stated as follows.
\begin{lemma}\label{ujs-control}
Let $\mu$ be represented by (\ref{mu}) and satisfy (\ref{UJS}). Then, for any non-negative function $v:\mathbb{R}^N\rightarrow\mathbb{R}$ and any $r>0,\rho\in(0,r/2)$, there holds
\begin{equation*}
\rho^N\sup_{x\in B_r}\int_{\mathbb{R}^N\backslash B_{r+\rho}}v(y)\,\mu(x,dy)\leqslant\gamma\int_{B_{r+\frac{\rho}{2}}}\int_{\mathbb{R}^N\backslash B_{r+\rho}}v(y)\,\mu(x,dy)dx,
\end{equation*}
where the constant $\gamma$ depends only on $N$ and $\hat{c}$.
\end{lemma}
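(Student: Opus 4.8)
The plan is to invoke (\ref{UJS}) with the radius $\rho/2$, chosen so that $B_{\rho/2}(x)$ is swallowed by the slightly larger ball $B_{r+\rho/2}$ while still being admissible in the (\ref{UJS}) inequality. The only thing to check before applying (\ref{UJS}) is a distance bound: if $x\in B_r$ and $y\in\mathbb{R}^N\setminus B_{r+\rho}$, then by the triangle inequality $|x-y|\geqslant|y|-|x|\geqslant(r+\rho)-r=\rho$, so $\rho/2\leqslant|x-y|/2$ and (\ref{UJS}) is legitimately applicable with radius $\rho/2$. This is the one place where a small amount of care is needed, but it is immediate; there is no genuine obstacle in this lemma.

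Next, fix $x\in B_r$ and $y\in\mathbb{R}^N\setminus B_{r+\rho}$. By (\ref{UJS}) with radius $\rho/2$,
\[
K(x,y)\leqslant\hat{c}\fint_{B_{\rho/2}(x)}K(z,y)\,dz=\frac{\hat{c}}{|B_{\rho/2}|}\int_{B_{\rho/2}(x)}K(z,y)\,dz.
\]
Since $x\in B_r$ and $\rho/2<r$, we have $B_{\rho/2}(x)\subset B_{r+\rho/2}$, and $K\geqslant0$, hence
\[
K(x,y)\leqslant\frac{\hat{c}}{|B_{\rho/2}|}\int_{B_{r+\rho/2}}K(z,y)\,dz.
\]

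Now multiply by $v(y)\geqslant0$ and integrate over $y\in\mathbb{R}^N\setminus B_{r+\rho}$; since all integrands are non-negative, Tonelli's theorem permits interchanging the order of integration, giving
\[
\int_{\mathbb{R}^N\setminus B_{r+\rho}}v(y)\,\mu(x,dy)\leqslant\frac{\hat{c}}{|B_{\rho/2}|}\int_{B_{r+\rho/2}}\int_{\mathbb{R}^N\setminus B_{r+\rho}}v(y)\,\mu(z,dy)\,dz.
\]
The right-hand side is independent of $x$, so taking the supremum over $x\in B_r$ and then multiplying by $\rho^N$, and using $|B_{\rho/2}|=|B_1|(\rho/2)^N=|B_1|\,2^{-N}\rho^N$, yields the claim with $\gamma=2^N\hat{c}/|B_1|$, which depends only on $N$ and $\hat{c}$. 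The argument is elementary once (\ref{UJS}) is in place; the essential point is simply the combination of the pointwise (\ref{UJS}) control with Tonelli's theorem.
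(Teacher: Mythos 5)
Your proof is correct and follows essentially the same route as the paper: apply (\ref{UJS}) with radius $\rho/2$ (legitimate since $|x-y|\geqslant\rho$ for $x\in B_r$, $y\notin B_{r+\rho}$), enlarge $B_{\rho/2}(x)$ to $B_{r+\rho/2}$, and interchange the order of integration by Tonelli. Your explicit verification of the distance condition is a welcome bit of care that the paper leaves implicit.
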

\begin{proof}
Due to (\ref{mu}) and (\ref{UJS}), by Fubini's theorem we have
\begin{align*}
&\rho^N\sup_{x\in B_r}\int_{\mathbb{R}^N\backslash B_{r+\rho}}v(y)\,\mu(x,dy)\\
&~\qquad=\rho^N\sup_{x\in B_r}\int_{\mathbb{R}^N\backslash B_{r+\rho}}v(y)K(x,y)\,dy\\
&~\qquad\leqslant\gamma(N)2^N\hat{c}\sup_{x\in B_r}\int_{\mathbb{R}^N\backslash B_{r+\rho}}v(y)\bigg(\int_{B_{\rho/2}(x)}K(z,y)\,dz\bigg)dy\\
&~\qquad\leqslant\gamma\int_{\mathbb{R}^N\backslash B_{r+\rho}}v(y)\bigg(\int_{B_{r+\rho/2}}K(x,y)\,dx\bigg)dy\\
&~\qquad=\gamma\int_{B_{r+\rho/2}}\int_{\mathbb{R}^N\backslash B_{r+\rho}}v(y)K(x,y)\,dydx\\
&~\qquad=\gamma\int_{B_{r+\rho/2}}\int_{\mathbb{R}^N\backslash B_{r+\rho}}v(y)\,\mu(x,dy)dx
\end{align*}
for some $\gamma=\gamma(N,\hat{c})$.
\end{proof}
The second property is that a point-wise upper bound condition follows from (\ref{D}) and (\ref{UJS}).
\begin{lemma}\label{point-upper}
Let $\mu$ be represented by (\ref{mu}) and satisfy (\ref{D}), (\ref{UJS}). Then, there exists a constant $\gamma\geqslant 1$ depending only on $\mathfrak{C}_2,N,\alpha$ and $\hat{c}$, such that
\begin{equation*}
K(x,y)\leqslant \gamma\frac{1}{|x-y|^{N+\alpha}}\quad\text{for almost all}~~x,y\in\mathbb{R}^N.
\end{equation*}
\end{lemma}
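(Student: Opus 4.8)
The plan is to apply (\ref{UJS}) at the single well-chosen radius $r=|x-y|/2$ and then turn the resulting averaged integral into a quantity that (\ref{D}) controls directly, using the symmetry $K(z,y)=K(y,z)$. Fix $x\neq y$ in $\mathbb{R}^N$ and put $\rho:=|x-y|/2$, which is admissible in (\ref{UJS}) since $\rho\leqslant|x-y|/2$. Then (\ref{UJS}) gives
\[
K(x,y)\leqslant\hat{c}\fint_{B_\rho(x)}K(z,y)\,dz=\frac{\hat{c}}{|B_\rho|}\int_{B_\rho(x)}K(z,y)\,dz.
\]

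Next I would rewrite the right-hand integral. By the symmetry of $K$ and the representation (\ref{mu}), $\int_{B_\rho(x)}K(z,y)\,dz=\int_{B_\rho(x)}K(y,z)\,dz=\int_{B_\rho(x)}\mu(y,dz)$. The key geometric observation is the inclusion $B_\rho(x)\subset\mathbb{R}^N\backslash B_\rho(y)$: for $z\in B_\rho(x)$ the triangle inequality yields $|z-y|\geqslant|x-y|-|x-z|>2\rho-\rho=\rho$. Hence, applying (\ref{D}) with center $y$ and radius $\rho$,
\[
\int_{B_\rho(x)}\mu(y,dz)\leqslant\int_{\mathbb{R}^N\backslash B_\rho(y)}\mu(y,dz)\leqslant\frac{\mathfrak{C}_2}{\rho^\alpha}.
\]
Combining the two displays with $|B_\rho|=|B_1|\rho^N$ and $\rho=|x-y|/2$ gives
\[
K(x,y)\leqslant\frac{\hat{c}\,\mathfrak{C}_2}{|B_1|\,\rho^{N+\alpha}}=\frac{2^{N+\alpha}\hat{c}\,\mathfrak{C}_2}{|B_1|}\cdot\frac{1}{|x-y|^{N+\alpha}},
\]
so the claim holds with $\gamma:=\max\{1,\,2^{N+\alpha}\hat{c}\,\mathfrak{C}_2/|B_1|\}$, depending only on $\mathfrak{C}_2,N,\alpha$ and $\hat{c}$.

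There is no substantial obstacle here; the proof is a two-line computation once the right radius is chosen. The only points needing a little care are verifying that $r=|x-y|/2$ is permitted in (\ref{UJS}), and using the symmetry $K(z,y)=K(y,z)$ to recast the averaged integral as $\int_{B_\rho(x)}\mu(y,dz)$, which is exactly the shape to which (\ref{D}) applies after noting $B_\rho(x)\subset\mathbb{R}^N\backslash B_\rho(y)$. The phrase ``for almost all $x,y$'' in the statement merely reflects that $K$ is a density and hence defined up to a null set; on the set where (\ref{UJS}) is assumed to hold, the estimate is in fact pointwise.
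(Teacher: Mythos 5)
Your proof is correct. The first step coincides with the paper's: apply (\ref{UJS}) at the radius $r=|x-y|/2$ and use the symmetry of $K$ to recast the average as an integral against $\mu(y,dz)$. The second step, however, differs. The paper lower-bounds $|y-z|^2\geqslant r^2$ for $z\in B_r(x)$, enlarges the domain to $B_{3r}(y)$, and invokes the quadratic-moment estimate of Lemma~\ref{D->B} (itself a consequence of (\ref{D}) via a dyadic decomposition). You instead observe the inclusion $B_\rho(x)\subset\mathbb{R}^N\backslash B_\rho(y)$ and apply (\ref{D}) directly with center $y$ and radius $\rho$. Your route is slightly more economical, since it bypasses the auxiliary Lemma~\ref{D->B} entirely and uses only the raw decay hypothesis; the paper's route is no harder but routes through machinery it needs elsewhere anyway. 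Both yield the same constant up to dimensional factors, and your closing remarks on the admissibility of $r=|x-y|/2$ in (\ref{UJS}) and on the ``almost all'' qualifier are accurate.
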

\begin{proof}
For any $x,y\in\mathbb{R}^N$, set $r=\frac{|x-y|}{2}$. Obviously, it holds $B_r(x)\subset B_{3r}(y)$ and $r\leqslant|y-z|\leqslant 3r$ for every $z\in B_r(x)$. Therefore, by (\ref{D}), (\ref{UJS}) and the symmetry of $K$, we have
\begin{align*}
K(x,y)\leqslant&~\hat{c}\fint_{B_r(x)}K(z,y)dz\\
\leqslant&~\gamma r^{-N-2}\int_{B_r(x)}|y-z|^2K(z,y)dz\\
\leqslant&~\gamma r^{-N-2}\int_{B_{3r}(y)}|y-z|^2\mu(y,dz)\\
\leqslant&~\gamma |x-y|^{-N-\alpha},
\end{align*}
where $\gamma$ depends only on $\mathfrak{C}_2, N,\alpha$ and $\hat{c}$.
\end{proof}
As mentioned in the introduction, we provide an approach to the improved expansion of positivity lemma without applying the Krylov-Safanov covering argument. Instead, we pursue the approach of DiBenedetto, Gianazza and Vespri via a clustering lemma.
\par
In order to state our result, for all $x_0\in\mathbb{R}^N$ and all $\rho>0$ let
\[Q_\rho(x_0)=\prod_{i=1}^N\Big(x_0^i-\frac \rho 2,x_0^i+\frac \rho 2\Big)\]
be the $N$-dimensional open cube centered at $x_0$ with side length $\rho$.
\begin{lemma}[Clustering lemma]\label{cluster}
Let $\mu$ satisfy (\ref{A-poin}) and $M>0$. Assume $u\in H_{\mathrm{loc}}^\mu(Q_\rho(x_0))$ satisfies
\begin{itemize}
\item[(a)]$|\{u\geqslant M\}\cap Q_\rho(x_0)|>\beta|Q_\rho(x_0)|$ for some $\beta\in(0,1)$;
\item[(b)]$\displaystyle\int_{Q_\rho(x_0)}\int_{Q_\rho(x_0)}|u(x)-u(y)|^2\,\mu(x,dy)dx\leqslant \Lambda M^2\rho^{N-\alpha}$ for some $\Lambda>0$.
\end{itemize}
Then, for all $\delta,\lambda\in(0,1)$ there exist $x_1\in Q_\rho(x_0),\eta\in(0,1)$, such that
\begin{equation*}\label{clus}
|\{u>(1-\lambda) M\}\cap Q_{\eta\rho}(x_1)|>(1-\delta)|Q_{\eta\rho}(x_1)|,
\end{equation*}
where the constant $\eta$ depends only on $\mathfrak{C}_1,\alpha,\beta,\Lambda,\delta$ and $\lambda$.
\end{lemma}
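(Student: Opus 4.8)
The plan is to follow the DiBenedetto--Gianazza--Vespri clustering strategy, which is a discrete Vitali-type covering argument combined with the energy bound (b). First I would dyadically subdivide the cube $Q_\rho(x_0)$: for each $n\in\mathbb{N}$ let $\mathcal{Q}_n$ denote the collection of $2^{nN}$ congruent closed subcubes of side length $2^{-n}\rho$ obtained by $n$ successive bisections. The idea is that if the conclusion \emph{failed} for every subcube of $\mathcal{Q}_n$ (for a suitably chosen $n$ and $\eta=2^{-n}$), i.e.\ if for every $Q\in\mathcal{Q}_n$ we had $|\{u>(1-\lambda)M\}\cap Q|\le(1-\delta)|Q|$, or equivalently $|\{u\le(1-\lambda)M\}\cap Q|\ge\delta|Q|$, then I want to derive a lower bound on the energy in (b) that contradicts the hypothesis once $n$ is large enough. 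The key point is that hypothesis (a) forces many of the subcubes to contain a substantial portion of the set $\{u\ge M\}$, and each such subcube then carries both a chunk of $\{u\ge M\}$ and a chunk of $\{u\le(1-\lambda)M\}$, separated by a gap of size $\lambda M$; this is exactly the situation where the Poincar\'e inequality (\ref{A-poin}) produces energy.

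The main steps in order: \textbf{(1)} Classify the subcubes of $\mathcal{Q}_n$ into ``good'' cubes $Q$ with $|\{u\ge M\}\cap Q|>\tfrac{\beta}{2}|Q|$ and ``bad'' cubes. A counting argument using (a) shows the good cubes cover at least a fixed fraction (depending only on $\beta$) of $|Q_\rho(x_0)|$, hence their number $N_{\mathrm{good}}$ is at least $c(\beta)2^{nN}$. \textbf{(2)} For each good cube $Q$ of side $s=2^{-n}\rho$, apply (\ref{A-poin}) on the ball $B$ of radius comparable to $s$ containing $Q$ (or a Poincar\'e inequality on the cube itself, obtained from (\ref{A-poin}) by the standard covering of a cube by finitely many balls). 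If in addition $Q$ violates the desired conclusion, so that $|\{u\le(1-\lambda)M\}\cap Q|\ge\delta|Q|$, then a truncation argument — estimating the oscillation of the truncated function $w=\min\{u,M\}$, which satisfies $w\ge M$ on a set of measure $\ge\tfrac\beta2|Q|$ and $w\le(1-\lambda)M$ on a set of measure $\ge\delta|Q|$ — yields
\begin{equation*}
\int_{B}\int_{B}|u(x)-u(y)|^2\,\mu(x,dy)\,dx\ \ge\ c(\mathfrak{C}_1,\beta,\delta,\lambda)\,M^2 s^{N-\alpha}.
\end{equation*}
\textbf{(3)} Sum this lower bound over a subfamily of good-but-bad cubes with bounded overlap (the subcubes in $\mathcal{Q}_n$ are essentially disjoint, so the enlarged balls $B$ have overlap bounded by a dimensional constant). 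Since $s=2^{-n}\rho$ and there are $\gtrsim 2^{nN}$ such cubes, the total energy is bounded below by $c\,2^{nN}\,M^2(2^{-n}\rho)^{N-\alpha}=c\,2^{n\alpha}M^2\rho^{N-\alpha}$. \textbf{(4)} Compare with hypothesis (b): we need $c\,2^{n\alpha}M^2\rho^{N-\alpha}\le\Lambda M^2\rho^{N-\alpha}$, which fails as soon as $2^{n\alpha}>\Lambda/c$. Thus for $n=n(\mathfrak{C}_1,\alpha,\beta,\Lambda,\delta,\lambda)$ chosen this large, at least one good cube must satisfy the conclusion, and we take $\eta=2^{-n}$ and $x_1$ its center.

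The delicate point — and the step I expect to be the main obstacle — is \textbf{(2)}, specifically transferring the Poincar\'e inequality (\ref{A-poin}), which is stated for balls, to the dyadic subcubes, and doing so \emph{without losing control of the double-integral energy restricted to the subcube}. The subtlety is that (\ref{A-poin}) on a ball $B\supset Q$ controls $\|v-[v]_B\|_{L^2(B)}^2$ by $\mathfrak{C}_1(\mathrm{rad}\,B)^\alpha\,\mathcal{E}^\mu_B(v,v)$, but in step (3) I must sum $\mathcal{E}^\mu_{B}$ over overlapping balls while the hypothesis (b) only gives me $\mathcal{E}^\mu_{Q_\rho(x_0)}$; I need the balls $B$ to stay inside (a fixed dilation of) $Q_\rho(x_0)$ and their finite overlap to absorb into a constant. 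A clean way around this is to choose the covering balls of side-comparable radius centered at points of $Q$ so that $B\subset Q_{2\rho}(x_0)$ say, replace $\rho$ by $\rho$ in the statement at the cost of shrinking constants, or — more in the spirit of the paper — to first prove a ``Poincar\'e on cubes'' corollary of (\ref{A-poin}) with the same scaling. The truncation bookkeeping (passing from $u$ to $w=\min\{u,M\}$ and checking that the two super-level/sub-level sets survive with the claimed measure fractions) is routine once the geometry is set up. Everything else — the counting in (1), the bounded-overlap summation in (3), and the choice of $n$ in (4) — is elementary.
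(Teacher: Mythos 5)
Your proposal is correct and follows essentially the same route as the paper: subdivide $Q_\rho(x_0)$ into congruent subcubes, count the ``good'' cubes carrying a $\beta/2$-fraction of $\{u\geqslant M\}$, derive on each good cube violating the conclusion a lower energy bound of order $M^2(\rho/k)^{N-\alpha}$ via the Poincar\'e inequality on the circumscribing ball, and sum to contradict (b) once the subdivision is fine enough. The paper avoids your truncation $w=\min\{u,M\}$ by a direct averaging estimate on $\int_Q(u-[u]_Q)^2$ using the two level sets separated by a gap of size $\lambda M/2$, but this is a cosmetic difference, and the delicate point you flag (transferring (\ref{A-poin}) from balls to cubes with bounded overlap) is handled in the paper exactly as you suggest.
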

\begin{proof}
Fix any $k\in\mathbb{N}$, we partition $Q_\rho(x_0)$ into a family $\mathcal{F}_k$ of cubes with side $\rho/k$, so that for all $Q\in\mathcal{F}_k$ we have
\begin{equation}\label{diag}
|Q|=\left(\frac{\rho}{k}\right)^N.
\end{equation}
We use $\#\mathcal{F}$ to represent the number of cubes in a family $\mathcal{F}$. Clearly, $\#\mathcal{F}_k=k^N$. Let $\beta\in(0,1)$ be as in (a) and divide all $Q\in\mathcal{F}_k$ into two classes according to
\begin{equation*}
\begin{cases}
Q\in\mathcal{F}_k^+\quad\text{if}\quad|Q\cap\{u>M\}|\geqslant\frac{\beta}{2}|Q|,\\
Q\in\mathcal{F}_k^-\quad\text{otherwise.}
\end{cases}
\end{equation*}
By hypothesis (a), for all $Q\in\mathcal{F}_k^+$ we have
\begin{align*}
\beta k^N|Q|=\beta|Q_\rho(x_0)|<&~|\{u\geqslant M\}\cap Q_\rho(x_0)|\\
=&\sum_{Q\in\mathcal{F}_k}|\{u\geqslant M\}\cap Q|\\
=&\sum_{Q\in\mathcal{F}_k^+}|\{u\geqslant M\}\cap Q|+\sum_{Q\in\mathcal{F}_k^-}|\{u\geqslant M\}\cap Q|.
\end{align*}
Dividing both sides by $|Q|$, by the definition of $\mathcal{F}_k^\pm$, we estimate
\begin{align*}
\beta k^N<&\sum_{Q\in\mathcal{F}_k^+}\frac{|\{u\geqslant M\}\cap Q|}{|Q|}+\sum_{Q\in\mathcal{F}_k^-}\frac{|\{u\geqslant M\}\cap Q|}{|Q|}\\
\leqslant&~\#\mathcal{F}_k^++\frac{\beta}{2}\#\mathcal{F}_k^-\\
=&~\frac{\beta}{2}k^N+\Big(1-\frac{\beta}{2}\Big)\#\mathcal{F}_k^+,
\end{align*}
which can be rephrased as
\begin{equation}\label{count}
\#\mathcal{F}_k^+>\frac{\beta}{2-\beta}k^N.
\end{equation}
Now fix $\delta,\lambda\in(0,1)$.
We claim that when $k$ is large enough, there must exist $Q\in\mathcal{F}_k^+$, such that
\begin{equation}\label{claim}
|\{u>(1-\lambda) M\}\cap Q|>(1-\delta)|Q|.
\end{equation}
Otherwise, for all $Q\in\mathcal{F}_k^+$ we have
\begin{equation}\label{est-m1}
|\{u\leqslant(1-\lambda) M\}\cap Q|\geqslant\delta|Q|.
\end{equation}
We will reach a contradiction. First, since $\lambda\in(0,1)$ and $Q\in\mathcal{F}_k^+$, we have
\begin{equation}\label{est-m2}
\Big|\Big\{u>\frac{2-\lambda}{2}M\Big\}\cap Q\Big|\geqslant|\{u>M\}\cap Q|\geqslant\frac{\beta}{2}|Q|.
\end{equation}
Next fix $x,y\in Q$, such that
\[u(x)\leqslant(1-\lambda) M,\quad u(y)>\frac{2-\lambda}{2}M,\]
which yields
\begin{equation}\label{est23}
u(y)-u(x)>\frac{\lambda}{2}M.
\end{equation}
Let $B$ be the ball circumscribing $Q$ such that $Q\subset B$. Applying (\ref{A-poin}) and H\"older's inequality, we have
\begin{align*}
\int_{Q}(u(x)-[u]_{Q})^2\,dx\leqslant&~2\int_Q(u(x)-[u]_B)^2dx+2\int_Q([u]_B-[u]_Q)^2\,dx\\
\leqslant&~4\int_B(u(x)-[u]_B)^2\,dx\\
\leqslant&~4\mathfrak{C}_1\left(\frac{\rho}{k}\right)^\alpha\int_B\int_B|u(x)-u(y)|^2\,\mu(x,dy)dx
\end{align*}
By using (\ref{est-m1}), (\ref{est-m2}) and (\ref{est23}), we estimate
\begin{align*}
\int_{Q}(u(x)-[u]_{Q})^2\,dx=&\int_Q\bigg(\fint_Q (u(y)-u(x))\,dy\bigg)^2dx\\
\geqslant&\int_{\{u\leqslant(1-\lambda)M\}\cap Q}\bigg(\frac{1}{|Q|}\int_{\{u>\frac{2-\lambda}{2}M\}\cap Q}(u(y)-u(x))\,dy\bigg)^2dx\\
\geqslant&~\frac{\beta^2\lambda^2M^2}{16}|\{u\leqslant(1-\lambda)M\}\cap Q|\\
\geqslant&~\frac{\beta^2\lambda^2M^2\delta}{16}|Q|.
\end{align*}
Therefore, under our current assumptions, the inequality
\begin{equation*}
\frac{\beta^2\lambda^2M^2\delta}{16}|Q|\leqslant 4\mathfrak{C}_1\left(\frac{\rho}{k}\right)^\alpha\int_B\int_B|u(x)-u(y)|^2\,\mu(x,dy)dx
\end{equation*}
holds for all $Q\in\mathcal{F}_k^+$. Summing over $Q\in\mathcal{F}_k^+$ and using (\ref{diag}) and (\ref{count}) we get
\begin{align*}
\frac{\beta^3\lambda^2M^2\delta}{16(2-\beta)}\rho^N\leqslant&~\frac{\beta}{2-\beta}k^N\frac{\beta^2\lambda^2M^2\delta}{16}|Q|\\
\leqslant&~4\mathfrak{C}_1\left(\frac{\rho}{k}\right)^\alpha\sum_{Q\in\mathcal{F}_k^+}\int_B\int_B|u(x)-u(y)|^2\,\mu(x,dy)dx\\
\leqslant&~\gamma(N,\mathfrak{C}_1)\left(\frac{\rho}{k}\right)^\alpha\int_{Q_\rho(x_0)}\int_{Q_\rho(x_0)}|u(x)-u(y)|^2\,\mu(x,dy)dx\\
\leqslant&~\frac{\Lambda\gamma(N,\mathfrak{C}_1) M^2\rho^N}{k^\alpha},
\end{align*}
where we used hypothesis (b) in the last line. Rewriting the above inequality we get
\begin{equation}\label{est-k}
k\leqslant\left(\frac{16(2-\beta)\Lambda\gamma(N,\mathfrak{C}_1)}{\beta^3\lambda^2\delta}\right)^{\frac{1}{\alpha}}.
\end{equation}
That means if all $Q\in\mathcal{F}_k^+$ violate (\ref{claim}), then $k$ must have an upper bound by (\ref{est-k}), which is a contradiction. Therefore, if we take
\begin{equation}\label{k}
k=\left(\frac{32(2-\beta)\Lambda\gamma(N,\mathfrak{C}_1)}{\beta^3\lambda^2\delta}\right)^{\frac{1}{\alpha}},
\end{equation}
there must exist $Q\in\mathcal{F}_k^+$, such that (\ref{claim}) holds. Then, let $x_1\in Q_\rho(x_0)$ be the center of $Q,\eta=\frac{1}{k}\in(0,1),$ and the conclusion follows.
\end{proof}
\begin{remark}\upshape
The idea behind the above clustering lemma mainly comes from \cite[Chapter~2,~Lemma~3.1]{dibe-gia-ves}. An adaptation to a fractional setting appears in \cite{duzgun}. However, the result of \cite{duzgun} does not apply to our general setting.
\end{remark}
\begin{remark}\upshape
This clustering lemma is stated on cubes, for balls, it is essentially equivalent. We only need to replace the segmentation argument in the above proof with a covering argument to see that.
\end{remark}
\section{Fundamental estimates}
This section contains several important modules for the proof of regularity estimates. Indeed, we deduce the energy estimates and the expansion of positivity lemma for the solutions of (\ref{equa}). In what follows, when we state ``$u$ is a sub(super)-solution..." and use ``$\pm$" or ``$\mp$", we mean the sub-solution corresponds to the upper sign and the super-solution corresponds to the lower sign in the statement.
\subsection{Energy estimates}
This subsection is devoted to energy estimates satisfied by local weak sub(super)-solutions of (\ref{equa}).
\begin{proposition}\label{DG-class}
Let $\mu$ satisfy (\ref{S}). Let $u$ be a local weak sub(super)-solution of (\ref{equa}) in $E$. Then, for every non-negative piecewise smooth cutoff function $\zeta(\cdot)$ compactly supported in $B_R\equiv B_R(x_0)\subset E$, there holds
\begin{eqnarray*}
&&\int_{B_R}\int_{B_R}(\zeta(y)w_\pm(y)-\zeta(x)w_\pm(x))^2\,\mu(x,dy)dx\nonumber\\
&&\qquad+\int_{B_R}w_{\pm}(x)\zeta^2(x)\,dx\bigg(\int_{\mathbb{R}^N}w_{\mp}(y)\,\mu(x,dy)\bigg)\nonumber\\
&&\quad\leqslant2\int_{B_R}\int_{B_R}\max\{w_{\pm}^2(x),w_{\pm}^2(y)\}(\zeta(x)-\zeta(y))^2\,\mu(x,dy)dx\nonumber\\
&&\qquad+2\int_{B_R}w_{\pm}(x)\zeta^2(x)\,dx\bigg(\underset{x\in\mathrm{supp}\,\zeta(\cdot)}{\mathrm{sup}}\int_{\mathbb{R}^N\backslash B_R}w_{\pm}(y)\,\mu(x,dy)\bigg).
\end{eqnarray*}
Here, we have denoted $w=u-k$.
\end{proposition}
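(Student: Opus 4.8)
The plan is to test \eqref{def-solution} with $\varphi=\zeta^2w_+$, split the resulting inequality (using (S)) into a near-diagonal part over $B_R\times B_R$ and a tail part over $B_R\times(\mathbb R^N\backslash B_R)$, and bound each from below by elementary pointwise inequalities.

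First I would reduce to the sub-solution case: if $u$ is a local weak super-solution then $-u$ is a local weak sub-solution (bilinearity of $\mathcal E^\mu$), and the sub-solution estimate applied to $-u$ at level $-k$ interchanges $w_+$ and $w_-$ everywhere, which is exactly the lower-sign statement. So assume $u$ is a local weak sub-solution, set $w=u-k$, and take $\varphi=\zeta^2w_+$. I would then verify that $\varphi$ is admissible in \eqref{def-solution}: it is non-negative and compactly supported in $\mathrm{supp}\,\zeta\subset B_R\subset E$, and $\varphi\in H^\mu(\mathbb R^N)$. The last point is the one genuinely technical ingredient; it should follow from $|\varphi(x)-\varphi(y)|\le\zeta^2(x)|u(x)-u(y)|+w_+(y)\,|\zeta^2(x)-\zeta^2(y)|$ together with $u\in H^\mu_{\mathrm{loc}}(E)$, the Lipschitz bound for $\zeta^2$ and Lemma~\ref{D->B} near the diagonal, and the tail finiteness built into the definition of a weak solution when $|x-y|$ is large (so that $\zeta$ vanishes there). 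Granting admissibility, \eqref{def-solution} gives $\mathcal E^\mu(u,\varphi)\le0$; splitting via (S) and using $\varphi\equiv0$ off $B_R$, this reads $0\ge I+II$ with $I=\int_{B_R}\int_{B_R}(u(x)-u(y))(\varphi(x)-\varphi(y))\,\mu(x,dy)dx$ and $II=2\int_{B_R}\int_{\mathbb R^N\backslash B_R}(u(x)-u(y))\varphi(x)\,\mu(x,dy)dx$.

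The core is a pointwise lower bound on the integrand of $I$. Write $a=w(x)$, $b=w(y)$, $s=w_+(x)$, $t=w_+(y)$, $\sigma=\zeta(x)$, $\tau=\zeta(y)$, so that $\varphi(x)-\varphi(y)=s\sigma^2-t\tau^2$; using $a=s-w_-(x)$, $b=t-w_-(y)$ and $w_+w_-\equiv0$ one has the exact identity $(a-b)(s\sigma^2-t\tau^2)=(s-t)(s\sigma^2-t\tau^2)+w_-(y)\,s\sigma^2+w_-(x)\,t\tau^2$, where the last two terms are non-negative, together with $(s-t)(s\sigma^2-t\tau^2)=(s\sigma-t\tau)^2-st(\sigma-\tau)^2$ and $st\le\max\{s^2,t^2\}$. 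Dropping the non-negative term $w_-(x)\,t\tau^2$ and integrating yields
\begin{align*}
I\ \ge\ &\int_{B_R}\!\int_{B_R}(\zeta(y)w_+(y)-\zeta(x)w_+(x))^2\,\mu(x,dy)dx-\int_{B_R}\!\int_{B_R}\max\{w_+^2(x),w_+^2(y)\}(\zeta(x)-\zeta(y))^2\,\mu(x,dy)dx\\
&+\int_{B_R}\zeta^2(x)w_+(x)\Big(\int_{B_R}w_-(y)\,\mu(x,dy)\Big)dx.
\end{align*}
For $II$, on $\mathrm{supp}\,\varphi$ one has $u(x)>k$, hence $u(x)-u(y)=w_+(x)-w_+(y)+w_-(y)\ge w_-(y)-w_+(y)$; multiplying by $\varphi(x)=\zeta^2(x)w_+(x)\ge0$ and estimating the $w_+(y)$-part by its supremum over $\mathrm{supp}\,\zeta$ gives
\begin{align*}
II\ \ge\ &2\int_{B_R}\zeta^2(x)w_+(x)\Big(\int_{\mathbb R^N\backslash B_R}w_-(y)\,\mu(x,dy)\Big)dx\\
&-2\Big(\sup_{x\in\mathrm{supp}\,\zeta}\int_{\mathbb R^N\backslash B_R}w_+(y)\,\mu(x,dy)\Big)\int_{B_R}\zeta^2(x)w_+(x)\,dx.
\end{align*}

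To finish I would insert both bounds into $0\ge I+II$ and transfer the two integrals involving $w_-$ to the left-hand side; since every factor there is non-negative, the contribution with $\int_{B_R}$ coming from $I$ and the contribution with $\int_{\mathbb R^N\backslash B_R}$ coming from $II$ together dominate $\int_{B_R}\zeta^2(x)w_+(x)\big(\int_{\mathbb R^N}w_-(y)\,\mu(x,dy)\big)dx$. Rearranging and using $1\le2$ in front of the $\max$-term yields the asserted estimate (in fact with the sharper constant $1$ there), and the super-solution case follows from the reduction above. I expect the two points requiring genuine care to be the admissibility of $\varphi$ --- this is where Lemma~\ref{D->B} and the solution's tail condition enter --- and the sign bookkeeping in the two pointwise bounds, namely deciding exactly which remainder terms are non-negative, hence discardable or absorbable on the left; the algebraic identities themselves are routine once the correct grouping has been found.
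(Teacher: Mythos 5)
Your proposal is correct and follows essentially the same route as the paper: the same test function $\varphi=\zeta^2w_+$, the same splitting of $\mathcal{E}^\mu(u,\varphi)$ via (S) into a $B_R\times B_R$ part and a doubled tail part, the same pointwise identity $(s-t)(s\sigma^2-t\tau^2)=(s\sigma-t\tau)^2-st(\sigma-\tau)^2$ plus non-negative cross terms $w_-(y)w_+(x)\zeta^2(x)+w_-(x)w_+(y)\zeta^2(y)$, and the same supremum bound on the exterior $w_+$ contribution. Your bookkeeping of the $\int_{\mathbb{R}^N}w_-\,\mu(x,dy)$ term (assembling it from the near-diagonal and tail pieces) and your explicit admissibility check for $\varphi$ are, if anything, slightly tidier than the paper's, and you correctly observe that the constant in front of the $\max$-term can be taken to be $1$.
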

\begin{proof}
We will only deal with the case of sub-solution as the other case is similar. We use $\varphi=w_{+}\zeta^2$ as a testing function in the weak form (\ref{def-solution}) of (\ref{equa}). Due to the support of $\zeta$ and (\ref{S}), $\mathcal{E}^{\mu}(u,\varphi)$ can be split into two parts, that is
\begin{align*}
\mathcal{E}^{\mu}(u,\varphi)=&\int_{\mathbb{R}^N}\int_{\mathbb{R}^N}(u(y)-u(x))[(u(y)-k)_{+}\zeta^2(y)-(u(x)-k)_{+}\zeta^2(x)]\,\mu(x,dy)dx\\
=&\int_{B_R}\int_{\mathbb{R}^N}(u(y)-u(x))[(u(y)-k)_{+}\zeta^2(y)-(u(x)-k)_{+}\zeta^2(x)]\,\mu(x,dy)dx\\
&+2\int_{B_R}\int_{\mathbb{R}^N\backslash B_R}(u(x)-u(y))(u(x)-k)_{+}\zeta^2(x)\,\mu(x,dy)dx\\
=:&~I_1+2I_2.
\end{align*}
In the first term, due to $ w_-(y) w_+(y)= w_-(x) w_+(x)=0$, we have
\begin{align*}\label{est35}
&(u(y)-u(x))[(u(y)-k)_{+}\zeta^2(y)-(u(x)-k)_{+}\zeta^2(x)]\nonumber\\
&\quad=[\zeta(y)w_+(y)-\zeta(x)w_+(x)]^2+w_-(y)w_+(x)\zeta^2(x)+w_-(x)w_+(y)\zeta^2(y)\nonumber\\
&~\qquad-w_+(y)w_+(x)(\zeta(x)-\zeta(y))^2.
\end{align*}
\par
Due to the support of $\zeta$ and symmetry of $\mu$, we have
\begin{align*}
I_1=&\int_{B_R}\int_{B_R}[\zeta(y)w_+(y)-\zeta(x)w_+(x)]^2\,\mu(x,dy)dx\\
&~+2\int_{B_R}\int_{\mathbb{R}^N}w_-(y)w_+(x)\zeta^2(x)\,\mu(x,dy)dx\\
&~-\int_{B_R}\int_{B_R}w_+(y)w_+(x)(\zeta(x)-\zeta(y))^2\,\mu(x,dy)dx\\
\geqslant&\int_{B_R}\int_{B_R}[\zeta(y)w_+(y)-\zeta(x)w_+(x)]^2\,\mu(x,dy)dx\\
&~+2\int_{B_R}\int_{\mathbb{R}^N}w_-(y)w_+(x)\zeta^2(x)\,\mu(x,dy)dx\\
&~-\int_{B_R}\int_{B_R}\max\{w_{+}^2(x),w_{+}^2(y)\}(\zeta(x)-\zeta(y))^2\,\mu(x,dy)dx.
\end{align*}
\par
To treat the second term, we first estimate
\begin{align*}
-(u(x)-u(y))(u(x)-k)_+\zeta^2(x)=&~(u(y)-u(x))(u(x)-k)_+\zeta^2(x)\\
\leqslant&~(u(y)-k)_+(u(x)-k)_+\zeta^2(x)\\
=&~w_+(y) w_+(x)\zeta^2(x).
\end{align*}
Therefore,
\begin{align*}
-I_2\leqslant&\int_{B_R}\int_{\mathbb{R}^N\backslash B_R} w_+(y) w_+(x)\zeta^2(x)\,\mu(x,dy)dx\\
\leqslant&\int_{B_R} w_+(x)\zeta^2(x)\,dx\bigg(\underset{x\in\text{supp}\,\zeta}{\text{sup}}\int_{\mathbb{R}^N\backslash B_R} w_+(y)\,\mu(x,dy)\bigg).
\end{align*}
Finally, we put all these estimates together and use the symmetry of $\mu$ to conclude.
\end{proof}
\subsection{Expansion of Positivity}
In regularity theory for elliptic and parabolic integro-differential equations, one of the essential tools is the expansion of positivity lemma. Before stating it, let us introduce two preliminary lemmas, i.e. a critical mass lemma and a measure shrinking lemma.
\begin{lemma}[critical mass]\label{Crit-Densi}
Let $\mu$ satisfy (\ref{S}), (\ref{A-poin}) and (\ref{D}). Let $B_R(x_0)\subset E$, $0<\rho<R$ and $M>0$. Let $u$ be a local weak super-solution of (\ref{equa}) in $E$, such that $u\geqslant 0$ in $B_R(x_0)$. There exists a constant $\nu\in(0,1)$ depending only on the {\it data} such that, if
\[|\{u<M\}\cap B_\rho(x_0)|\leqslant\nu|B_\rho(x_0)|,\]
then either
\[\left(\frac{\rho}{R}\right)^\alpha{\rm Tail}(u_-; x_0, \tfrac78R,R)>M\]
or
\[u\geqslant\frac{M}{2}\quad\text{a.e. in}\quad B_{\rho/2}(x_0).\]
\end{lemma}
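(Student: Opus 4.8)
\textbf{Proof strategy for Lemma~\ref{Crit-Densi}.} The plan is to run a De~Giorgi iteration on the truncated super-solution, working on a sequence of shrinking balls between $B_{\rho/2}(x_0)$ and $B_\rho(x_0)$. Concretely, for $j=0,1,2,\dots$ set $\rho_j=\tfrac\rho2(1+2^{-j})$, so $\rho_0=\rho$ and $\rho_j\downarrow\rho/2$, and pick smooth cutoffs $\zeta_j$ with $\zeta_j\equiv 1$ on $B_{\rho_{j+1}}(x_0)$, $\zeta_j\equiv 0$ outside $B_{(\rho_j+\rho_{j+1})/2}(x_0)$, and $|\nabla\zeta_j|\lesssim 2^j/\rho$. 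Introduce levels $k_j=\tfrac M2(1+2^{-j})$ decreasing from $M$ to $M/2$, and the quantities
\[
Y_j=\fint_{B_{\rho_j}(x_0)}\big((k_j-u)_+\big)^2\,dx,\qquad A_j=\big|\{u<k_j\}\cap B_{\rho_j}(x_0)\big|.
\]
We want to show $Y_j\to 0$, which forces $u\geqslant M/2$ a.e. on $B_{\rho/2}(x_0)$.

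The core is a recursive inequality $Y_{j+1}\leqslant C\,b^j\,Y_j^{1+\beta}$ for suitable $C,b>1$ and $\beta=\alpha/N$ (or $2\alpha/N$, depending on how the interpolation is arranged), under the dichotomy that the tail alternative fails. To obtain it, I would: (i) apply the energy estimate of Proposition~\ref{DG-class} with the choice $w=u-k_j$ (so $w_-=(k_j-u)_+$ is the relevant truncation for a super-solution), cutoff $\zeta_j$, and $R=\rho_j$; the term $\int w_-\zeta_j^2\big(\int w_+\,\mu(x,dy)\big)$ on the left is non-negative and can be dropped, while the boundary term on the right, $\int_{B_{\rho_j}}w_-\zeta_j^2\,dx\,\sup_x\int_{\mathbb R^N\setminus B_{\rho_j}}w_-(y)\,\mu(x,dy)$, is handled by splitting $\mathbb R^N\setminus B_{\rho_j}=\big(B_R\setminus B_{\rho_j}\big)\cup\big(\mathbb R^N\setminus B_R\big)$: on $B_R\setminus B_{\rho_j}$ one has $u\geqslant0$ so $w_-\leqslant k_j\leqslant M$ and Lemma~\ref{D2} gives a bound $\lesssim M\,(\rho_j-\rho_{j+1})^{-\alpha}\lesssim M\,2^{j\alpha}\rho^{-\alpha}$, while on $\mathbb R^N\setminus B_R$ one has $w_-\leqslant k_j+u_-\leqslant M+u_-$ and the contribution $\sup_x\int_{\mathbb R^N\setminus B_R}u_-\,\mu(x,dy)$ is exactly $R^{-\alpha}\,{\rm Tail}(u_-;x_0,\tfrac78R,R)$, which under the failure of the first alternative is $\leqslant \rho^{-\alpha}M\cdot(R/\rho)^\alpha\cdot(\rho/R)^\alpha=\dots$; more carefully, the assumption "not (first alternative)" means $(\rho/R)^\alpha{\rm Tail}(u_-)\leqslant M$, hence this whole boundary term is controlled by $\gamma M^2 2^{j\alpha}\rho^{-\alpha}\,A_j$ (using $\int w_-\zeta_j^2\leqslant M A_j$). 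Thus the energy estimate yields
\[
\mathcal E^\mu_{B_{\rho_j}}\!\big(\zeta_j w_-,\zeta_j w_-\big)\leqslant \gamma\, M^2\,2^{j(2+\alpha)}\rho^{-\alpha}\,A_j.
\]
(ii) Feed $v=\zeta_j w_-$, which is supported in $\Omega_0:=\{u<k_j\}\cap B_{\rho_j}(x_0)$, into the Faber--Krahn inequality (FK) of Lemma~\ref{poin->FK} to get $\int_{B_{\rho_{j+1}}}w_-^2\,dx\lesssim |\Omega_0|^{\alpha/N}\,\mathcal E^\mu(\zeta_j w_-,\zeta_j w_-)\lesssim M^2 2^{j(2+\alpha)}\rho^{-\alpha}\,A_j^{1+\alpha/N}$. (iii) Convert between levels: since $(k_j-k_{j+1})^2\,A_{j+1}\leqslant \int_{B_{\rho_{j+1}}}((k_j-u)_+)^2\,dx$ and $k_j-k_{j+1}=M2^{-(j+2)}$, we get $A_{j+1}\lesssim M^2 2^{2j}\rho^{-\alpha}\,M^{-2}2^{2j}\cdot(\dots)$; writing everything in the dimensionless variables $\tilde Y_j:=A_j/|B_{\rho_j}|$ and using $|B_{\rho_j}|\sim\rho^N$, the exponent $\rho^{-\alpha}$ is absorbed by one factor $|B_{\rho_j}|^{-\alpha/N}$ coming from $A_j^{\alpha/N}=(\tilde Y_j)^{\alpha/N}|B_{\rho_j}|^{\alpha/N}$, leaving a clean recursion $\tilde Y_{j+1}\leqslant C\,b^j\,\tilde Y_j^{1+\alpha/N}$ with $C=C(\mathrm{data})$ and $b=b(\alpha,N)>1$.

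Finally, apply the standard fast-geometric-convergence lemma (De~Giorgi's iteration lemma): there is $\nu=\nu(\mathrm{data})\in(0,1)$, namely $\nu=C^{-N/\alpha}b^{-(N/\alpha)^2}$ or similar, such that if $\tilde Y_0=A_0/|B_\rho(x_0)|=|\{u<M\}\cap B_\rho(x_0)|/|B_\rho(x_0)|\leqslant\nu$, then $\tilde Y_j\to0$. Hence $|\{u<M/2\}\cap B_{\rho/2}(x_0)|=\lim_j A_j=0$, i.e. $u\geqslant M/2$ a.e. in $B_{\rho/2}(x_0)$, which is the second alternative. This closes the dichotomy.

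\textbf{Main obstacle.} The delicate point is the treatment of the nonlocal boundary term in Proposition~\ref{DG-class}: one must split the complement of $B_{\rho_j}$ into the near region (inside $B_R$, where positivity of $u$ gives $w_-\leqslant M$ and Lemma~\ref{D2} supplies the right $2^{j\alpha}\rho^{-\alpha}$ decay after accounting for the gap $\rho_j-\rho_{j+1}$) and the far region (outside $B_R$, which produces precisely the tail of $u_-$), and then verify that the failure of the first alternative converts the tail contribution into something of the same form $\gamma M^2 2^{j\alpha}\rho^{-\alpha}A_j$ so that it merges with the rest. Keeping the powers of $2^j$ and the homogeneity in $M$ and $\rho$ consistent across (i)--(iii) so that the final recursion is genuinely of the form $\tilde Y_{j+1}\leqslant Cb^j\tilde Y_j^{1+\beta}$ with $\beta>0$ is the only real bookkeeping difficulty; everything else is a routine application of the two key tools established earlier, Proposition~\ref{DG-class} and Lemma~\ref{poin->FK}.
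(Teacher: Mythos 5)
Your proposal follows essentially the same route as the paper's proof: the same geometric sequences of radii and levels, the same application of Proposition~\ref{DG-class} to $(u-k_j)_-$ with the complement split into $B_R\setminus B_{\rho_j}$ and $\mathbb{R}^N\setminus B_R$, the same use of the failed tail alternative to absorb the far contribution, and the same Faber--Krahn plus fast-geometric-convergence conclusion. The only detail to add is that Lemma~\ref{poin->FK} requires the global energy $\mathcal{E}^{\mu}_{\mathbb{R}^N}(\zeta_j w_-,\zeta_j w_-)$ rather than the localized $\mathcal{E}^{\mu}_{B_{\rho_j}}$ you bound in step (i), so you must also control the cross term $2\int_{B_{\rho_j}}\int_{\mathbb{R}^N\backslash B_{\rho_j}}|\zeta_j(x)w_-(x)|^2\,\mu(x,dy)dx$, which Lemma~\ref{D2} bounds by the same quantity $\gamma\,2^{j\alpha}\rho^{-\alpha}M^2A_j$, exactly as the paper does.
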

\begin{proof}
Having fixed the pair of balls $B_{\rho/2}(x_0)\subset B_{\rho}(x_0),$ assume $x_0=0$ and introduce for $n\in\mathbb{N}_0,$
\begin{align*}
&k_n=\frac{M}{2}+\frac{M}{2^{n+1}},\quad\rho_n=\frac{\rho}{2}+\frac{\rho}{2^{n+1}},\quad\widetilde{\rho}_n=\frac{\rho_n+\rho_{n+1}}{2},\\
&B_n=B_{\rho_n},\qquad\widetilde{B}_n=B_{\widetilde{\rho}_n}.
\end{align*}
Observe that
$$B_{n+1}\subset\widetilde{B}_n\subset B_n.$$
Take $\zeta_n:\mathbb{R}^N\rightarrow\mathbb{R}$ to be a function such that $\text{supp}\,\zeta_n(\cdot)\subset \widetilde{B}_n,~\|\zeta_n\|_\infty\leqslant 1,~\zeta_n(x)=1$ in $B_{n+1}$ and $|\nabla\zeta|\leqslant\gamma(N)2^{n+3}\rho^{-1}$. According to assumption (\ref{D}) and Lemma~\ref{D->B}, we have
\begin{equation*}
\sup_{x\in B_{n}}\int_{B_{n}}(\zeta_n(y)-\zeta_n(x))^2\,\mu(x,dy)\leqslant \gamma(\mathfrak{C}_2,N,\alpha)\left(\frac{\rho}{2^{n+3}}\right)^{-\alpha}.
\end{equation*}
\par
By Proposition \ref{DG-class} and (\ref{S}), we have
\begin{align}\label{est1}
&\int_{B_{n}}\int_{B_{n}}|\zeta_n(y)(u(y)-k_n)_--\zeta_n(x)(u(x)-k_n)_-|^2\,\mu(x,dy)dx\nonumber\\
\leqslant&~\gamma(\mathfrak{C}_2,N,\alpha)\left(\frac{\rho}{2^{n+3}}\right)^{-\alpha}\int_{B_{n}}(u(x)-k_n)_-^2\,dx\nonumber\\
&~+2\int_{B_{n}}(u(x)-k_n)_-\,dx\bigg(\underset{x\in\text{supp}\,\zeta_n}{\text{sup}}\int_{\mathbb{R}^N\backslash B_n}(u(y)-k_n)_-\,\mu(x,dy)\bigg).
\end{align}
It is standard to obtain
\begin{equation}\label{est2}
\int_{B_n}(u(x)-k_n)_-^2dx\leqslant M^2|\{u<k_n\}\cap B_n|.
\end{equation}
\par
For the second term, recalling $u\geqslant 0$ in $B_R$ by assumption and $k_n\leqslant M$, we estimate
\begin{align*}
&\int_{B_{n}}(u(x)-k_n)_-\,dx\bigg(\underset{x\in\text{supp}\,\zeta_n}{\text{sup}}\int_{\mathbb{R}^N\backslash B_n}(u(y)-k_n)_-\,\mu(x,dy)\bigg)\\
\leqslant&~M|\{u<k_n\}\cap B_n|\bigg(\underset{x\in\widetilde{B}_n}{\text{sup}}\int_{\mathbb{R}^N\backslash B_n}k_n\,\mu(x,dy)+\underset{x\in\widetilde{B}_n}{\text{sup}}\int_{\mathbb{R}^N\backslash B_n}u_-(y)\,\mu(x,dy)\bigg)\\
\leqslant&~M|\{u<k_n\}\cap B_n|\bigg(M\mathfrak{C}_2 2^{(n+3)\alpha}\rho^{-\alpha}+\underset{x\in\widetilde{B}_0}{\text{sup}}\int_{\mathbb{R}^N\backslash B_n}u_-(y)\,\mu(x,dy)\bigg)\\
\leqslant&~M|\{u<k_n\}\cap B_n|\Big(M\mathfrak{C}_2 2^{(n+3)\alpha}\rho^{-\alpha}+\left(\frac{\rho}{R}\right)^\alpha{\rm Tail}(u_-;\tfrac78R,R)\Big),
\end{align*}
where we have used the assumption (\ref{D}) of $\mu$ and Lemma~\ref{D2}. If we impose
\[\left(\frac{\rho}{R}\right)^\alpha{\rm Tail}(u_-;\tfrac78R,R)\leqslant M,\]
then the above estimate becomes
\begin{flalign}\label{est3}
\int_{B_{n}}(u(x)-k_n)_-\,dx\bigg(\underset{x\in\text{supp}\,\zeta_n}{\text{sup}}\int_{\mathbb{R}^N\backslash B_n}(u(y)-k_n)_-\,\mu(x,dy)\bigg)\qquad\qquad\nonumber\\
\leqslant\gamma(\mathfrak{C}_2) 2^{(n+3)\alpha}M^2\rho^{-\alpha}|\{u<k_n\}\cap B_n|.
\end{flalign}
Combining (\ref{est1}), (\ref{est2}) and (\ref{est3}) we obtain
\begin{align*}
&\int_{B_{n}}\int_{B_{n}}|\zeta_n(y)(u(y)-k_n)_--\zeta_n(x)(u(x)-k_n)_-|^2\,\mu(x,dy)dx\nonumber\\
&\quad\qquad\leqslant\gamma2^{(n+3)\alpha}M^2\rho^{-\alpha}|\{u<k_n\}\cap B_n|,
\end{align*}
for some $\gamma=\gamma(\mathfrak{C}_2,N,\alpha)$.
Furthermore, since
\begin{align*}
&\int_{B_n}\int_{\mathbb{R}^N\backslash B_n}|\zeta_n(x)(u(x)-k_n)_-|^2\,\mu(x,dy)dx\\
&\qquad\quad\leqslant\underset{x\in \widetilde{B}_n}{\text{ sup}}\int_{\mathbb{R}^N\backslash B_n}\mu(x,dy)\int_{B_n}(u(x)-k_n)_-^2\,dx\\
&\qquad\quad\leqslant\mathfrak{C}_2 2^{(n+3)\alpha}\rho^{-\alpha}M^2|\{u<k_n\}\cap B_n|,
\end{align*}
we have
\begin{align*}
&\mathcal{E}_{\mathbb{R}^N}^\mu(\zeta_n(u-k_n)_-,\zeta_n(u-k_n)_-)\\
&\qquad\leqslant\int_{B_{n}}\int_{B_{n}}|\zeta_n(y)(u(y)-k_n)_--\zeta_n(x)(u(x)-k_n)_-|^2\,\mu(x,dy)dx\\
&\qquad\quad+2\int_{B_n}\int_{\mathbb{R}^N\backslash B_n}|\zeta_n(x)(u(x)-k_n)_-|^2\,\mu(x,dy)dx\\
&\qquad\leqslant~\gamma2^{(n+3)\alpha}M^2\rho^{-\alpha}|\{u<k_n\}\cap B_n|,
\end{align*}
for some $\gamma=\gamma(\mathfrak{C}_2,N,\alpha)$.
\par
An application of Lemma~\ref{poin->FK} with $v=\zeta_n(u-k_n)_-,\Omega_0=\widetilde{B}_n\cap\{u<k_n\}$ gives that
\begin{equation*}
c_0\mathcal{E}_{\mathbb{R}^N}^\mu(\zeta_n(u-k_n)_-,\zeta_n(u-k_n)_-)\geqslant|\widetilde{B}_n\cap\{u<k_n\}|^{-\frac{\alpha}{N}}\int_{\widetilde{B}_n\cap\{u<k_n\}}(u-k_n)_-^2\,dx.
\end{equation*}
By the definitions of $k_n,B_n,\widetilde{B}_n$, obviously we have
\begin{equation*}
B_{n+1}\cap\{u<k_{n+1}\}\subset\widetilde{B}_n\cap\{u<k_n\}\subset B_n\cap\{u<k_n\},
\end{equation*}
hence we estimate
\begin{align*}
\int_{\widetilde{B}_n\cap\{u<k_n\}}(u-k_n)_-^2\,dx\geqslant&\,\int_{B_{n+1}\cap\{u<k_{n+1}\}}(u-k_n)_-^2\,dx\\
\geqslant&\,\int_{B_{n+1}\cap\{u<k_{n+1}\}}(k_n-k_{n+1})^2\,dx\\
=&\,\Big(\frac{M}{2^{n+2}}\Big)^2|B_{n+1}\cap\{u<k_{n+1}\}|,
\end{align*}
and
\begin{equation*}
|\widetilde{B}_n\cap\{u<k_n\}|^{-\frac{\alpha}{N}}\geqslant|B_n\cap\{u<k_n\}|^{-\frac{\alpha}{N}}.
\end{equation*}
Combing the above inequalities, we get
\[|B_n\cap\{u<k_n\}|^{-\frac{\alpha}{N}}|B_{n+1}\cap\{u<k_{n+1}\}|\leqslant\gamma 2^{(2+\alpha)n}\rho^{-\alpha}|B_n\cap\{u<k_{n}\}|.\]
\par
In terms of $Y_n:={|\{u<k_{n}\}\cap B_{n}|}/{|B_n|},$ this estimate leads to
\[Y_{n+1}\leqslant\gamma 2^{(2+\alpha)n}Y_n^{1+\frac{\alpha}{N}}\]
for some $\gamma=\gamma({\it data})$.
By the fast geometric convergence, cf.~\cite[Chapter~2 Lemma~5.1]{dibe-gia-ves}, if we require that
\[Y_0\leqslant\gamma^{-\frac{N}{\alpha}}2^{-\frac{N(2+\alpha)}{\alpha^2}}=:\nu\quad\Longleftrightarrow\quad\frac{|\{u<M\}\cap B_\rho|}{|B_\rho|}\leqslant\nu,\]
then $Y_n\to 0$ as $n\to\infty$, i.e. $|\{u<\frac{M}{2}\}\cap B_{\frac{\rho}{2}}|=0.$
\end{proof}
To continue, we introduce the following \textit{logarithmic estimate}, which plays the key role in the proof of the measure shrinking lemma. The idea of proof comes from \cite{kass3}. This is a simplified version of \cite[Lemma 4.4]{kass1}.
\begin{proposition}\label{log-esti}
Let $\mu$ satisfy (\ref{S}) and (\ref{D}). Let $B_R(x_0)\subset E$. Let $u$ be a local weak super-solution of (\ref{equa}) in $E$, such that $u\geqslant 0$ in $B_R(x_0)$. There exists $\gamma>1$ depending only on the \textit{data}\,$\{\mathfrak{C}_2,N,\alpha\}$, such that for any $0<\rho<R/2$ and any $h>0$, the following estimate holds
\begin{align*}\label{log}
&\int_{B_{\rho}(x_0)}\int_{B_{\rho}(x_0)}\left|\log\left(\frac{u(x)+h}{u(y)+h}\right)\right|^2\mu(x,dy)dx\nonumber\\
&\qquad\leqslant\gamma\rho^{N-\alpha}\bigg[h^{-1}\left(\frac{\rho}{R}\right)^\alpha{\rm Tail}(u_-;x_0,\tfrac78R,R)+1\bigg].
\end{align*}
\end{proposition}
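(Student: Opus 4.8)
The plan is to test the weak formulation \eqref{def-solution} with a function of the form $\varphi(x) = \psi(x)^2\big(u(x)+h\big)^{-1}$, where $\psi$ is a cutoff supported in $B_{2\rho}(x_0)$ with $\psi \equiv 1$ on $B_{\rho}(x_0)$, $0 \le \psi \le 1$, and $|\nabla\psi| \lesssim \rho^{-1}$. Since $u \ge 0$ on $B_R(x_0)$ and $h>0$, the function $(u+h)^{-1}$ is bounded on the support of $\psi$, so $\varphi \in H^\mu(\mathbb{R}^N)$ is an admissible non-negative test function, and the super-solution inequality gives $\mathcal{E}^\mu(u,\varphi) \ge 0$. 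As in the proof of Proposition~\ref{DG-class}, I would split $\mathcal{E}^\mu(u,\varphi)$ via \eqref{S} into the diagonal part over $B_{2\rho}(x_0)\times B_{2\rho}(x_0)$ and the tail part over $B_{2\rho}(x_0)\times\big(\mathbb{R}^N\setminus B_{2\rho}(x_0)\big)$, with the latter carrying a factor $2$.

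The core of the argument is the pointwise algebraic inequality for the diagonal integrand: writing $a = u(x)+h$, $b = u(y)+h$, one has
\[
(b-a)\left(\frac{\psi^2(y)}{b}-\frac{\psi^2(x)}{a}\right) \;\le\; -\,c\,\psi(x)\psi(y)\left(\log\frac{a}{b}\right)^2 \;+\; C\,(\psi(x)-\psi(y))^2
\]
for absolute constants $c,C>0$; this is the standard Moser-type logarithmic estimate and follows by elementary calculus after normalizing $t = b/a$. Integrating against $\mu(x,dy)\,dx$ over $B_{2\rho}(x_0)^2$, using $\psi\equiv 1$ on $B_\rho(x_0)$ to bound below the logarithmic term, and controlling $\iint (\psi(x)-\psi(y))^2\,\mu(x,dy)dx$ by $\gamma(\mathfrak{C}_2,N,\alpha)\rho^{N-\alpha}$ via Lemma~\ref{D->B} (exactly as in the critical mass lemma) yields
\[
\int_{B_\rho(x_0)}\!\int_{B_\rho(x_0)}\left|\log\frac{u(x)+h}{u(y)+h}\right|^2\mu(x,dy)dx \;\le\; \gamma\,\rho^{N-\alpha} \;+\; C\cdot(\text{tail part}).
\]

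For the tail part, $2\int_{B_{2\rho}}\int_{\mathbb{R}^N\setminus B_{2\rho}} (u(y)-u(x))\big(\psi^2(x)(u(x)+h)^{-1}\big)\,\mu(x,dy)dx$, I would bound the integrand using $u(x)\ge 0$, $(u(x)+h)^{-1}\le h^{-1}$, and $u(y)-u(x) \le u(y) + h \le u_+(y) + u_-(y) + h$ — wait, more carefully: $u(y) - u(x) \le u_+(y) + h$ when $u(y)\ge 0$ but $u(y)$ may be negative outside $B_R$, so use $u(y)-u(x) \le u_+(y)$ and for the negative contribution $-u_-(y)\cdot(\text{negative})$; the net effect is a bound by $h^{-1}\int_{B_{2\rho}}\psi^2\,dx \cdot \sup_{x\in B_{2\rho}}\int_{\mathbb{R}^N\setminus B_{2\rho}}(u_+(y)+h)\,\mu(x,dy)$ plus a harmless term. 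Splitting $\mathbb{R}^N\setminus B_{2\rho}$ further: the contribution of $u_+$ near $B_R$ is absorbed since $u\ge 0$ there contributes with a favorable sign, and the genuinely nonlocal piece is estimated by Lemma~\ref{D2} (for the constant $h$, giving $\mathfrak{C}_2(2\rho)^{-\alpha}$) and by the definition of the tail with $r_1 = \tfrac78 R$ after noting $2\rho < R$, producing $h^{-1}\rho^{-\alpha}\rho^N \cdot (\rho/R)^\alpha\,{\rm Tail}(u_-;x_0,\tfrac78 R,R)$ up to constants. Collecting everything gives precisely the claimed bound $\gamma\rho^{N-\alpha}\big[h^{-1}(\rho/R)^\alpha\,{\rm Tail}(u_-;x_0,\tfrac78 R,R)+1\big]$.

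The main obstacle is bookkeeping in the tail term: one must carefully separate the part of $\mathbb{R}^N\setminus B_{2\rho}(x_0)$ lying inside $B_R(x_0)$ (where $u\ge 0$ makes $-u(y)\le 0$, so that contribution has a sign favorable to the super-solution inequality and can be discarded) from the exterior part $\mathbb{R}^N\setminus B_R(x_0)$ (which is exactly the tail), while simultaneously tracking the constant $h$ and ensuring the $\rho$-powers match. The pointwise logarithmic inequality itself is routine once the test function is fixed, and all measure estimates reduce to Lemma~\ref{D2} and Lemma~\ref{D->B}, which are already available.
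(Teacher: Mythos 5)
Your proposal is correct and follows essentially the same route as the paper: test with $\zeta^2/(u+h)$, split $\mathcal{E}^\mu(u,\varphi)$ via (\ref{S}) into a diagonal part handled by the elementary inequality $b+b^{-1}-2\geqslant(\log b)^2$ (yielding the squared logarithm minus the cutoff energy, which Lemma~\ref{D->B} bounds by $\gamma\rho^{N-\alpha}$) and a nonlocal part split at $\partial B_R(x_0)$ exactly as you describe, with $\frac{u(x)-u(y)}{u(x)+h}\leqslant 1$ inside $B_R(x_0)$ and $h^{-1}u_-(y)$ outside. The one detail to adjust is the cutoff's support: take $\mathrm{supp}\,\zeta\subset B_{\frac74\rho}(x_0)\subset B_{\frac78R}(x_0)$ rather than $B_{2\rho}(x_0)$, since for $\rho$ close to $R/2$ the latter is not contained in $B_{\frac78R}(x_0)$ and the supremum defining ${\rm Tail}(u_-;x_0,\tfrac78R,R)$ would not directly control your tail term.
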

\begin{proof}
For simplicity, we assume $x_0=0$. Let $h>0$ be a parameter and
take $\zeta:\mathbb{R}^N\rightarrow\mathbb{R}$ to be a function, such that $\text{supp}\,\zeta(\cdot)\subset B_{\frac{7}{4}\rho},~\|\zeta\|_\infty\leqslant 1,~\zeta(x)=1$ in $B_\rho$ and $|\nabla\zeta|\leqslant\gamma(N)\rho^{-1}$. According to assumption (\ref{D}) and Lemma~\ref{D->B}, we have
\begin{equation}\label{est5-5}
\sup_{x\in B_{2\rho}}\int_{B_{2\rho}}(\zeta(y)-\zeta(x))^2\,\mu(x,dy)\leqslant \gamma(\mathfrak{C}_2,N,\alpha)\rho^{-\alpha}.
\end{equation}
\par
We use in the weak formulation (\ref{def-solution}) of (\ref{equa}), the testing function $\varphi$ defined by $\varphi=\frac{\zeta^2}{u+h}.$
Note that since $u\geqslant 0$ in the support of $\zeta$, the test function is well-defined. Consequently, we get
\begin{align}\label{est16}
&\int_{B_{2\rho}}\int_{B_{2\rho}}(u(x)-u(y))\bigg(\frac{\zeta^2(y)}{u(y)+h}-\frac{\zeta^2(x)}{u(x)+h}\bigg)\,\mu(x,dy)dx\nonumber\\
&\qquad\quad\leqslant2\int_{B_{2\rho}}\int_{\mathbb{R}^N\backslash B_{2\rho}}(u(x)-u(y))\frac{\zeta^2(x)}{u(x)+h}\,\mu(x,dy)dx.
\end{align}
For convenience, we denote
\begin{align*}
&I_1=\int_{B_{2\rho}}\int_{B_{2\rho}}(u(x)-u(y))\bigg(\frac{\zeta^2(y)}{u(y)+h}-\frac{\zeta^2(x)}{u(x)+h}\bigg)\,\mu(x,dy)dx,\\
&I_2=\int_{B_{2\rho}}\int_{\mathbb{R}^N\backslash B_{2\rho}}(u(x)-u(y))\frac{\zeta^2(x)}{u(x)+h}\,\mu(x,dy)dx.
\end{align*}
Setting $$E:=E(x,y)=\frac{u(x)+h}{u(y)+h}\qquad\text{and}\qquad F:=F(x,y)=\frac{\zeta(x)}{\zeta(y)},$$ we obtain
\begin{align*}
I_1=&\int_{B_{2\rho}}\int_{B_{2\rho}}\zeta(x)\zeta(y)\bigg(\frac{E}{F}+\frac{F}{E}-F-\frac{1}{F}\bigg)\,\mu(x,dy)dx\\
=&\int_{B_{2\rho}}\int_{B_{2\rho}}\zeta(x)\zeta(y)\Bigg[\bigg(\frac{E}{F}+\frac{F}{E}-2\bigg)-\bigg(\sqrt{F}-\frac{1}{\sqrt{F}}\bigg)^2\Bigg]\,\mu(x,dy)dx.
\end{align*}
By Taylor's theorem, it holds that
$$e^a+e^{-a}-2=2\sum_1^\infty\frac{a^{2k}}{(2k)!}\geqslant a^2,\quad\forall\, a\in\mathbb{R}.$$
Let $b=e^a$, then
$$b+\frac{1}{b}-2\geqslant(\log b)^2\quad\forall\, b>0.$$
Therefore, setting $b=\frac{E}{F}$, we have
\begin{equation*}
\frac{E}{F}+\frac{F}{E}-2\geqslant\left(\log E-\log F\right)^{2}.
\end{equation*}
Furthermore, it is easy to verify
\begin{equation*}
\zeta(x)\zeta(y)\bigg(\sqrt{F}-\frac{1}{\sqrt{F}}\bigg)^2=(\zeta(x)-\zeta(y))^2.
\end{equation*}
Thus, noticing $\zeta=1$ in $B_\rho$, we have
\begin{align}\label{est17}
I_1\geqslant&\int_{B_{2\rho}}\int_{B_{2\rho}}\zeta(x)\zeta(y)\left(\log E-\log F\right)^{2}\,\mu(x,dy)dx\nonumber\\
&-\int_{B_{2\rho}}\int_{B_{2\rho}}(\zeta(x)-\zeta(y))^2\,\mu(x,dy)dx\nonumber\\
=&\int_{B_{2\rho}}\int_{B_{2\rho}}\zeta(x)\zeta(y)\left(\log \frac{u(x)+h}{\zeta(x)}-\log \frac{u(y)+h}{\zeta(y)}\right)^2\,\mu(x,dy)dx\nonumber\\
&-\int_{B_{2\rho}}\int_{B_{2\rho}}(\zeta(x)-\zeta(y))^2\,\mu(x,dy)dx\nonumber\\
\geqslant&\int_{B_\rho}\int_{B_\rho}\left|\log\left(\frac{u(x)+h}{u(y)+h}\right)\right|^2\,\mu(x,dy)dx-\int_{B_{2\rho}}\int_{B_{2\rho}}(\zeta(x)-\zeta(y))^2\,\mu(x,dy)dx.
\end{align}

\par
For the right-hand side of (\ref{est16}), namely $I_2$, we can proceed as follows. First of all, notice that $u(y)\geqslant 0$ when $y\in B_R,$ and hence
\begin{equation*}
\frac{u(x)-u(y)}{u(x)+h}\leqslant 1\qquad\forall~ x\in B_{2\rho},~y\in B_R.
\end{equation*}
Moreover, when $y\in\mathbb{R}^N\backslash B_R$,
\begin{equation*}
u(x)-u(y)\leqslant[u(x)+u_-(y)]\qquad\forall~x\in B_{2\rho}.
\end{equation*}
Therefore, by splitting the integral region and using the fact that $\frac{u(x)}{u(x)+h}<1$, we have
\begin{align}\label{est18}
I_2\leqslant&\int_{B_{2\rho}}\int_{B_R\backslash B_{2\rho}}\frac{u(x)-u(y)}{u(x)+h}\zeta^2(x)\,\mu(x,dy)dx\nonumber\\
&+\int_{B_{2\rho}}\int_{\mathbb{R}^N\backslash B_{R}}\frac{u(x)-u(y)}{u(x)+h}\zeta^2(x)\,\mu(x,dy)dx\nonumber\\
\leqslant& \int_{B_{2\rho}}\int_{B_R\backslash B_{2\rho}}\zeta^2(x)\mu(x,dy)dx+\int_{B_{2\rho}}\int_{\mathbb{R}^N\backslash B_{R}}\frac{u(x)\zeta^2(x)}{u(x)+h}\mu(x,dy)dx\nonumber\\
&+\int_{B_{2\rho}}\int_{\mathbb{R}^N\backslash B_{R}}\frac{u_-(y)\zeta^2(x)}{u(x)+h}\mu(x,dy)dx\nonumber\\
\leqslant&\int_{B_{2\rho}}\int_{\mathbb{R}^N\backslash B_{2\rho}}\zeta^2(x)\,\mu(x,dy)dx+h^{-1}\int_{B_{2\rho}}\int_{\mathbb{R}^N\backslash B_{R}}u_-(y)\zeta^2(x)\,\mu(x,dy)dx.
\end{align}
By (\ref{D}), Lemma~\ref{D2} and the fact that the support of $\zeta$ is included in $B_{\frac{7}{4}\rho}$, we have
\begin{align}\label{est19}
\int_{B_{2\rho}}\int_{\mathbb{R}^N\backslash B_{2\rho}}\zeta^2(x)\,\mu(x,dy)dx\leqslant&~\gamma(N)\rho^N\underset{x\in B_{\frac{7}{4}\rho}}{\text{sup}}\int_{\mathbb{R}^N\backslash B_{2\rho}}\mu(x,dy)\nonumber\\
\leqslant&~\gamma(\mathfrak{C}_2,N)\rho^{N-\alpha}
\end{align}
and
\begin{align}\label{est20}
\int_{B_{2\rho}}\int_{\mathbb{R}^N\backslash B_{R}}u_-(y)\zeta^2(x)\,\mu(x,dy)dx\leqslant&~\gamma(N)|B_{2\rho}|R^{-\alpha}{\rm Tail}(u_-;\tfrac74\rho,R)\nonumber\\
\leqslant&~\gamma(N)\rho^{N-\alpha}\left(\frac{\rho}{R}\right)^\alpha{\rm Tail}(u_-;\tfrac78R,R).
\end{align}
By combining (\ref{est18}) with (\ref{est19}) and (\ref{est20}), we obtain
\[I_2\leqslant \gamma\rho^{N-\alpha}\left[1+h^{-1}\left(\frac{\rho}{R}\right)^\alpha{\rm Tail}(u_-;\tfrac78R,R)\right],\]
which together with (\ref{est17}) in (\ref{est16}) yields
\begin{align*}
&\int_{B_{\rho}}\int_{B_{\rho}}\left|\log\left(\frac{u(x)+h}{u(y)+h}\right)\right|^2\mu(x,dy)dx\\
&~\quad\leqslant\int_{B_{2\rho}}\int_{B_{2\rho}}|\zeta(x)-\zeta(y)|^2\,\mu(x,dy)dx
+\gamma\rho^{N-\alpha}\left[1+h^{-1}\left(\frac{\rho}{R}\right)^\alpha{\rm Tail}(u_-;\tfrac78R,R)\right].
\end{align*}
\par
Finally, by (\ref{est5-5}), we get the following estimate
\[\int_{B_{2\rho}}\int_{B_{2\rho}}|\zeta(x)-\zeta(y)|^2\,\mu(x,dy)dx\leqslant \gamma(\mathfrak{C}_2,N,\alpha)\rho^{-\alpha}|B_{2\rho}|=\gamma(\mathfrak{C}_2,N,\alpha)\rho^{N-\alpha}\]
to conclude the proof.
\end{proof}

Now we are ready to deduce the measure shrinking lemma.
\begin{lemma}[measure shrinking]\label{Mea-Shri}
Let $\mu$ satisfy (\ref{S}), (\ref{A-poin}) and (\ref{D}). Let $B_R(x_0)\subset E,~\beta\in(0,1]$ and $M>0$. Let $u$ be a local weak super-solution of (\ref{equa}) in $E$, such that $u\geqslant 0$ in $B_R(x_0)$. There exists $\gamma>1$ depending only on the {\it data}, such that for any $0<\rho<R$, if
\[|\{u\geqslant M\}\cap B_\rho(x_0)|\geqslant\beta|B_\rho(x_0)|,\]
then for any $\sigma\in(0,1),$ either
\[\left(\frac{\rho}{R}\right)^\alpha{\rm Tail}(u_-;x_0,\tfrac78R,R)>\sigma M\]
or
\[|\{u\leqslant \sigma M\}\cap B_\rho(x_0)|\leqslant\frac{\gamma}{\beta}\bigg[\log\bigg(\frac{1+\sigma}{2\sigma}\bigg)\bigg]^{-1}|B_\rho(x_0)|.\]
\end{lemma}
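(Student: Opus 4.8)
The substantive input here is the logarithmic estimate of Proposition~\ref{log-esti}: the lemma follows from it by combining with the Poincar\'e inequality~(\ref{A-poin}), and I do not expect any serious difficulty beyond the bookkeeping described at the end. Assume $x_0=0$ and suppose that
\[\left(\tfrac{\rho}{R}\right)^\alpha{\rm Tail}(u_-;\tfrac78R,R)\leqslant\sigma M,\]
since otherwise the first alternative holds and there is nothing to prove; fix $h:=\sigma M$. The plan is to work with the truncated logarithm
\[v:=\left(\log\frac{M+h}{u_++h}\right)_{+},\qquad u_+:=\max\{u,0\},\]
which is bounded ($0\leqslant v\leqslant\log\frac{1+\sigma}{\sigma}$) and, being a bounded locally Lipschitz function of $u$ — the truncation $u_+$ keeping the logarithm well defined on all of $\mathbb{R}^N$ — satisfies $\mathcal{E}_D^\mu(v,v)\leqslant h^{-2}\mathcal{E}_D^\mu(u,u)<\infty$ for $\bar{D}\subset E$, so that $v\in H_{\mathrm{loc}}^\mu(E)$ and~(\ref{A-poin}) may be applied to it. I record three features of $v$: (i) $v\equiv0$ on $\{u\geqslant M\}$; (ii) on $B_R\cap\{u\leqslant\sigma M\}$ one has $u+h\leqslant2\sigma M$ and $M+h=(1+\sigma)M$, hence $v\geqslant L:=\log\frac{1+\sigma}{2\sigma}>0$ there; (iii) on $B_\rho\subset B_R$ we have $u_+=u\geqslant0$, and since $s\mapsto(s)_+$ is $1$-Lipschitz, $|v(x)-v(y)|\leqslant\big|\log\frac{u(x)+h}{u(y)+h}\big|$ for $x,y\in B_\rho$.

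By feature~(iii), Proposition~\ref{log-esti} applied with this $h$, and the standing tail bound (which makes the bracket in that proposition at most $2$), I obtain
\[\mathcal{E}_{B_\rho}^\mu(v,v)\leqslant\int_{B_\rho}\int_{B_\rho}\Big|\log\tfrac{u(x)+h}{u(y)+h}\Big|^2\,\mu(x,dy)dx\leqslant2\gamma\rho^{N-\alpha}.\]
Feeding this into~(\ref{A-poin}) on $B_\rho$ and using $\rho^N\simeq|B_\rho|$ gives $\int_{B_\rho}(v-[v]_{B_\rho})^2\,dx\leqslant\gamma_1|B_\rho|$ with $\gamma_1=\gamma_1({\it data})$. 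On one hand, feature~(i) and the measure density hypothesis yield
\[\gamma_1|B_\rho|\geqslant\int_{\{u\geqslant M\}\cap B_\rho}(v-[v]_{B_\rho})^2\,dx=[v]_{B_\rho}^2\,|\{u\geqslant M\}\cap B_\rho|\geqslant\beta\,[v]_{B_\rho}^2\,|B_\rho|,\]
so $[v]_{B_\rho}\leqslant(\gamma_1/\beta)^{1/2}$. On the other hand, feature~(ii) and $v\geqslant0$ give $[v]_{B_\rho}\geqslant\frac{L}{|B_\rho|}\,|\{u\leqslant\sigma M\}\cap B_\rho|$. Combining the two bounds on $[v]_{B_\rho}$ and using $\beta\leqslant1$,
\[|\{u\leqslant\sigma M\}\cap B_\rho|\leqslant\frac{(\gamma_1/\beta)^{1/2}}{L}\,|B_\rho|\leqslant\frac{\gamma}{\beta}\Big[\log\frac{1+\sigma}{2\sigma}\Big]^{-1}|B_\rho|,\]
with $\gamma:=\gamma_1^{1/2}$, which is the asserted estimate.

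The only place that genuinely needs attention is the range of $\rho$: Proposition~\ref{log-esti} is stated for $\rho<R/2$, while the lemma allows $\rho<R$. I would carry out the argument above for $0<\rho<R/2$ and then dispose of $R/2\leqslant\rho<R$ by a routine adjustment (re-deriving the logarithmic estimate at the scale actually available, or a covering argument), which leaves the structure untouched. Apart from this, I foresee no real obstacle: once Proposition~\ref{log-esti} is granted, the lemma is just the elementary two-sided comparison of the mean value $[v]_{B_\rho}$ with the level sets $\{u\geqslant M\}$ and $\{u\leqslant\sigma M\}$ within $B_\rho$.
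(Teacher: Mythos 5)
Your proof is correct and takes essentially the same route as the paper's: both define the truncated logarithm $v=\log_+\big(\tfrac{M+h}{u+h}\big)$ with $h=\sigma M$, combine Proposition~\ref{log-esti} with the Poincar\'e inequality (\ref{A-poin}) to control the mean oscillation of $v$ on $B_\rho$, and then play off $v\equiv 0$ on $\{u\geqslant M\}$ against $v\geqslant\log\tfrac{1+\sigma}{2\sigma}$ on $\{u\leqslant\sigma M\}$ (your bookkeeping via the $L^2$ deviation on the zero set, rather than the paper's $L^1$ deviation plus H\"older, even yields the slightly better factor $\beta^{-1/2}$). The $\rho<R/2$ versus $\rho<R$ mismatch you flag is genuine, but it is equally present and unaddressed in the paper's own proof, so your proposed routine fix is all that is needed.
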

\begin{proof}
For simplicity, we assume $x_0=0.$ As a restatement, we will show that there exists $\gamma$, such that if
\begin{equation}\label{condi1}
\left(\frac{\rho}{R}\right)^\alpha{\rm Tail}(u_-;\tfrac78R,R)\leqslant\sigma M
\end{equation}
and if
\begin{equation}\label{condi2}
|\{u\geqslant M\}\cap B_\rho|\geqslant\beta|B_\rho|,
\end{equation}
then
\begin{equation}\label{resul1}
|\{u\leqslant \sigma M\}\cap B_\rho|\leqslant\frac{\gamma}{\beta}\bigg[\log\bigg(\frac{1+\sigma}{2\sigma}\bigg)\bigg]^{-1}|B_\rho|.
\end{equation}
Consider the function $v$ defined as follows
\begin{equation}\label{v-def}
v:=\log_+\left(\frac{M+h}{u+h}\right),
\end{equation}
where $h>0$ is to be chosen.
By the assumption (\ref{A-poin}) of $\mu$ we get
\begin{equation*}
\int_{B_\rho}(v-[v]_{B_\rho})^2\,dx\leqslant \mathfrak{C}_1\rho^\alpha\int_{B_\rho}\int_{B_\rho}|v(x)-v(y)|^2\,\mu(x,dy)dx.
\end{equation*}
By the definition of $v$, it follows that
\[\int_{B_\rho}\int_{B_\rho}|v(x)-v(y)|^2\,\mu(x,dy)dx\leqslant\int_{B_{\rho}}\int_{B_{\rho}}\left|\log\left(\frac{u(x)+h}{u(y)+h}\right)\right|^2\,\mu(x,dy)dx.\]
Combining the previous two estimates and applying Proposition~\ref{log-esti}, we get
\[\fint_{B_\rho}|v-[v]_{B_\rho}|^2\,dx\leqslant \gamma\left(h^{-1}\left(\frac{\rho}{R}\right)^\alpha{\rm Tail}(u_-;\tfrac78R,R)+1\right)\]
for some $\gamma=\gamma(\mathfrak{C}_1,\mathfrak{C}_2,N)$.
Therefore, choosing $h=\sigma M$ and using (\ref{condi1}), we get from the last line that
\begin{equation}\label{est21}
\fint_{B_\rho}|v-[v]_{B_\rho}|^2\,dx\leqslant \gamma,
\end{equation}
where the constant $\gamma$ depends only on $\mathfrak{C}_1,\mathfrak{C}_2$ and $N$.
\par
Using the definition of $v$ given in (\ref{v-def}), we obtain
\begin{equation}\label{est43}
|\{v=0\}\cap B_\rho|\cdot[v]_{B_\rho}=\int_{\{v=0\}\cap B_\rho}|v-[v]_{B_\rho}|\,dx\leqslant\int_{B_\rho}|v-[v]_{B_\rho}|\,dx.
\end{equation}
By (\ref{condi2}), (\ref{est43}), H\"older's inequality and (\ref{est21}) we have
\begin{equation*}
\beta\fint_{B_\rho}v\,dx\leqslant\frac{|\{v=0\}\cap B_\rho|}{|B_\rho|}\fint_{B\rho}v\,dx\leqslant\fint_{B_\rho}|v-[v]_{B_\rho}|\,dx\leqslant\gamma\bigg(\fint_{B_\rho}|v-[v]_{B_\rho}|^2\,dx\bigg)^{\frac{1}{2}}\leqslant\gamma.
\end{equation*}
The left-hand side of the last inequality has the lower bound as follows,
\begin{equation*}
\beta\fint_{B_\rho}v\,dx\geqslant\frac{\beta}{|B_\rho|}\int_{\{u\leqslant\sigma M\}\cap B_\rho}\log_+\left(\frac{M+h}{u+h}\right)\,dx\geqslant\beta\frac{|\{u\leqslant\sigma M\}\cap B_\rho|}{|B_\rho|}\log\left(\frac{1+\sigma}{2\sigma}\right).
\end{equation*}
Combining the last two inequalities yields (\ref{resul1}). We have completed the proof.
\end{proof}
Combining the critical mass lemma and the measure shrinking lemma yields the expansion of positivity lemma as follows.
\begin{proposition}[expansion of positivity]\label{Exp-Pos}
Let $\mu$ satisfy (\ref{S}), (\ref{A-poin}) and (\ref{D}). Let $B_R(x_0)\subset E$ and $0<\rho<R/4$. Let $u$ be a local weak super-solution of (\ref{equa}), such that $u\geqslant 0$ in $B_R(x_0)$. Suppose for some constants $\beta\in(0,1]$ and $M>0,$ there holds
\[|\{u\geqslant M\}\cap B_\rho(x_0)|\geqslant\beta|B_\rho(x_0)|.\]
Then there exists a constant $\sigma\in(0,1)$ depending only on the \textit{data} and $\beta$, such that either
\[\left(\frac{\rho}{R}\right)^\alpha{\rm Tail}(u_-;x_0,\tfrac78R,R)>\sigma M\]
or
\[u\geqslant\sigma M\quad\text{a.e. in}\quad B_{2\rho}(x_0).\]
\end{proposition}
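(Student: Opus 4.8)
The strategy is to run the two preliminary lemmas on the dilated ball $B_{4\rho}(x_0)$ rather than on $B_\rho(x_0)$, so that the critical mass lemma, which transfers positivity from $B_{\rho'}(x_0)$ to $B_{\rho'/2}(x_0)$, yields positivity exactly on $B_{2\rho}(x_0)$. Since $0<\rho<R/4$, we have $B_{4\rho}(x_0)\subset B_R(x_0)\subset E$ and $u\ge0$ on $B_{4\rho}(x_0)$; moreover $|B_\rho(x_0)|=4^{-N}|B_{4\rho}(x_0)|$, so the density hypothesis rewrites as
\[
|\{u\ge M\}\cap B_{4\rho}(x_0)|\ge\beta|B_\rho(x_0)|=4^{-N}\beta\,|B_{4\rho}(x_0)|.
\]

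First I would apply Lemma~\ref{Mea-Shri} on $B_{4\rho}(x_0)$ with density constant $\tilde\beta:=4^{-N}\beta\in(0,1]$ and a small parameter $\sigma_1\in(0,1)$ still to be fixed. It yields the alternative: either
\[
\Big(\tfrac{4\rho}{R}\Big)^\alpha{\rm Tail}(u_-;x_0,\tfrac78R,R)>\sigma_1 M,
\]
or
\[
|\{u\le\sigma_1 M\}\cap B_{4\rho}(x_0)|\le\frac{\gamma\,4^N}{\beta}\Big[\log\Big(\tfrac{1+\sigma_1}{2\sigma_1}\Big)\Big]^{-1}|B_{4\rho}(x_0)|.
\]
Because $\log\big(\tfrac{1+\sigma_1}{2\sigma_1}\big)\to\infty$ as $\sigma_1\to0^+$, I can now choose $\sigma_1\in(0,1)$, depending only on the \textit{data} and $\beta$, small enough that $\frac{\gamma\,4^N}{\beta}\big[\log\big(\tfrac{1+\sigma_1}{2\sigma_1}\big)\big]^{-1}\le\nu$, where $\nu\in(0,1)$ is the constant furnished by Lemma~\ref{Crit-Densi}. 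In the second alternative we then have
\[
|\{u<\sigma_1 M\}\cap B_{4\rho}(x_0)|\le|\{u\le\sigma_1 M\}\cap B_{4\rho}(x_0)|\le\nu|B_{4\rho}(x_0)|.
\]

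Next I would invoke Lemma~\ref{Crit-Densi} on $B_{4\rho}(x_0)$ with the level $\sigma_1 M$ in place of $M$: it gives that either $\big(\tfrac{4\rho}{R}\big)^\alpha{\rm Tail}(u_-;x_0,\tfrac78R,R)>\sigma_1 M$, or $u\ge\tfrac12\sigma_1 M$ a.e. in $B_{2\rho}(x_0)$. Assembling the alternatives of the two steps, in every case either $\big(\tfrac{4\rho}{R}\big)^\alpha{\rm Tail}(u_-;x_0,\tfrac78R,R)>\sigma_1 M$ or $u\ge\tfrac12\sigma_1 M$ a.e. in $B_{2\rho}(x_0)$. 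Since $\alpha<2$ forces $4^\alpha<16$, the first alternative implies $\big(\tfrac{\rho}{R}\big)^\alpha{\rm Tail}(u_-;x_0,\tfrac78R,R)>\tfrac{\sigma_1}{16}M$; hence the choice $\sigma:=\sigma_1/16$, which depends only on the \textit{data} and $\beta$, makes both alternatives imply the asserted dichotomy, completing the argument.

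The proof is mostly bookkeeping: the three balls $B_\rho(x_0)\subset B_{2\rho}(x_0)\subset B_{4\rho}(x_0)$ are concentric, so no covering argument is needed and the structural input is already encoded in Lemmas~\ref{Crit-Densi} and~\ref{Mea-Shri}. The one genuinely quantitative point is the calibration of $\sigma_1$ against $\nu$, where the logarithmic divergence built into the measure shrinking lemma is indispensable; everything else reduces to tracking the tail prefactors $(\tfrac{4\rho}{R})^\alpha$ versus $(\tfrac{\rho}{R})^\alpha$ and the dimensional factor $4^{-N}$ lost under the dilation.
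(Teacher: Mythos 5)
Your proposal is correct and follows essentially the same route as the paper: dilate to $B_{4\rho}(x_0)$ with density $4^{-N}\beta$, calibrate the measure shrinking lemma's logarithmic factor against the critical mass constant $\nu$, apply the critical mass lemma at level $\sigma_1 M$ to land on $B_{2\rho}(x_0)$, and absorb the tail prefactor $4^\alpha$ and the factor $\tfrac12$ into the final $\sigma$. The paper's final choice is $\sigma=\min\{\sigma_1/4^\alpha,\sigma_1/2\}$ rather than your $\sigma_1/16$, but both work.
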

\begin{proof}
For simplicity, we assume $x_0=0.$ Since
\[|\{u\geqslant M\}\cap B_{4\rho}|\geqslant|\{u\geqslant M\}\cap B_\rho|\geqslant\beta|B_\rho|=4^{-N}\beta|B_{4\rho}|,\]
using Lemma~\ref{Mea-Shri}, if
\begin{equation}\label{est6-2}
4^\alpha\left(\frac{\rho}{R}\right)^\alpha{\rm Tail}(u_-;\tfrac78R,R)\leqslant\sigma M,
\end{equation}
we have
\begin{equation*}
|\{u\leqslant \sigma M\}\cap B_{4\rho}|\leqslant4^N\frac{\gamma}{\beta}\bigg[\log\bigg(\frac{1+\sigma}{2\sigma}\bigg)\bigg]^{-1}|B_{4\rho}|,
\end{equation*}
where $\gamma=\gamma({\it data})>1$ is the constant from Lemma~\ref{Mea-Shri}. Take $\sigma$ small enough to satisfy
$$4^N\frac{\gamma}{\beta}\left[\log\left(\frac{1+\sigma}{2\sigma}\right)\right]^{-1}\leqslant\nu,$$
where $\nu=\nu({\it data})\in(0,1)$ is the constant from Lemma~\ref{Crit-Densi}. Using Lemma~\ref{Crit-Densi}, we have if (\ref{est6-2}) holds, then
\[u\geqslant\frac{\sigma M}{2}\quad\text{a.e. in}~B_{2\rho}.\]
Redefining $\min\{\frac{\sigma}{4^\alpha},\frac{\sigma}{2}\}$ as $\sigma$ finishes the proof.
\end{proof}
\section{Local H\"older continuity}
This section is devoted to the proof of Theorem~\ref{Holder}.
\par
Without loss of generality, we take $x_0=0$ and set
\begin{equation*}
\mu^+=\underset{B_R}{\text{sup}}~u,\quad \mu^-=\underset{B_R}{\text{inf}}~u.
\end{equation*}
Obviously, $\underset{B_R}{\text{osc}}~u=\mu^+-\mu^-\leqslant\omega$, where
\begin{equation*}
\omega=2~\underset{B_R}{\rm{sup}}~|u|+{\rm Tail}(|u|;\tfrac78R,R).
\end{equation*}
From now on, let $\sigma=\sigma({\it data})\in (0,1)$ be determined in Proposition~\ref{Exp-Pos} with $\beta=\frac{1}{2}.$ For some $c\in(0,\frac{1}{4})$ to be chosen, consider two alternatives
\begin{equation*}
\begin{cases}
|\{u(\cdot)-\mu^-\geqslant\frac{1}{4}\omega\}\cap B_{cR}|\geqslant\frac{1}{2}|B_{cR}|,\\
|\{\mu^+-u(\cdot)\geqslant\frac{1}{4}\omega\}\cap B_{cR}|\geqslant\frac{1}{2}|B_{cR}|.
\end{cases}
\end{equation*}
Assuming $\mu^+-\mu^-\geqslant\frac{1}{2}\omega,$ one of the two alternatives must hold. Let us suppose the first alternative holds for instance.
Proposition~\ref{Exp-Pos} is applied to $u-\mu^-$ instead of $u$ with $\beta=\frac{1}{2},~M=\frac{1}{4}\omega,~\rho=cR$. Then we obtain that either
\begin{equation}\label{est6}
c^\alpha{\rm Tail}((u-\mu^-)_-;\tfrac78R,R)>\frac{1}{4}\sigma\omega
\end{equation}
or
\[u-\mu^-\geqslant\frac{\sigma}{4}\omega\quad\text{a.e. in}~B_{cR},\]
which gives the reduction of oscillation
\begin{equation}\label{est7}
\underset{B_{cR}}{\text{osc}}~u\leqslant\mu^+-\mu^--\frac{1}{4}\sigma\omega\leqslant\Big(1-\frac{1}{4}\sigma\Big)\omega=:\omega_1.
\end{equation}
The number $c$ is chosen to ensure that (\ref{est6}) does not happen. Indeed, we may first estimate by using (\ref{D}), Lemma~\ref{D2} and the definition of $\omega$,
\begin{equation*}
{\rm Tail}((u-\mu^-)_-;\tfrac78R,R)
\leqslant\mathfrak{C}_2|\mu^-| R^\alpha(R/8)^{-\alpha}+{\rm Tail}(u_-;\tfrac78R,R)\leqslant\gamma(\mathfrak{C}_2)\omega.
\end{equation*}
Then, we choose
\[c^\alpha \gamma(\mathfrak{C}_2)\omega\leqslant\frac{1}{4}\sigma\omega\quad\text{i.e.}\quad c\leqslant\Big(\frac{\sigma}{4\gamma}\Big)^{1/\alpha},\]
such that (\ref{est6}) does not occur. If $\mu^+-\mu^-<\omega/2,$ there holds
\[\underset{B_{cR}}{\text{osc}}~u<\underset{B_{R}}{\text{osc}}~u<\frac{\omega}{2}<\Big(1-\frac{1}{4}\sigma\Big)\omega\]
because of $c\in(0,1/4)$ and $\sigma\in(0,1).$ That means (\ref{est7}) still holds when $\mu^+-\mu^-<\frac12\omega.$
\par
Now, we may proceed by induction. Suppose up to $i=1,\dots,j$, we have built
\begin{equation*}
\begin{cases}
r_0=R,~~~r_i=cr_{i-1},~~~B_i=B_{r_i},\\
\omega_0=\omega,~~\omega_i=(1-\frac \sigma 8)~\omega_{i-1},\\
\mu_i^+=\underset{B_i}{\text{sup}}~u,~\mu_i^-=\underset{B_i}{\text{inf}}~u,~\underset{B_i}{\text{osc}}~u\leqslant\omega_i.
\end{cases}
\end{equation*}
The induction argument will show that the above oscillation estimate continues to hold for the $(j+1)$-th step. Let $\sigma$ be fixed as before, whereas $c\in(0,1/4)$ is subject to a further choice. We consider two alternatives
\begin{equation*}
\begin{cases}
|\{u(\cdot)-\mu_j^-\geqslant\frac{1}{4}\omega_j\}\cap B_{cr_j}|\geqslant\frac{1}{2}|B_{cr_j}|,\\
|\{\mu_j^+-u(\cdot)\geqslant\frac{1}{4}\omega_j\}\cap B_{cr_j}|\geqslant\frac{1}{2}|B_{cr_j}|.
\end{cases}
\end{equation*}
Like in the first step, we may assume $\mu_j^+-\mu_j^-\geqslant\frac{1}{2}\omega_j,$ so that one of the two alternatives must hold. Otherwise, the case $\mu_j^+-\mu_j^-<\frac{1}{2}\omega_j$ can be trivially incorporated into the forthcoming oscillation estimate (\ref{est9}).
\par
Let us suppose the first case holds for instance. Proposition~\ref{Exp-Pos} is applied in $B_j$ to $u-\mu_j^-$ intead of $u$ with $\beta=\frac{1}{2},~M=\frac{1}{4}\omega_j,~\rho=cr_j$. Then we obtain that either
\begin{equation}\label{est8}
c^\alpha{\rm Tail}((u-\mu_j^-)_-;\tfrac78r_j,r_j)>\frac{1}{4}\sigma\omega_j
\end{equation}
or
\begin{equation*}
u-\mu_j^-\geqslant\frac{1}{4}\sigma\omega_j \quad\text{a.e. in}~B_{cr_j},
\end{equation*}
which, thanks to the $j$-th induction assumption, gives the reduction of oscillation
\begin{equation}\label{est9}
\underset{B_{cr_j}}{\text{osc}}u\leqslant\mu_j^+-\mu_j^--\frac{1}{4}\sigma\omega_j\leqslant\Big(1-\frac{1}{4}\sigma\Big)\omega_j=:\omega_{j+1}.
\end{equation}
The final choice of $c$ is made to ensure that (\ref{est8}) does not happen, independent of $j$. In order for that, we first rewrite the tail as follows:
\begin{align*}
&{\rm Tail}((u-\mu_j^-)_-;\tfrac78r_j,r_j)\\
&~\quad=r_j^\alpha\underset{x\in B_{\frac{7}{8}r_j}}{\text{sup}}\bigg[\int_{\mathbb{R}^N\backslash B_{R}}(u(y)-\mu_j^-)_-\,\mu(x,dy)+\sum_{i=1}^{j}\int_{B_{i-1}\backslash B_i}(u(y)-\mu_j^-)_-\,\mu(x,dy)\bigg].
\end{align*}
The first integral is estimated by using (\ref{D}), Lemma~\ref{D2} and the definition of $\omega$. Indeed, noticing that $r_j\leqslant R$, we have
\begin{eqnarray*}
\underset{x\in B_{\frac{7}{8}r_j}}{\text{sup}}\int_{\mathbb{R}^N\backslash B_{R}}(u(y)-\mu_j^-)_-\,\mu(x,dy)\leqslant\frac{8^{\alpha}\mathfrak{C}_2}{R^{\alpha}}|\mu_j^-|+\frac{1}{R^{\alpha}}{\rm Tail}(u_-;\tfrac78R,R)\leqslant\gamma\frac{\omega} {R^{\alpha}},
\end{eqnarray*}
where $\gamma$ depends only on $\mathfrak{C}_2$.
Whereas the second integral is estimated by using the simple fact that, for $i=1,\dots,j,$
\[(u-\mu_j^-)_-\leqslant\mu_j^--\mu_{i-1}^-\leqslant\mu_j^+-\mu_{i-1}^-\leqslant\mu_{i-1}^+-\mu_{i-1}^-\leqslant\omega_{i-1}~\text{a.e. in}~B_{i-1}.\]
Using also the assumption (\ref{D}) of $\mu$ and Lemma~\ref{D2}, we have
\begin{align*}
\underset{x\in B_{\frac{7}{8}r_j}}{\text{sup}}\int_{B_{i-1}\backslash B_i}(u(y)-\mu_j^-)_-\,\mu(x,dy)\leqslant&~\underset{x\in B_{\frac{7}{8}r_i}}{\text{sup}}\int_{B_{i-1}\backslash B_i}\omega_{i-1}\,\mu(x,dy)\\
\leqslant&~\mathfrak{C}_2\omega_{i-1}\left(\frac{r_i}{8}\right)^{-\alpha}\leqslant\gamma\frac{\omega_{i-1}}{r_i^{\alpha}},
\end{align*}
where $\gamma$ depends only on $\mathfrak{C}_2$. Combine them and further estimate the tail by
\begin{align*}
{\rm Tail}((u-\mu_j^-)_-;\tfrac78r_j,r_j)
\leqslant&~\gamma\bigg(r_j^\alpha\frac{\omega}{R^\alpha}+r_j^\alpha\sum_{i=1}^{j}\frac{\omega_{i-1}}{r_i^\alpha}\bigg)\\
\leqslant&~\gamma\omega_j\bigg[\Big(1-\frac{1}{4}\sigma\Big)^{-j}c^{j\alpha}+\sum_{i=1}^{j}\Big(1-\frac{1}{4}\sigma\Big)^{i-j-1}c^{(j-i)\alpha}\bigg]\\
\leqslant&~\gamma\omega_j\bigg[\sum_{i=0}^{j}\Big(1-\frac{1}{4}\sigma\Big)^{i-j-1}c^{(j-i)\alpha}\bigg].
\end{align*}
The summation in the last display is bounded by $2(1-\frac14\sigma)^{-1}$ if we restrict the choice of $c$ by
\[c^\alpha\Big(1-\frac{1}{4}\sigma\Big)^{-1}\leqslant\frac{1}{2}\quad\text{i.e.}\quad c\leqslant 2^{-\frac{1}{\alpha}}\Big(1-\frac{1}{4}\sigma\Big)^{\frac{1}{\alpha}}.\]
Consequently, if we choose $c$ to verify
\[c^\alpha\gamma\Big(1-\frac{1}{4}\sigma\Big)^{-1}\leqslant\frac{\sigma}{4}\quad\text{i.e.}\quad c\leqslant\Big[\frac{\sigma}{4\gamma}\Big(1-\frac{1}{4}\sigma\Big)\Big]^{\frac{1}{\alpha}},\]
where $\gamma$ is from the last tail estimate, then (\ref{est8}) does not happen. The final choice of $c$ is as follows
\[c:=\min\bigg\{\frac{1}{8},\Big[\frac{\sigma}{4\gamma}\Big(1-\frac{1}{4}\sigma\Big)\Big]^{\frac{1}{\alpha}},2^{-\frac{1}{\alpha}}\Big(1-\frac{1}{4}\sigma\Big)^{\frac{1}{\alpha}}\bigg\}<\frac{1}{4}.\]
Letting $r_{j+1}=cr_j$, we obtain from (\ref{est9}) that
\[\underset{B_{j+1}}{\text{osc}}~u\leqslant\omega_{j+1},\]
which completes the induction argument. Now, we have
\[\underset{B_{c^iR}}{\text{osc}}~u\leqslant\Big(1-\frac{1}{4}\sigma\Big)^i\omega,~~i=0,1,2,\dots\]
Let $\rho\in(0,R)$. There must be some $i\in\mathbb{N}_0$ such that
\[c^{i+1}R\leqslant\rho< c^iR.\]
As a result, we have
\[\underset{B_{\rho}}{\text{osc}}~u\leqslant\underset{B_{c^iR}}{\text{osc}}~u\leqslant\Big(1-\frac{1}{4}\sigma\Big)^i\omega\]
and
\[i+1\leqslant\frac{\ln(\rho/R)}{\ln c},\]
which means
\[\underset{B_{\rho}}{\text{osc}}~u\leqslant\frac{4}{4-\sigma}\left(\frac{\rho}{R}\right)^{\frac{\ln(1-\sigma/4)}{\ln c}}\omega.\]
Setting $\alpha_*=\frac{\ln(1-\sigma/4)}{\ln c}$, we complete the proof.
\section{Weak Harnack inequalities}\label{weak-Harn}
In this section, we obtain a Weak Harnack inequality for local weak super-solution of (\ref{equa}), namely Theorem~\ref{weak-harn}. Before proving the main result, let us first introduce the following useful proposition.
\begin{proposition}[improved expansion of positivity]\label{Exp-Pos-im}
Let $\mu$ satisfy (\ref{S}), (\ref{A-poin}) and (\ref{D}). Let $B_R(x_0)\subset E,0<\rho<R/32,\beta\in(0,1)$ and $M>0$. Let $u$ be a local weak super-solution of (\ref{equa}) in $E$, such that $u\geqslant 0$ in $B_R(x_0)$. Suppose
\[|\{u\geqslant M\}\cap B_\rho(x_0)|\geqslant\beta|B_\rho(x_0)|.\]
Then, there exist constants $\xi\in(0,1)$ and $q>1$ depending only on {\it data}, such that either
\[\left(\frac{\rho}{R}\right)^\alpha{\rm Tail}(u_-;x_0,\tfrac78R,R)>\xi\beta^qM\]
or
\[u\geqslant{\xi}\beta^qM\quad\text{a.e. in}~B_{2\rho}(x_0).\]
\end{proposition}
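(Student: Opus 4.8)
The plan is to deduce Proposition~\ref{Exp-Pos-im} from three ingredients already at hand: the clustering lemma (Lemma~\ref{cluster}), the critical mass lemma (Lemma~\ref{Crit-Densi}), and the plain expansion of positivity (Proposition~\ref{Exp-Pos}) applied \emph{iteratively}. Normalise $x_0=0$ and assume the tail alternative of the statement fails, i.e.
\[
\left(\tfrac\rho R\right)^\alpha{\rm Tail}(u_-;\tfrac78R,R)\leqslant \xi\beta^qM ,
\]
where $\xi\in(0,1)$ and $q>1$, depending only on the data, are to be pinned down at the very end; every auxiliary tail alternative occurring along the way will have to be ruled out by this single smallness assumption, which dictates how small $\xi$ must be.

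\emph{Step 1: produce a nearly-full small ball.} Set $v:=\min\{u,M\}$, so that $v=M-(u-M)_-$ on $B_R(0)$, $0\leqslant v\leqslant M$, $\{v\geqslant M\}=\{u\geqslant M\}$, and $\{v\leqslant(1-\lambda)M\}=\{u\leqslant(1-\lambda)M\}$ for every $\lambda\in(0,1)$. Then $v\in H^\mu_{\rm loc}$ and hypothesis (a) of Lemma~\ref{cluster} holds on $B_\rho(0)$ (replacing $\beta$ by $\beta/2$ to absorb the strict inequality). For hypothesis (b) I would apply Proposition~\ref{DG-class} to the super-solution $u$ with $k=M$ and a cutoff $\zeta$ supported in $B_{2\rho}(0)$, equal to $1$ on $B_\rho(0)$, $|\nabla\zeta|\lesssim\rho^{-1}$, all inside $B_{4\rho}(0)\subset E$. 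Since $(u-M)_-\leqslant M$ on $B_R(0)$, the first term on the right of Proposition~\ref{DG-class} is $\lesssim M^2\rho^{N-\alpha}$ by (\ref{D}) and Lemma~\ref{D->B}, exactly as in (\ref{est5-5}); the nonlocal term is handled by splitting $\mathbb R^N\setminus B_{4\rho}$ into $B_R(0)\setminus B_{4\rho}$, where $(u-M)_-\leqslant M$ and Lemma~\ref{D2} gives $\lesssim M\rho^{-\alpha}$, and $\mathbb R^N\setminus B_R(0)$, where $(u-M)_-\leqslant M+u_-$ and the extra $u_-$ is dominated by $\rho^{-\alpha}M$ by the standing assumption. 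Because $v=M-(u-M)_-$ on $B_\rho(0)$, this yields
\[
\int_{B_\rho(0)}\int_{B_\rho(0)}|v(x)-v(y)|^2\,\mu(x,dy)dx\leqslant \Lambda M^2\rho^{N-\alpha}
\]
with $\Lambda=\Lambda(\text{data})$ \emph{independent of $\beta$}. Lemma~\ref{cluster} with $\lambda=\tfrac12$ and $\delta=\nu$ (the constant of Lemma~\ref{Crit-Densi}) then furnishes $x_1\in B_\rho(0)$ and $\eta=\eta(\text{data},\beta)\in(0,1)$ with $|\{u<\tfrac12M\}\cap B_{\eta\rho}(x_1)|\leqslant\nu|B_{\eta\rho}(x_1)|$; tracking the constant in Lemma~\ref{cluster} and using $2-\beta\leqslant2$ gives the quantitative bound $\eta\geqslant c_*\beta^{3/\alpha}$ with $c_*=c_*(\text{data})$.

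\emph{Steps 2--3: pointwise positivity, then doubling.} First apply Lemma~\ref{Crit-Densi} on $B_{\eta\rho}(x_1)$ at level $\tfrac12M$ with outer radius $\tfrac12R$ (note $B_{R/2}(x_1)\subset B_R(0)\subset E$ since $|x_1|<\rho<R/32$). Since $u_-\equiv0$ on $B_R(0)$ one has ${\rm Tail}(u_-;x_1,\tfrac{7R}{16},\tfrac R2)\leqslant2^{-\alpha}{\rm Tail}(u_-;0,\tfrac78R,R)$, so the tail alternative of Lemma~\ref{Crit-Densi} becomes $(\eta\rho/R)^\alpha{\rm Tail}(u_-;0,\tfrac78R,R)>\tfrac12M$; its left-hand side is $\leqslant\xi\beta^qM<\tfrac12M$ once $\xi\leqslant\tfrac12$, hence $u\geqslant\tfrac14M$ a.e.\ in $B_{\eta\rho/2}(x_1)$. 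Now set $r_j:=2^{j-1}\eta\rho$ (so $B_{r_0}(x_1)=B_{\eta\rho/2}(x_1)$) and iterate Proposition~\ref{Exp-Pos} on $B_{r_j}(x_1)$ with outer radius $\tfrac12R$ and $\beta=1$, using the \emph{fixed} constant $\sigma_0=\sigma_0(\text{data})\in(0,1)$; with the same tail transplantation this gives $u\geqslant\sigma_0^{j+1}\tfrac14M$ a.e.\ in $B_{r_{j+1}}(x_1)$ for each $j$ whose associated tail alternative is killed. Stopping at $J:=\lceil\log_2(3/\eta)\rceil$ we have $r_{J+1}\geqslant3\rho\geqslant2\rho+|x_1|$, so $B_{r_{J+1}}(x_1)\supseteq B_{2\rho}(0)$, while $r_J\leqslant3\rho<R/8$, so all $J+1$ applications are admissible. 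We conclude $u\geqslant\sigma_0^{J+1}\tfrac14M$ a.e.\ in $B_{2\rho}(0)$. Since $J+1\leqslant\log_2(3/\eta)+2$ and $\eta\geqslant c_*\beta^{3/\alpha}$, one gets $\sigma_0^{J+1}\geqslant c_5\,\beta^{(3/\alpha)\log_2(1/\sigma_0)}$ with $c_5=c_5(\text{data})>0$. Hence the proposition holds with $q:=\tfrac3\alpha\log_2(1/\sigma_0)+1>1$ and $\xi$ taken equal to a sufficiently small data-dependent multiple of $c_5$: small enough that $3^\alpha\xi\beta^q\leqslant\sigma_0^{J+1}/4$ (so all tail alternatives in Steps 2--3 are indeed killed) and that $\xi\beta^qM\leqslant\sigma_0^{J+1}\tfrac14M$ (so the last display is the asserted bound). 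Each such requirement is of the form $\xi\leqslant(\text{data constant})$, so a single $\xi=\xi(\text{data})\in(0,1)$ works.

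\emph{Expected main obstacle.} The crux is to keep $\Lambda$ in Lemma~\ref{cluster}(b), and therefore $\eta$, from degenerating as $\beta\downarrow0$: this forces the truncation level in Proposition~\ref{DG-class} to be exactly $k=M$ (rather than some $\beta$-dependent level) and the nonlocal tail contributions there to be absorbed against $M^2\rho^{N-\alpha}$ using precisely the assumed smallness $(\rho/R)^\alpha{\rm Tail}(u_-;\tfrac78R,R)\leqslant M$. The rest — transplanting the tail from $0$ to the shifted centres $x_1$, counting the $\simeq\log_2(1/\eta)$ doublings, and converting the clustering bound $\eta\gtrsim\beta^{3/\alpha}$ into the exponent $q$ — is bookkeeping, but it is exactly where the explicit power $\beta^q$ is manufactured out of the unquantified $\sigma(\beta)$ of Proposition~\ref{Exp-Pos}, and it must be arranged so that the finitely many smallness conditions on $\xi$ are mutually consistent.
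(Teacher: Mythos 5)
Your proposal is correct and follows essentially the same route as the paper: verify hypothesis (b) of the clustering lemma via the energy estimate at level $k=M$ with a $\beta$-independent constant, extract a small ball $B_{\eta\rho}(x_1)$ with $\eta\gtrsim\beta^{3/\alpha}$, and then iterate the expansion of positivity with $\beta=1$ about $\log_2(1/\eta)$ times to cover $B_{2\rho}(x_0)$, which converts $\eta\gtrsim\beta^{3/\alpha}$ into the power $\beta^q$. The only (immaterial) deviation is that you take $\delta=\nu$ in the clustering lemma and invoke the critical mass lemma directly for the first positivity step, where the paper takes $\delta=1/2$ and applies Proposition~\ref{Exp-Pos} with $\beta=1/2$.
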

\begin{remark}\upshape
The weak Harnack inequality indicates apostiori that $\sigma$ should have a power-like dependence on $\beta$ in Proposition~\ref{Exp-Pos}. However, as one can easily check, the dependence on $\beta$ of $\sigma$ in Proposition~\ref{Exp-Pos} has the form $\sigma\approx e^{-\gamma/\beta}$. Proposition~\ref{Exp-Pos-im} improves this dependence to a power-type, with the help of the previous clustering lemma (see Lemma~\ref{cluster}).
\end{remark}
\begin{proof}[Proof of the improved Expansion of Positivity]
For simplicity, we assume $x_0=0.$ As a restatement, we need to show that there exist $\xi\in(0,1),~q>1$ such that if
\begin{equation}\label{est-tailcontrol}
\left(\frac{\rho}{R}\right)^\alpha{\rm Tail}(u_-;\tfrac78R,R)\leqslant\xi\beta^qM,
\end{equation}
then
\[u\geqslant{\xi}\beta^qM\quad\text{a.e. in}~B_{2\rho}.\]
We want to apply the clustering lemma. For this, fix $\lambda=\delta=1/2$.
Let
\[u_M:=\min\{u,M\}=M-(u-M)_-.\]
It is helpful to observe that
\[\{u_M\geqslant\widetilde\lambda M\}=\{u\geqslant\widetilde\lambda M\}\quad\text{for any}\quad\widetilde\lambda\in(0,1].\]
From this and the condition in Proposition~\ref{Exp-Pos-im} we know that hypothesis (a) of Lemma~\ref{cluster} is satisfied for $u_M$.
Take $\zeta:\mathbb{R}^N\rightarrow\mathbb{R}$ to be a function such that $\text{supp}\,\zeta(\cdot)\subset B_{3\rho/2},~\|\zeta\|_\infty\leqslant 1,~\zeta(x)=1$ in $B_{\rho}$ and $|\nabla \zeta|\leqslant\gamma(N)\rho^{-1}$. According to assumption (\ref{D}) and Lemma~\ref{D->B}, we have
\begin{equation*}
\sup_{x\in B_{\rho}}\int_{B_{\rho}}(\zeta(y)-\zeta(x))^2\,\mu(x,dy)\leqslant\gamma(\mathfrak{C}_2,N,\alpha)\rho^{-\alpha}.
\end{equation*}
The following analysis is analogous to the proof of Lemma~\ref{Crit-Densi}.
\par
Since $u$ is a super-solution to (\ref{equa}) in $E$, by Proposition~\ref{DG-class} and (\ref{S}) we have
\begin{align*}
&\int_{B_\rho}\int_{B_\rho}|u_M(x)-u_M(y)|^2\,\mu(x,dy)dx\\
&~\quad=\int_{B_\rho}\int_{B_\rho}|(u(y)-M)_--(u(x)-M)_-|^2\,\mu(x,dy)dx\\
&~\quad\leqslant\gamma(\mathfrak{C}_2,N,\alpha)\rho^{-\alpha}\int_{B_{2\rho}}(u(x)-M)_-^2\,dx\\
&~\qquad+\gamma(2\rho)^{-\alpha}{\rm Tail}((u-M)_-;\tfrac32\rho,2\rho)\int_{B_{2\rho}}(u(x)-M)_-\,dx.
\end{align*}
Recalling that $u\geqslant 0$ in $B_R$ and $B_{4\rho}\subset B_R$, we have
\[\int_{B_{2\rho}}(u(x)-M)_-^2\,dx\leqslant M^2|B_{2\rho}|,\]
and
\begin{align*}
&(2\rho)^{-\alpha}{\rm Tail}((u-M)_-;\tfrac32\rho,2\rho)\int_{B_{2\rho}}(u(x)-M)_-\,dx\\
&~\qquad\leqslant M|B_{2\rho}|\left(2^\alpha\mathfrak{C}_2 M\rho^{-\alpha}+R^{-\alpha}{\rm Tail}(u_-;\tfrac38R,R)\right)\\
&~\qquad\leqslant M|B_{2\rho}|(2^\alpha\mathfrak{C}_2 M\rho^{-\alpha}+M\rho^{-\alpha})\\
&~\qquad\leqslant\gamma(\mathfrak{C}_2,\alpha)M^2\rho^{-\alpha}|B_{2\rho}|,
\end{align*}
where we used (\ref{D}), Lemma~\ref{D2}, (\ref{est-tailcontrol}) and $\xi,\beta\in(0,1)$. Thus,
\[\int_{B_\rho}\int_{B_\rho}|u_M(x)-u_M(y)|^2\,\mu(x,dy)dx\leqslant\gamma M^2\rho^{N-\alpha}\]
for some $\gamma=\gamma(\mathfrak{C}_2,N,\alpha)$. From this we know that hypothesis (b) of Lemma~\ref{cluster} is satisfied with $c_0=\gamma(\mathfrak{C}_2,N,\alpha)$. Set $\eta$ to be the constant from Lemma~\ref{cluster}, depending only on {\it data} and $\beta$. Recalling (\ref{k}) in the proof of Lemma~\ref{cluster}, we know that
\begin{equation}\label{est44}
\eta=\widetilde\eta\beta^{\frac3\alpha}
\end{equation}
for some $\widetilde\eta=\widetilde\eta({\it data})$.
\par
We apply Lemma~\ref{cluster} to $u_M$ with $\lambda=\delta=1/2$ and obtain that there exists $x_1\in B_\rho$ such that
\[|\{u>M/2\}\cap B_{\eta\rho}(x_1)|=|\{u_M>M/2\}\cap B_{\eta\rho}(x_1)|\geqslant\frac{1}{2}|B_{\eta\rho}(x_1)|.\]
Set $\sigma_{0},\sigma_1$ to be the constant from Proposition~\ref{Exp-Pos} with $\beta=1/2$ and $\beta=1$ respectively, which depend on {\it data}. Applying Proposition~\ref{Exp-Pos} with $\beta=1/2$, we have
\[u\geqslant\sigma_{0}M\quad\text{a.e.}\quad B_{2\eta\rho}(x_1),\]
provided 
\begin{equation}\label{est6-5}
5^{-\alpha}{\rm Tail}(u_-;x_1,\tfrac{35}8\eta\rho,5\eta\rho)<\sigma_0M.
\end{equation}
Next, we use Proposition~\ref{Exp-Pos} ($n-1$)-times with $\beta=1$ to obtain
\[u\geqslant{\sigma_{0}}{\sigma_1}^{n-1}M\quad\text{a.e.}\quad B_{2^n\eta\rho}(x_1),\]
provided 
\begin{equation}\label{est6-6}
5^{-\alpha}{\rm Tail}(u_-;x_1,\tfrac{35}8\times 2^{n-1}\eta\rho,5\times2^{n-1}\eta\rho)<\sigma_0\sigma_1^{n-1}M.
\end{equation}
Let $n$ be so large that
\[2^{n-1}\eta\leqslant 3\leqslant 2^n\eta.\]
Set $\sigma:=\min\{\sigma_{0},\sigma_1\}$. Since $x_1\in B_\rho,~2^n\eta\leqslant 2\times2^{n-1}\eta\leqslant6$ and $0<\rho<\frac{R}{32}$, we know $B_{2\rho}\subset B_{3\rho}(x_1)\subset B_{2^n\eta\rho}(x_1)\subset B_R$.
On the other hand, we have
\[u\geqslant\sigma^{n}M=2^{n\log_2\sigma}M\geqslant\left(\frac{6}{\eta}\right)^{\log_2\sigma}M
\geqslant(\frac{6}{\widetilde\eta})^{\log_2\sigma}\beta^{-\frac{3}{\alpha}\log_2\sigma}M\quad\text{a.e.}\quad B_{2\rho},\]
where we used (\ref{est44}).
Otherwise, due to $0<\rho<\frac{R}{32}$ and $x_1\in B_\rho$, we have
\begin{equation*}
B_{\frac{35}8\times2^{k-1}\eta\rho}(x_1)\subset B_{\frac78R}\quad\text{and}\quad B_{2^{k-1}\eta\rho}(x_1)\subset B_R,\quad\text{for}~k=1,2,\dots,n.
\end{equation*}
Since $u\geqslant 0$ in $B_R$, we also have
\begin{equation*}
5^{-\alpha}{\rm Tail}(u_-;x_1,\tfrac{35}8\times 2^{k-1}\eta\rho,5\times2^{k-1}\eta\rho)\leqslant 3^\alpha\left(\frac{\rho}{R}\right)^\alpha{\rm Tail}(u_-;\tfrac78R,R),\quad\text{for}~k=1,2,\dots,n.
\end{equation*}
Therefore, if $3^\alpha\xi\beta^q\leqslant\sigma^n$, then (\ref{est6-5}) and (\ref{est6-6}) hold.
Take $q=-\frac{3}{\alpha}\log_2\sigma\gg 1,\xi=\frac{1}{3^\alpha}\big(\frac{6}{\widetilde\eta}\big)^{\log_2\sigma}$ to conclude the desired estimate.
\end{proof}
Now we are ready to prove the weak Harnack inequality, which examines the local integral contribution to the positivity of a local weak super-solution of (\ref{equa}).
\begin{proof}[Proof of Theorem~\ref{weak-harn}]
Let
\[I:=\inf_{B_{2r}(x_0)}u+\left(\frac{r}{R}\right)^\alpha{\rm Tail}(u_-;\tfrac78R,R).\]
Proposition~\ref{Exp-Pos-im} yields that there exist $\xi=\xi({\it data})\in(0,1)$ and $q=q({\it data})>1$, such that either
\[\inf_{B_{2r}(x_0)}u\geqslant{\xi}\bigg(\frac{|\{u\geqslant\lambda\}\cap B_r(x_0)|}{|B_r(x_0)|}\bigg)^q\lambda\]
or
\[\left(\frac{r}{R}\right)^\alpha{\rm Tail}(u_-;\tfrac78R,R)\geqslant{\xi}\bigg(\frac{|\{u\geqslant\lambda\}\cap B_r(x_0)|}{|B_r(x_0)|}\bigg)^q\lambda\]
for any $\lambda>0$. Therefore, we know
\begin{equation}\label{est11}
I\geqslant {\xi}\bigg(\frac{|\{u\geqslant\lambda\}\cap B_r(x_0)|}{|B_r(x_0)|}\bigg)^q\lambda,\quad\forall\lambda>0.
\end{equation}
By (\ref{est11}), we obtain that
\[|\{u\geqslant\lambda\}\cap B_r(x_0)|\leqslant\xi^{-\frac{1}{q}}I^{\frac{1}{q}}\lambda^{-\frac{1}{q}}|B_r(x_0)|.\]
For $\varepsilon\in(0,1)$ to be chosen, we compute using the above estimate:
\begin{align*}
\int_{B_r(x_0)}u^\varepsilon\,dx=&\int_0^\infty |\{u\geqslant\lambda\}\cap B_r(x_0)|\,d\lambda^\varepsilon\\
\leqslant&~I^\varepsilon|B_r|+\int_I^\infty\varepsilon\lambda^{\varepsilon-1}|\{u\geqslant\lambda\}\cap B_r(x_0)|\,d\lambda\\
\leqslant&~I^\varepsilon|B_r|+\xi^{-\frac{1}{q}}I^{\frac{1}{q}}|B_r|\int_{I}^\infty\varepsilon\lambda^{\varepsilon-1-\frac{1}{q}}\,d\lambda.
\end{align*}
The improper integral converges if $\varepsilon<1/q$, and the second term is bounded by
$$\bar{c}I^\varepsilon|B_r|,\quad\text{where}~\bar{c}=\frac{q\varepsilon\xi^{-\frac{1}{q}}}{1-q\varepsilon}.$$
Hence,
\[\bigg(\fint_{B_r(x_0)}u^{\varepsilon}\,dx\bigg)^{\frac{1}{\varepsilon}}\leqslant (\bar{c}+1)^{\frac{1}{\varepsilon}}I.\]
The proof is completed.
\end{proof}

\section{Local boundedness}

\subsection{Local boundedness for local weak sub-solutions}
The main purpose this section is to prove Theorem~\ref{bounded2}. That is a local boundedness estimate for non-negative weak sub-solutions of (\ref{equa}).
In \cite{schu}, in order to draw an analogous conclusion, the author imposed a kind of point-wise upper bound condition on $K(x,y)$, which is also used in \cite{kass2}. In our work, the point-wise upper bound condition has been replaced by (\ref{D}) and (\ref{UJS}) (see Lemma~\ref{point-upper}).
\begin{proof}[Proof of Theorem~\ref{bounded2}]
Upon a translation, we may assume $x_0=0$ and define for $n\in\mathbb{N}_0$,
\begin{align*}
&r_n=\frac{r}{2}+\frac{r}{2^{n+1}},~~\widetilde{r}_n=\frac{r_n+r_{n+1}}{2},~~\widehat{r}_n=\frac{3r_n+r_{n+1}}{4},\\
&B_n=B_{r_n},~~\widetilde{B}_n=B_{\widetilde{r}_n},~~\widehat{B}_n=B_{\widehat{r}_n},\\
&k_n=k-\frac{k}{2^n},~~\widetilde{k}_n=\frac{k_n+k_{n+1}}{2},~~~k\in\mathbb{R}^+,\\
& w_n=(u-k_n)_+,~~\widetilde{ w}_n=(u-\widetilde{k}_n)_+.
\end{align*}
It is helpful to observe that
\[B_{n+1}\subset\widetilde{B}_n\subset\widehat{B}_n\subset B_n.\]
Take $\zeta_n:\mathbb{R}^N\rightarrow\mathbb{R}$ to be a function such that $\text{supp}\,\zeta_n(\cdot)\subset \widetilde{B}_n,~\|\zeta_n\|_\infty\leqslant 1,~\zeta_n(x)=1$ in $B_{n+1}$ and $|\nabla\zeta|\leqslant\gamma(N)2^{n+3}r^{-1}$. According to assumption (\ref{D}) and Lemma~\ref{D->B}, we have
\begin{equation}\label{est6-4}
\sup_{x\in B_n}\int_{B_n}(\zeta_n(y)-\zeta_n(x))^2\,\mu(x,dy)\leqslant\gamma(\mathfrak{C}_2,N,\alpha)\left(\frac{r}{2^{n+3}}\right)^{-\alpha}.
\end{equation}
By using Proposition~\ref{DG-class} we get
\begin{align}\label{est30}
\mathcal{E}_{\mathbb{R}^N}^\mu(\widetilde{w}_n\zeta_n,\widetilde{w}_n\zeta_n)
\leqslant&~2\bigg[\int_{B_n}\int_{B_n}\max\{\widetilde{ w}_n^2(x),\widetilde{ w}_n^2(y)\}(\zeta_n(x)-\zeta_n(y))^2\,\mu(x,dy)dx\nonumber\\
&~+\int_{B_n}\widetilde{ w}_n(x)\zeta_n^2(x)\,dx\bigg(\underset{x\in \text{supp}\,\zeta_n}{\text{sup}}\int_{\mathbb{R}^N\backslash B_n}\widetilde{ w}_n(y)\,\mu(x,dy)\bigg)\bigg]\nonumber\\
&~+2\int_{B_n}\int_{\mathbb{R}^N\backslash B_n}|\widetilde{w}_n(x)\zeta_n(x)|^2\,\mu(x,dy)dx.
\end{align}
By the assumption (\ref{est6-4}), we obtain the following estimate for the first term in the right hand-side of the inequality above,
\begin{align}\label{est31}
\int_{B_n}\int_{B_n}\widetilde{ w}_n^2(x)(\zeta_n(x)-\zeta_n(y))^2\,\mu(x,dy)dx\leqslant&~\gamma(\mathfrak{C}_2,N,\alpha) r^{-\alpha}2^{\alpha(n+3)}\int_{B_n}\widetilde{ w}_n^2(x)\,dx\nonumber\\
\leqslant&~\gamma(\mathfrak{C}_2,N,\alpha)2^{\alpha n}r_n^{N-\alpha}\fint_{B_n}\widetilde{ w}_n^2(x)\,dx,
\end{align}
where we used $r_n\leqslant r$ in the last line. For the second term on the right of (\ref{est30}),
We note that for $x\in\widehat{B}_n,~y\in\mathbb{R}^N\backslash B_n,$
\[\frac{|y|}{|x-y|}\leqslant\frac{|x-y|+|x|}{|x-y|}\leqslant 1+\frac{\hat{r}_n}{r_n-\hat{r}_n}\quad\Longrightarrow \quad \frac{1}{|x-y|^{N+\alpha}}\leqslant\left(\frac{{r}_n}{r_n-\hat{r}_n}\right)^{N+\alpha}\frac{1}{|y|^{N+\alpha}}.\]
By using Lemma~\ref{ujs-control} and Lemma~\ref{point-upper}, we have
\begin{align}\label{est32}
&\int_{B_n}\widetilde{ w}_n(x)\zeta_n^2(x)\,dx\bigg(\underset{x\in \text{supp}\,\zeta_n}{\text{sup}}\int_{\mathbb{R}^N\backslash B_n}\widetilde{ w}_n(y)\,\mu(x,dy)\bigg)\nonumber\\
&~\qquad\leqslant\gamma \int_{B_n}\widetilde{w}_n(x)\zeta_n^2(x)\,dx\left(\frac{r}{2^{n+3}}\right)^{-N}\int_{\widehat{B}_n}\int_{\mathbb{R}^N\backslash B_n} w_0(y)\,\mu(x,dy)dx\nonumber\\
&~\qquad\leqslant\gamma \int_{B_n}\widetilde{w}_n(x)\zeta_n^2(x)\,dx\left(\frac{r}{2^{n+3}}\right)^{-N}\int_{\widehat{B}_n}\int_{\mathbb{R}^N\backslash B_n}\frac{w_0(y)}{|x-y|^{N+\alpha}}\,dydx\nonumber\\
&~\qquad\leqslant\gamma\left(\frac{r}{2^{n+3}}\right)^{-N}\int_{B_n}\widetilde{w}_n(x)\zeta_n^2(x)\,dx\Big(\frac{{r}_n}{r_n-\hat{r}_n}\Big)^{N+\alpha}\hat{r}_n^N\int_{\mathbb{R}^N\backslash B_n}\frac{w_0(y)}{|y|^{N+\alpha}}\,dy\nonumber\\
&~\qquad\leqslant\gamma 2^{nN}r^{-2N-\alpha}r_n^{2N+\alpha}\int_{B_n}\widetilde{w}_n(x)\zeta_n^2(x)\,dx\int_{\mathbb{R}^N\backslash B_n}\frac{(u-k)_+}{|y|^{N+\alpha}}\,dy\nonumber\\
&~\qquad\leqslant\gamma 2^{nN}r^{-2N-\alpha}r_n^{3N+\alpha}\fint_{B_n}\frac{w_n^2(x)}{\widetilde{k}_n-k_n}\,dx\int_{\mathbb{R}^N\backslash B_{r/2}}\frac{u_+(y)}{|y|^{N+\alpha}}\,dy\nonumber\\
&~\qquad\leqslant\gamma 2^{nN}r^\alpha r_n^{N-\alpha} k^{-1}\fint_{B_n}w_n^2(x)\,dx\int_{\mathbb{R}^N\backslash B_{r/2}}\frac{u_+(y)}{|y|^{N+\alpha}}\,dy,
\end{align}
where $\gamma$ depend on $\mathfrak{C}_2,N,\alpha$ and $\hat{c}$, where we have used the definitions of $B_n,$ $\widetilde{B}_n,$ $\widehat{B}_n,$ $k_n,$ $\widetilde{k}_n,$ $w_n,$ $\widetilde w_n$ and the facts $\widetilde{ w}_n\leqslant{ w_n^2}/(\widetilde{k}_n-k_n)$.
For the last term on the right of (\ref{est30}), we have
\begin{align*}
\int_{B_n}\int_{\mathbb{R}^N\backslash B_n}|\widetilde{w}_n(x)\zeta_n(x)|^2\,\mu(x,dy)dx\leqslant&~\underset{x\in\widetilde{B}_n}{\text{ sup}}\int_{\mathbb{R}^N\backslash B_n}\mu(x,dy)\int_{B_n}\widetilde{w}_n^2(x)\,dx\\
\leqslant&~\gamma(\mathfrak{C}_2,\alpha)2^{n\alpha}r^{-\alpha}\int_{B_n}\widetilde{w}_n^2(x)\,dx\\
\leqslant&~\gamma(\mathfrak{C}_2,\alpha)2^{n\alpha}r_n^{N-\alpha}\fint_{B_n}\widetilde{w}_n^2(x)\,dx.
\end{align*}
The left-hand side of (\ref{est30}) can be estimated by using Lemma~\ref{poin->FK} with $v=\widetilde{w}_n\zeta_n$ and $\Omega_0=\widetilde{B}_n\cap\{u\geqslant\widetilde{k}_n\}$ as follow,
\begin{align}\label{est45}
c_0\mathcal{E}_{\mathbb{R}^N}^\mu(\widetilde{w}_n\zeta_n,\widetilde{w}_n\zeta_n)\geqslant&~|\widetilde{B}_n\cap\{u\geqslant\widetilde{k}_n\}|^{-\frac{\alpha}{N}}\int_{\widetilde{B}_n\cap\{u\geqslant\widetilde{k}_n\}}|\widetilde{w}_n\zeta_n|^2\,dx\nonumber\\
\geqslant&~|{B}_n\cap\{u\geqslant\widetilde{k}_n\}|^{-\frac{\alpha}{N}}\int_{B_{n+1}}w_{n+1}^2\,dx,
\end{align}
where we use the definitions of $\widetilde{w}_n,\zeta_n$ and the fact $B_{n+1}\subset\widetilde{B}_n,w_{n+1}\leqslant\widetilde{w}_n$. Since
\begin{align*}
|B_n\cap\{u\geqslant\widetilde{k}_n\}|\left(\frac{k}{2^{n+2}}\right)^2=&\int_{B_n\cap\{u\geqslant\widetilde{k}_n\}}(\widetilde{k}_n-k_n)^2\,dx\nonumber\\
\leqslant&\int_{B_n\cap\{u\geqslant\widetilde{k}_n\}}(u-k_n)^2\,dx\\
\leqslant&\int_{B_n}(u-k_n)^2\,dx,
\end{align*}
we have
\begin{equation}\label{est46}
|B_n\cap\{u\geqslant\widetilde{k}_n\}|^{-\frac{\alpha}{N}}\geqslant\bigg(\frac{2^{n+2}}{k}\bigg)^{-\frac{2\alpha}{N}}\bigg(\int_{B_n}(u-k_n)^2\,dx\bigg)^{-\frac{\alpha}{N}}.
\end{equation}
Inserting (\ref{est46}) into (\ref{est45}) implies
\begin{align}\label{est47}
c_0\mathcal{E}_{\mathbb{R}^N}^\mu(\widetilde{w}_n\zeta_n,\widetilde{w}_n\zeta_n)\geqslant&~\bigg(\frac{2^{n+2}}{k}\bigg)^{-\frac{2\alpha}{N}}\bigg(\int_{B_n}(u-k_n)^2\,dx\bigg)^{-\frac{\alpha}{N}}\int_{B_{n+1}}w_{n+1}^2\,dx\nonumber\\
\geqslant&~\gamma(N,\alpha)k^{\frac{2\alpha}{N}}2^{-\frac{2\alpha}{N}n}r_n^{N-\alpha}\bigg(\fint_{B_n}(u-k_n)^2\,dx\bigg)^{-\frac{\alpha}{N}}\fint_{B_{n+1}}w_{n+1}^2\,dx
\end{align}
where we used
\[\frac{r_{n+1}}{r_n}=\frac{2^{n+1}+1}{2^{n+1}+2}>\frac{1}{2}.\]

Setting $$Y_n:=\left(\fint_{B_n} w_n^2(x)\,dx\right)^{\frac{1}{2}}$$ and combining (\ref{est47}) (\ref{est30}), (\ref{est31}), (\ref{est32}), we obtain
\[Y_{n+1}^2\left(\frac{k}{2^n}\right)^{\frac{2\alpha}{N}}\leqslant\gamma Y_n^{2(1+\frac{\alpha}{N})}\bigg[2^{nN} k^{-1}\int_{\mathbb{R}^N\backslash B_{r/2}}\frac{u_+(y)}{|y|^{N+\alpha}}\,dy+2^{\alpha n}+1\bigg]\]
for some $\gamma=\gamma({\it data},\hat{c})$. Now, by taking
\begin{equation}\label{kk}
k\geqslant\delta\int_{\mathbb{R}^N\backslash B_{r/2}}\frac{u_+(y)}{|y|^{N+\alpha}}\,dy,~~\delta\in(0,1],
\end{equation}
we get
\[\frac{Y_{n+1}}{k}\leqslant\delta^{-\frac{1}{2}}\gamma^{\frac{1}{2}}2^{(\frac{\alpha+N}{2}+\frac{\alpha}{N})n}\left(\frac{Y_n}{k}\right)^{1+\frac{\alpha}{N}}.\]
By the fast geometric convergence, cf.~\cite{dibe-gia-ves}, if we require that
\begin{equation*}
\frac{Y_0}{k}\leqslant\delta^{\frac{N}{2\alpha}}H^{-1},
\end{equation*}
with
\[H:=\gamma^{\frac{N}{2\alpha}}2^{\frac{N^2(\alpha+N)}{2\alpha^2}+\frac{N}{\alpha}},\]
then $\frac{Y_n}{k}\rightarrow 0$ as $n\to\infty$.
To satisfy the above condition, we choose
\[k=\delta\int_{\mathbb{R}^N\backslash B_{r/2}}\frac{u_+(y)}{|y|^{N+\alpha}}\,dy+\delta^{-\frac{N}{2\alpha}}HY_0,\]
which also satisfies (\ref{kk}).
\par
Consequently, we deduce
\begin{equation*}
\sup_{B_{\frac{r}{2}}}u\leqslant k\leqslant\delta\int_{\mathbb{R}^N\backslash B_{r/2}}\frac{u_+(y)}{|y|^{N+\alpha}}\,dy+\delta^{-\frac{N}{2\alpha}}H\left(\fint_{B_r}u_+^2\,dx\right)^{\frac{1}{2}}.
\end{equation*}
This completes the proof.
\end{proof}
\subsection{Local boundedness for local weak solutions}\label{loc-boun-super}
In this subsection, we aim to introduce another version of the locally bounded theorem for local weak solutions, see Theorem~\ref{bounded}, which is an important prior estimate for the establishment of Harnack inequality. To prove that, we first introduce a useful lemma. 
\begin{lemma}\label{pre-control}
Let $\mu$ satisfy (\ref{D}). Let $B_R(x_0)\subset E$. Let $u$ be a local weak super-solution of (\ref{equa}) in $E$, such that $u\geqslant 0$ in $B_R(x_0)$. Then, for $0<\rho<\frac{7}{8}R,~\ell\in(\frac{1}{4},1)$, we have
\begin{equation*}
\int_{B_{l\rho}(x_0)}\int_{\mathbb{R}^N\backslash B_\rho(x_0)}u_+(y)\,\mu(x,dy)dx\leqslant \gamma \rho^{N-\alpha}\bigg(\sup_{B_\rho(x_0)}u+\bigg(\frac{\rho}{R}\bigg)^\alpha{\rm Tail}(u_-;x_0,\tfrac78R,R)\bigg),
\end{equation*}
where the constant $\gamma$ depends only on $\mathfrak{C}_2,\alpha$ and $N$.
\end{lemma}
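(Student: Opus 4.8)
The plan is to feed the super-solution into the energy estimate of Proposition~\ref{DG-class} at a carefully chosen truncation level and to read off the quantity we want from its \emph{left}-hand side: the product $\int_{B_\rho}w_-(x)\zeta^2(x)\big(\int_{\mathbb{R}^N}w_+(y)\,\mu(x,dy)\big)dx$ sitting there is, after elementary manipulations, essentially $\int_{B_{\ell\rho}}\int_{\mathbb{R}^N\backslash B_\rho}u_+\,\mu$. Concretely, assume $x_0=0$, write $S:=\sup_{B_\rho}u$ (the statement is trivial if $S=\infty$), fix $h>0$, and put $k:=S+h$, $w:=u-k$; since $u\ge0$ on $B_\rho\subset B_R$ we have $w_+\equiv0$ and $h\le w_-=k-u\le k$ throughout $B_\rho$. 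Picking $\zeta$ with $0\le\zeta\le1$, $\zeta\equiv1$ on $B_{\ell\rho}$, $\operatorname{supp}\zeta\subset B_{(1+\ell)\rho/2}$ and $|\nabla\zeta|\le\gamma(N)\big((1-\ell)\rho\big)^{-1}$, and applying Proposition~\ref{DG-class} on $B_\rho\subset E$ with the lower sign, discarding the nonnegative first term on the left and using $w_+\equiv0$, $w_-\ge h$ on $B_\rho$ and $\zeta\equiv1$ on $B_{\ell\rho}$ gives
\[
h\int_{B_{\ell\rho}}\int_{\mathbb{R}^N\backslash B_\rho}(u(y)-k)_+\,\mu(x,dy)\,dx\ \le\ \big(\text{right-hand side of Proposition~\ref{DG-class}}\big).
\]

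Next I would bound the right-hand side of Proposition~\ref{DG-class}. Since $0\le w_-\le k$ on $B_\rho$ and $(\zeta(x)-\zeta(y))^2\le|\nabla\zeta|_\infty^2|x-y|^2$, Lemma~\ref{D->B} controls the cut-off term by $\gamma k^2(1-\ell)^{-2}\rho^{N-\alpha}$. For the exterior term, split $w_-(y)\le k\,\mathbf{1}_{\mathbb{R}^N\backslash B_\rho}(y)+u_-(y)\,\mathbf{1}_{\mathbb{R}^N\backslash B_R}(y)$ (again using $u\ge0$ on $B_R$); then Lemma~\ref{D2}, together with $\operatorname{supp}\zeta\subset B_\rho\subset B_{7R/8}$ (which is where $\rho<\tfrac78R$ enters), yields $\sup_{z\in\operatorname{supp}\zeta}\int_{\mathbb{R}^N\backslash B_\rho}w_-(y)\,\mu(z,dy)\le\gamma k(1-\ell)^{-\alpha}\rho^{-\alpha}+R^{-\alpha}{\rm Tail}(u_-;0,\tfrac78R,R)$, while $\int_{B_\rho}w_-\zeta^2\,dx\le k|B_\rho|$, so the exterior term is at most $\gamma k^2(1-\ell)^{-\alpha}\rho^{N-\alpha}+\gamma k\rho^{N-\alpha}(\rho/R)^\alpha{\rm Tail}(u_-;0,\tfrac78R,R)$. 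On the left I would use $(u(y)-k)_+\ge u_+(y)-k$ (valid since $k\ge0$) and $\int_{B_{\ell\rho}}\int_{\mathbb{R}^N\backslash B_\rho}\mu(x,dy)\,dx\le\gamma(1-\ell)^{-\alpha}\rho^{N-\alpha}$ (Lemma~\ref{D2}) to replace $(u(y)-k)_+$ by $u_+(y)$ at the cost of an extra term $\gamma hk(1-\ell)^{-\alpha}\rho^{N-\alpha}$. Collecting everything, using $\alpha<2$ and $h\le k$, dividing by $h$ and recalling $k=S+h$, the choice $h=S$ when $S>0$ (and $h\searrow0$ when $S=0$) turns $k^2/h$ and $k/h$ into bounded multiples of $S$ and of $1$, which yields the assertion.

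The only genuinely delicate choice is the truncation level $k=S+h$: taking $k=\sup_{B_\rho}u$ outright would let the coefficient $w_-=k-u$ in the lower bound degenerate exactly where $u$ is largest, so a strictly positive margin $h$ has to be kept and then optimised, and it is precisely this optimisation that converts the \emph{quadratic} dependence on $k$ coming out of Proposition~\ref{DG-class} into the \emph{linear} dependence on $\sup_{B_\rho}u$ claimed in the statement. (The argument also produces a factor $(1-\ell)^{-2}$ from $|\nabla\zeta|$, which is absorbed into $\gamma$ as $\ell$ is a fixed parameter.) Everything else is a routine combination of Lemmas~\ref{D2} and~\ref{D->B} with the algebra of Proposition~\ref{DG-class}.
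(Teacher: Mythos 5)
Your proposal is correct and follows essentially the same route as the paper: both test the energy estimate of Proposition~\ref{DG-class} with a cutoff supported in $B_{(1+\ell)\rho/2}$ at a truncation level strictly above $\sup_{B_\rho}u$, keep the mixed term $\int w_-\zeta^2\big(\int w_+\,\mu\big)$ on the left as the quantity of interest, and control the right-hand side via Lemmas~\ref{D2} and~\ref{D->B}. Your optimisation $h=S$ lands exactly on the paper's choice $w=u-2k$ with $k=\sup_{B_\rho}u$ (and in fact handles the degenerate case $S=0$ more cleanly), so the two arguments coincide in substance.
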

\begin{proof}
For simplicity, we assume $x_0=0.$ Set $k=\underset{B_\rho}{\sup}~u$ and $w=u-2k$. Take $\zeta:\mathbb{R}^N\rightarrow\mathbb{R}$ to be a function such that $\text{supp}\,\zeta(\cdot)\subset B_{\frac{1+\ell}{2}\rho},~\|\zeta\|_\infty\leqslant 1,~\zeta(x)=1~\text{in}~B_{\ell\rho}$ and $|\nabla\zeta|\leqslant\frac{\gamma(N)}{(1-\ell)\rho}$. According to assumption (\ref{D}) and Lemma~\ref{D->B}, we have
\begin{align}\label{est6-3}
&\sup_{x\in B_R}\int_{B_R}(\zeta(y)-\zeta(x))^2\,\mu(x,dy)\leqslant 2^\alpha \mathfrak{C}_2\left[(1-\ell)\rho\right]^{-\alpha}.
\end{align}
By Proposition~\ref{DG-class}, we have
\begin{align}\label{est38}
&\int_{B_R}w_-(x)\zeta^2(x)\,dx\bigg(\int_{\mathbb{R}^N}w_+(y)\,\mu(x,dy)\bigg)\nonumber\\
&~\qquad\leqslant\gamma\int_{B_R}\int_{B_R}\max\{w_-^2(x),w_-^2(y)\}(\zeta(x)-\zeta(y))^2\,\mu(x,dy)dx\nonumber\\
&~\quad\qquad+\gamma\int_{B_R}w_-(x)\zeta^2(x)\,dx\bigg(\underset{x\in\text{supp}\,\zeta}{\text{sup}}\int_{\mathbb{R}^N\backslash B_R}u_-(y)\,\mu(x,dy)\bigg)\nonumber\\
&~\qquad=:I_1+I_2.
\end{align}
Noticing $w_-\leqslant 2k$, together with (\ref{est6-3}), we can deduce
\begin{equation}\label{est39}
I_1\leqslant 2^{\alpha+2}\gamma \mathfrak{C}_2k^2\rho^N[(1-\ell)\rho]^{-\alpha},
\end{equation}
and
\begin{equation}\label{est40}
I_2\leqslant 2\gamma k\rho^N R^{-\alpha}{\rm Tail}(u_-;\tfrac78R,R).
\end{equation}
\par
Next, we treat the left-hand side of (\ref{est38}). We know that $w_+=0$ in $B_\rho$ from the definition of $k$, thus,
\begin{equation*}
\int_{\mathbb{R}^N}w_+(y)\,\mu(x,dy)=\int_{\mathbb{R}^N\backslash B_\rho}w_+(y)\,\mu(x,dy).
\end{equation*}
Recalling the definition of $w$, we have
\begin{align}\label{est41}
\int_{B_R}w_-(x)\zeta^2(x)\,dx\bigg(\int_{\mathbb{R}^N}w_+(y)\,\mu(x,dy)\bigg)\geqslant&\int_{B_{\ell\rho}}w_-(x)\zeta^2(x)\,dx\bigg(\int_{\mathbb{R}^N\backslash B_\rho}w_+(y)\,\mu(x,dy)\bigg)\nonumber\\
\geqslant&~k\int_{B_{\ell\rho}}\int_{\mathbb{R}^N\backslash B_\rho}(u_+(y)-2k)\,\mu(x,dy)dx.
\end{align}
Combining (\ref{est38}), (\ref{est39}), (\ref{est40}) and (\ref{est41}), we get
\begin{align*}
&\int_{B_{\ell\rho}}\int_{\mathbb{R}^N\backslash B_\rho}u_+(y)\,\mu(x,dy)dx\\
&~\quad\leqslant2k\int_{B_{\ell\rho}}\int_{\mathbb{R}^N\backslash B_\rho}\mu(x,dy)dx+\gamma\Big[k\rho^N[(1-\ell)\rho]^{-\alpha}+\rho^N R^{-\alpha}{\rm Tail}(u_-;7R/8,R)\Big]\\
&~\quad\leqslant\gamma k [(1-\ell)\rho]^{-\alpha}|B_{\ell\rho}|+\gamma\Big[k\rho^N[(1-\ell)\rho]^{-\alpha}+\rho^N R^{-\alpha}{\rm Tail}(u_-;\tfrac78R,R)\Big]\\
&~\quad\leqslant\gamma\rho^{N-\alpha}\left[k+\left(\frac{\rho}{R}\right)^{\alpha}{\rm Tail}(u_-;\tfrac78R,R)\right]
\end{align*}
for some $\gamma=\gamma(\mathfrak{C}_2,N,\alpha)$, where we used the assumption (\ref{D}), Lemma~\ref{D2} and $\ell\in(\frac{1}{2},1)$. This is the desired result, recalling the definition of $k$.
\end{proof}

\begin{theorem}\label{bounded}
Let $\mu$ be represented by (\ref{mu}) and satisfy (\ref{S}), (\ref{A-poin}), (\ref{D}) and (\ref{UJS}). Let $u$ be a local weak solution of (\ref{equa}) in $E$, such that $u\geqslant 0$ in $B_R(x_0)$. Then, for any $0<r<R$, there holds
\begin{equation*}\label{bounded3}
\sup_{B_{r/2}(x_0)}u\leqslant\delta\bigg[\sup_{B_r(x_0)}u+\left(\frac{r}{R}\right)^\alpha{\rm Tail}(u_-;x_0,\tfrac78R,R)\bigg]+\gamma\delta^{-\frac{N}{2\alpha}}\bigg(\fint_{B_r(x_0)}u_+^2\,dx\bigg)^{\frac{1}{2}},
\end{equation*}
for any $\delta\in(0,1]$, provided the ball $B_R(x_0)$ is included in $E$, where the constant $\gamma>1$ depends only on {\it data} and $\hat{c}$.
\end{theorem}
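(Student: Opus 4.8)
The plan is to repeat, almost verbatim, the De Giorgi iteration carried out in the proof of Theorem~\ref{bounded2}, modifying only the treatment of the long-range (tail) contribution. Recall that the conclusion of Theorem~\ref{bounded} differs from that of Theorem~\ref{bounded2} solely in that the pointwise tail $r^\alpha\int_{\mathbb{R}^N\setminus B_{r/2}(x_0)}u_+(y)|x_0-y|^{-N-\alpha}\,dy$ is replaced by $\sup_{B_r(x_0)}u+(r/R)^\alpha{\rm Tail}(u_-;x_0,\tfrac78R,R)$, and the latter quantity is exactly the output of Lemma~\ref{pre-control} once one controls a nonlocal integral of $u_+$ against $\mu$. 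Since Lemma~\ref{pre-control} requires $u$ to be a super-solution, this is why the hypothesis here is that $u$ is a solution rather than merely a sub-solution. Accordingly, I would fix $x_0=0$ and reintroduce all the objects from the proof of Theorem~\ref{bounded2}: the radii $r_n,\widetilde{r}_n,\widehat{r}_n$ and the balls $B_n\supset\widehat{B}_n\supset\widetilde{B}_n\supset B_{n+1}$, the levels $k_n\uparrow k$ (with $k>0$ to be chosen and $k_0=0$, so that $w_0=(u-k_0)_+=u_+$), $\widetilde{k}_n$, the functions $w_n=(u-k_n)_+$ and $\widetilde{w}_n=(u-\widetilde{k}_n)_+$, and the cutoffs $\zeta_n$. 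Applying Proposition~\ref{DG-class} to the sub-solution $u$ gives the energy inequality (\ref{est30}); its term involving $(\zeta_n(x)-\zeta_n(y))^2$ and its term involving $\int_{\mathbb{R}^N\setminus B_n}\mu(x,dy)$ are estimated exactly as in (\ref{est31}) and in the display preceding (\ref{est45}), via (\ref{est6-4}), Lemma~\ref{D2} and (\ref{D}); and the left-hand side of (\ref{est30}) is bounded below by the Faber--Krahn inequality (Lemma~\ref{poin->FK}) with $v=\widetilde{w}_n\zeta_n$, $\Omega_0=\widetilde{B}_n\cap\{u\geqslant\widetilde{k}_n\}$, exactly as in (\ref{est45})--(\ref{est47}).

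The only step that genuinely changes is the estimate of the middle term of (\ref{est30}), i.e. the analogue of (\ref{est32}). Here I would first invoke Lemma~\ref{ujs-control} with the pair of radii $\widetilde{r}_n<r_n$ (so that the gain ball is $\widehat{B}_n$ and the prefactor is $\rho^{-N}$ with $\rho=r_n-\widetilde{r}_n\asymp 2^{-n}r$) to turn the supremum into an average, using $\widetilde{w}_n\leqslant u_+$ on $\mathbb{R}^N\setminus B_n$:
\[
\sup_{x\in\mathrm{supp}\,\zeta_n}\int_{\mathbb{R}^N\setminus B_n}\widetilde{w}_n(y)\,\mu(x,dy)\ \leqslant\ \gamma\Big(\tfrac{2^n}{r}\Big)^{N}\int_{\widehat{B}_n}\int_{\mathbb{R}^N\setminus B_n}u_+(y)\,\mu(x,dy)\,dx.
\]
Then, in place of the pointwise bound Lemma~\ref{point-upper}, I would apply Lemma~\ref{pre-control} (legitimate since $u$ is a super-solution with $u\geqslant 0$ in $B_R$) with $\rho=r_n$ and $\ell=\widehat{r}_n/r_n\in(\tfrac34,1)$ to obtain, writing $T:=\sup_{B_r}u+(r/R)^\alpha{\rm Tail}(u_-;\tfrac78R,R)$,
\[
\int_{\widehat{B}_n}\int_{\mathbb{R}^N\setminus B_n}u_+(y)\,\mu(x,dy)\,dx\ \leqslant\ \gamma\,r_n^{N-\alpha}\Big(\sup_{B_{r_n}}u+\big(\tfrac{r_n}{R}\big)^\alpha{\rm Tail}(u_-;\tfrac78R,R)\Big)\ \leqslant\ \gamma\,r_n^{N-\alpha}\,T.
\]
Multiplying by $\int_{B_n}\widetilde{w}_n\zeta_n^2\,dx\leqslant\frac{2^{n+2}}{k}\int_{B_n}w_n^2\,dx$ and passing to averages (using $r_n\leqslant r$), the middle term of (\ref{est30}) is bounded by $\gamma\,b_0^{\,n}k^{-1}T\,r_n^{N-\alpha}\fint_{B_n}w_n^2\,dx$ for some $b_0=b_0({\it data},\hat c)>1$ --- which is precisely the role played, in the proof of Theorem~\ref{bounded2}, by the term carrying $r^\alpha\int_{\mathbb{R}^N\setminus B_{r/2}}u_+(y)|y|^{-N-\alpha}\,dy$, with that pointwise tail replaced by $T$.

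From here the argument is word-for-word that of Theorem~\ref{bounded2}: setting $Y_n:=(\fint_{B_n}w_n^2\,dx)^{1/2}$ one arrives at a recursion $Y_{n+1}/k\leqslant\gamma\,\delta^{-1/2}b^{\,n}(Y_n/k)^{1+\alpha/N}$ with $b=b({\it data},\hat c)>1$, valid whenever $k\geqslant\delta T$; the fast geometric convergence lemma (cf.~\cite{dibe-gia-ves}) then forces $Y_n/k\to 0$, i.e. $\sup_{B_{r/2}}u\leqslant k$, as soon as $Y_0/k\leqslant\delta^{N/(2\alpha)}H^{-1}$ for a suitable $H=H({\it data},\hat c)$. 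Choosing
\[
k:=\delta\Big(\sup_{B_r}u+\big(\tfrac rR\big)^\alpha{\rm Tail}(u_-;\tfrac78R,R)\Big)+\delta^{-N/(2\alpha)}H\Big(\fint_{B_r}u_+^2\,dx\Big)^{1/2}
\]
meets both constraints (note $Y_0=(\fint_{B_r}u_+^2\,dx)^{1/2}$ since $w_0=u_+$), and yields the asserted estimate with $\gamma=H$.

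The main obstacle I anticipate is purely bookkeeping: verifying that the extra dyadic factor $(2^n/r)^N$ produced by Lemma~\ref{ujs-control}, together with any $n$-dependence hidden in the application of Lemma~\ref{pre-control} (through $\ell=\ell_n$ approaching $1$, which at worst contributes a $(1-\ell_n)^{-\alpha}\asymp 2^{n\alpha}$), collapses into a single geometric factor $b^n$ with $b$ depending only on the {\it data} and $\hat c$, so that the fast geometric convergence still applies. A secondary, minor point is that Lemma~\ref{pre-control} requires $\rho<\tfrac78R$, which for the outermost radius $r_0=r$ reads $r<\tfrac78R$; for $r$ near $R$ one simply shrinks the starting radius (for instance replacing $r_n$ by $\tfrac r2+\tfrac r{2^{n+2}}$, which still satisfies $\bigcap_n B_n\supset B_{r/2}$ and only changes constants), or covers $B_{r/2}$ by balls for which the estimate is already available.
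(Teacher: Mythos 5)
Your proposal is correct and follows essentially the same route as the paper: the paper's proof of Theorem~\ref{bounded} consists precisely of rerunning the iteration of Theorem~\ref{bounded2} and replacing the estimate (\ref{est32}) by the combination of Lemma~\ref{ujs-control} (with gain ball $\widehat{B}_n$ and prefactor $(r/2^{n+3})^{-N}$, exactly as you set it up) and Lemma~\ref{pre-control} applied with $\rho=r_n$, which substitutes $\sup_{B_r}u+(r/R)^\alpha{\rm Tail}(u_-;\tfrac78R,R)$ for the pointwise tail. The bookkeeping issues you flag (the geometric factors $b^n$ and the constraint $\rho<\tfrac78R$ for the outermost radius) are genuine but harmless, and your handling of them is if anything slightly more careful than the paper's.
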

\begin{proof}
We closely follow the arguments in the proof of Theorem~\ref{bounded2}. By using Lemma~\ref{ujs-control} and Lemma~\ref{pre-control}, we revise the estimation (\ref{est32}) as follows
\begin{align*}
&\int_{B_n}\widetilde{ w}_n(x)\zeta_n^2(x)\,dx\bigg(\underset{x\in \text{supp}\,\zeta_n}{\text{sup}}\int_{\mathbb{R}^N\backslash B_n}\widetilde{ w}_n(y)\,\mu(x,dy)\bigg)\nonumber\\
&~\qquad\leqslant\int_{B_n}\widetilde{ w}_n(x)\zeta_n^2(x)\,dx\bigg(\underset{x\in \widetilde{B}_n}{\text{sup}}\int_{\mathbb{R}^N\backslash B_n} w_0(y)\,\mu(x,dy)\bigg)\nonumber\\
&~\qquad\leqslant\gamma \int_{B_n}\widetilde{w}_n(x)\zeta_n^2(x)\,dx\left(\frac{r}{2^{n+3}}\right)^{-N}\int_{\widehat{B}_n}\int_{\mathbb{R}^N\backslash B_n} w_0(y)\,\mu(x,dy)dx\nonumber\\
&~\qquad\leqslant\gamma r_n^{N-\alpha}\int_{B_n}\widetilde{ w}_n(x)\zeta_n^2(x)\,dx\left(\frac{r}{2^{n+3}}\right)^{-N}\bigg[\sup_{B_n}u+\left(\frac{r_n}{R}\right)^\alpha{\rm Tail}(u_-;\tfrac78R,R)\bigg]\nonumber\\
&~\qquad\leqslant\gamma r_n^{2N-\alpha}\left(\frac{r}{2^{n+3}}\right)^{-N}\fint_{B_n}\frac{ w_n^2(x)}{\widetilde{k}_n-k_n}\,dx\bigg[\sup_{B_r}u+\left(\frac{r}{R}\right)^\alpha{\rm Tail}(u_-;\tfrac78R,R)\bigg]\nonumber\\
&~\qquad\leqslant\gamma 2^{(n+2)N} r_n^{N-\alpha}k^{-1}\fint_{B_n} w_n^2(x)\,dx\bigg[\sup_{B_r}u+\left(\frac{r}{R}\right)^\alpha{\rm Tail}(u_-;\tfrac78R,R)\bigg],
\end{align*}
where $\gamma$ depend on $\mathfrak{C}_2,N,\alpha$ and $\hat{c}$. The difference from (\ref{est32}) is that
$$\int_{\mathbb{R}^N\backslash B_{r/2}}\frac{u_+(y)}{|y|^{N+\alpha}}\,dy$$ is replaced by $$\sup_{B_r}u+\left(\frac{r}{R}\right)^\alpha{\rm Tail}(u_-;\tfrac78R,R).$$
The rest of the proof is the same as the proof of Theorem~\ref{bounded2}.
\end{proof}

\section{Full Harnack inequality}
Based on the locally boundedness estimates in Section~6 and the weak Harnack inequality from Section~\ref{weak-Harn}, a full Harnack inequality can be established through a standard iterative process.

\begin{proof}[Proof of Theorem~\ref{full-harn}]
    The estimate in Theorem~\ref{bounded} yields
    \begin{equation}\label{est36}
        \sup_{B_{\frac{\rho}{2}}(x_0)}u\leqslant\gamma\frac{1}{\delta^{\frac{N}{2\alpha}}}\bigg(\fint_{B_\rho(x_0)}u_+^2\,dx\bigg)^{\frac{1}{2}}+\delta\sup_{B_\rho(x_0)}u+\delta2^{-\alpha} {\rm Tail}(u_-;x_0,\tfrac74\rho,2\rho),
    \end{equation}
    for any $B_R(x_0)\subset E$ and $0<\rho<\frac12R$. To proceed, we first need to perform a covering argument. Let $\frac{1}{2}\leqslant\sigma^\prime<\sigma\leqslant 1$. Note that
    \begin{equation*}
    B_{\sigma^\prime r}\subseteq\underset{z\in B_{\sigma^\prime r}}{\bigcup}B_{\frac{\sigma-\sigma^\prime}{2}r}(z)~~\text{and}~~B_{(\sigma-\sigma^\prime)r}(z)\subset B_{\sigma r}~~\text{for any}~~z\in B_{\sigma^\prime r}.
    \end{equation*}
    Otherwise, since $r<\frac{R}{32}$, we have
    \[B_{\frac74r(\sigma-\sigma^\prime)}(z)\subset B_{\frac78R}\quad\text{and}\quad B_{2r(\sigma-\sigma^\prime)}(z)\subset B_R,~~\text{for any}~~z\in B_{\sigma^\prime r}.\]
    Due to $u\geqslant 0$ in $B_R(x_0)$, we also have
    \[2^{-\alpha}{\rm Tail}(u_-;z,\tfrac74(\sigma-\sigma^\prime)r,2(\sigma-\sigma^\prime)r)\leqslant\left(\frac{r}{R}\right)^\alpha{\rm Tail}(u_-;\tfrac78R,R).\]
    Therefore, by using (\ref{est36}) with $\rho=(\sigma-\sigma^\prime)r$ and $x_0=z$, we get
    \begin{align*}
        \sup_{B_{\sigma^\prime r}}u\leqslant&\sup_{z\in B_{\sigma^\prime r}}\Bigg(\sup_{B_{\frac{\sigma-\sigma^\prime}{2}r}(z)}u\Bigg)\nonumber\\
        \leqslant&~\gamma\delta^{-\frac{N}{2\alpha}}\bigg(\fint_{B_{(\sigma-\sigma^\prime)r}(z)}u_+^2\,dx\bigg)^{\frac{1}{2}}
        +\delta\sup_{B_{(\sigma-\sigma^\prime)r}(z)}u+\delta2^{-\alpha}{\rm Tail}(u_-;z,\tfrac74(\sigma-\sigma^\prime)r,2(\sigma-\sigma^\prime)r)\nonumber\\
        \leqslant&~\gamma\delta^{-\frac{N}{2\alpha}}\Big(\frac{\sigma}{\sigma-\sigma^\prime}\Big)^{\frac{N}{2}}\left(\fint_{B_{\sigma r}}u_+^2\,dx\right)^{\frac{1}{2}}+\delta\sup_{B_{\sigma r}}u+\delta2^{-\alpha}{\rm Tail}(u_-;z,\tfrac74(\sigma-\sigma^\prime)r,2(\sigma-\sigma^\prime)r)\nonumber\\
        \leqslant&~\gamma\frac{\delta^{-\frac{N}{2\alpha}}}{(\sigma-\sigma^\prime)^{\frac{N}{2}}}(\sup_{B_{\sigma r}}u)^{\frac{2-\varepsilon}{2}}\left(\fint_{B_{\sigma r}}u_+^\varepsilon\,dx\right)^{\frac{1}{2}}+\delta\sup_{B_{\sigma r}}u+\delta\left(\frac{r}{R}\right)^\alpha{\rm Tail}(u_-;\tfrac78R,R),
    \end{align*}
    where $\varepsilon=\varepsilon({\it data})\in(0,1)$ is the constant from Theorem~\ref{weak-harn}.
    By choosing $\delta=\frac{1}{4}$, a standard application of Young's inequality yields
    \begin{equation}\label{est37}
        \sup_{B_{\sigma^\prime r}}u\leqslant\frac{1}{2}\sup_{B_{\sigma r}}u+\frac{\gamma}{(\sigma-\sigma^\prime)^{\frac{N}{\varepsilon}}}\left(\fint_{B_{r}}u_+^\varepsilon \,dx\right)^{\varepsilon}+\gamma\left(\frac{r}{R}\right)^\alpha{\rm Tail}(u_-;\tfrac78R,R).
    \end{equation}
    Next, consider the sequence $\{t_i\}$ defined by
    \[t_0=\frac{1}{2},\quad t_{i+1}-t_{i}=\frac{1}{2}(1-\tau)\tau^i\]
    with $0<\tau<1$ to be fixed. By (\ref{est37}) we get
    \begin{equation*}
    \sup_{B_{t_i r}}u\leqslant\frac{1}{2}\sup_{B_{t_{i+1} r}}u+\frac{\gamma}{[(1-\tau)\tau^i]^{\frac{N}{\varepsilon}}}\left(\fint_{B_{r}}u_+^\varepsilon \,dx\right)^{\varepsilon}+\gamma\left(\frac{r}{R}\right)^\alpha{\rm Tail}(u_-;\tfrac78R,R),
    \end{equation*}
    $i=0,1,\cdots$ By iteration, we obtain
    \begin{equation*}
    \sup_{B_{t_0 r}}u\leqslant\left(\frac{1}{2}\right)^k\sup_{B_{t_{k} r}}u+\bigg[\frac{2^{\frac{N}{\varepsilon}}\gamma}{(1-\tau)^{\frac{N}{\varepsilon}}}\left(\fint_{B_{r}}u_+^\varepsilon \,dx\right)^{\varepsilon}+\gamma\left(\frac{r}{R}\right)^\alpha{\rm Tail}(u_-;\tfrac78R,R)\bigg]\sum_{i=0}^{k-1}2^{-i}\tau^{-\frac{N}{\varepsilon}i}.
    \end{equation*}
    We choose now $\tau$ such that $\frac{1}{2}\tau^{-\frac{N}{\varepsilon}}<1$ and let $k\to\infty$, to get
    \[\sup_{B_{r/2}}u\leqslant\gamma\left(\fint_{B_r}u_+^\varepsilon\,dx\right)^\varepsilon+\gamma\left(\frac{r}{R}\right)^\alpha{\rm Tail}(u_-;\tfrac78R,R),\]
    where the constant $\gamma$ depends also on $\varepsilon$. Combining the above estimate with the weak Harnack inequality established in Theorem~\ref{weak-harn}, we get
    \begin{equation*}
        \sup_{B_{r/2}}u\leqslant\gamma\inf_{B_{2r}}u+\gamma \left(\frac{r}{R}\right)^\alpha{\rm Tail}(u_-;\tfrac78R,R)\leqslant\gamma\inf_{B_{r/2}}u+\gamma \left(\frac{r}{R}\right)^\alpha{\rm Tail}(u_-;\tfrac78R,R).
    \end{equation*}
    Redefine $r/2$ as $r$ to conclude.
\end{proof}

\end{document}